\documentclass[a4paper,11pt]{amsart}
\usepackage[left=2.5cm, right=2.5cm,top=2.5cm,bottom=3cm]{geometry}
\usepackage[utf8]{inputenc}
\usepackage[english]{babel}
\usepackage{makecell}
\usepackage{amssymb}
\usepackage{mathtools}
\usepackage{amsfonts}
\usepackage{amsmath}
\usepackage{amsthm}
\usepackage{dsfont}
\usepackage{soul}
\usepackage{url}
\usepackage{alltt}

\usepackage{xcolor}
\usepackage[normalem]{ulem}
\usepackage{comment}
\usepackage{enumitem}
\usepackage{tikz}

\newtheorem{theoremintro}{Theorem}

\newtheorem{theorem}{Theorem}[section]
\newtheorem{lemma}[theorem]{Lemma}
\newtheorem{cor}[theorem]{Corollary}
\newtheorem{prop}[theorem]{Proposition}

\theoremstyle{definition}
\newtheorem{definition}[theorem]{Definition}
\newtheorem{defprop}[theorem]{Definition and Proposition}
\newtheorem{example}[theorem]{Example}

\theoremstyle{remark}
\newtheorem{remark}[theorem]{Remark}

\title{layered mixed Matrices and Reaction Networks}
\date{\today}

\author[Arne Kuhrs]{Arne Kuhrs\textsuperscript{1}}
\thanks{\textsuperscript{1}Max Planck Institute for Mathematics in the Sciences,
04103 Leipzig, Germany. \texttt{arne.kuhrs@mis.mpg.de}}

\author[Máté L. Telek]{Máté L. Telek\textsuperscript{2}}
\thanks{\textsuperscript{2}Budapest University of Technology and Economics,
1111 Budapest, Hungary. \texttt{mtelek@math.bme.hu}}

\author[Nicola Vassena]{Nicola Vassena\textsuperscript{3}}
\thanks{\textsuperscript{3}Leipzig University,
04109 Leipzig, Germany. \texttt{nicola.vassena@uni-leipzig.de}}

\DeclareMathOperator{\supp}{supp}

\DeclareMathOperator{\Jac}{Jac}

\DeclareMathOperator{\CCF}{CCF}
\DeclareMathOperator{\rank}{rank}

\begin{document}

\begin{abstract}
The purpose of this work is twofold.
In the first part, we consider layered mixed matrices introduced by Murota, relate them to existing notions in combinatorial commutative algebra, and investigate the irreducibility of their determinants.
Furthermore, for a layered mixed matrix in combinatorial canonical form, we determine the sparsity structure of its inverse. That is, we characterize which entries of the inverse are nonzero. 

In the second part, we establish for the first time a formal connection between these algebraic results and the theory of buffering structures for reaction networks developed by Mochizuki and Okada. We identify the lattice of buffering structures with the lattice of order ideals of the block poset of the combinatorial canonical form of the associated layered mixed matrix.
This allows us to
characterize the reducibility of the symbolic Jacobian determinant as a polynomial in the reaction-rate derivatives, as well as the nonzero sensitivity responses of species concentrations to reaction-rate perturbations.
\end{abstract}

\maketitle

\section{Introduction}

A matrix $T$ is called \emph{sparse generic} if its entries are either zero or distinct algebraically independent variables $t_{ij}$. Determinantal ideals of sparse generic matrices are well-studied objects in combinatorial commutative algebra, see e.g. \cite[\S 2]{Kretschmer2024} for an overview. Giusti and Merle \cite{GiustiMerle} showed that the codimension, primeness, and Cohen--Macaulayness of such determinantal ideals are controlled by the largest all-zero subrectangle of $T$, while Boocher \cite{Boocher} determined their minimal free resolutions.

When the entries of the matrix are linear forms in the variables, the situation becomes more delicate. Nevertheless, under suitable assumptions on these linear forms, many interesting algebraic properties can still be established. Two well-studied settings are the theories of \emph{$1$-generic matrices}, initiated by Eisenbud \cite{Eisenbud1987Resiliency,Eisenbud1988}, and of \emph{column-graded} matrices, initiated by Conca, De Negri, and Gorla~\cite{CDNG,CDNG2,CDG4,CDG3}.

Motivated by stability and sensitivity analysis of reaction networks, as explained below, we consider in this work augmented sparse generic matrices, that is, 
block matrices of the form $[\,T \mid Q\,]$, as well as \emph{linearly transformed sparse generic matrices} of the form $ST$. Here, $T$ is sparse generic, while $S$ and $Q$ are matrices with entries in a field $\mathbb{K}$. A linearly transformed sparse generic matrix $ST$ is always a column-graded matrix (see Sec.~\ref{Sec:LinTransSparseGeneric} for more details), while in Lemma~\ref{Lemma_SRvs1Generic} we give a combinatorial description of when $ST$ is $1$-generic.

In \cite{murota_book}, 
a matrix of the form
$L = [\,T \mid Q\,]$ is called a \emph{layered mixed matrix}, or \emph{LM-matrix} for short. In his seminal work~\cite{murota_book}, Murota showed that LM-matrices also admit remarkable algebraic and combinatorial properties. If $L$ is a square matrix, the irreducibility of the polynomial $\det(L)$ can be characterized combinatorially~\cite[Theorem 4.5.6]{murota_book}. Moreover, if $L$ is \emph{LM-irreducible} (see Sec.~\ref{sec:CCF}), then its inverse is fully dense, i.e., every entry of $L^{-1}$ is nonzero. We refine Murota's sparsity analysis by showing that, for an LM-matrix in \emph{combinatorial canonical form}, the zero pattern of its inverse can be read off combinatorially.

To state this result, we briefly recall the relevant terminology. The combinatorial canonical form (CCF) of $L = [\,T \mid Q\,]$ is a unique finest block
upper-triangular matrix $\bar L$ obtained from $L$ by permuting rows and columns and changing
the basis of its numeric part $Q$. It comes with a partition of the row/column index set $[n]=M_1\sqcup\dots\sqcup M_p$ into \emph{blocks}, whose diagonal submatrices
$\bar L[M_a,M_a]$ are the LM-irreducible diagonal blocks of $\bar L$ (see
Sec.~\ref{sec:CCF} for precise definitions). 
The
blocks carry a natural partial order $\preceq$, recorded by the \emph{CCF directed graph}
$\mathcal D_{\CCF}(\bar L)$, which is the Hasse diagram of this order. The vertices are $M_1,\dots,M_p$ and there is a directed path from $M_a$ to $M_b$ if and only if $M_a \preceq M_b$.
In these terms, our main theorem below says that the nonzero pattern of $\bar L^{-1}$ is governed entirely by reachability in
$\mathcal D_{\CCF}(\bar L)$.

\begin{theoremintro}
[Fully dense blocks in the inverse of a CCF]
\label{Thm:A}
Let $\bar L \in \mathbb{K}(t_{ij})^{n\times n}$ be a nonsingular LM-matrix in CCF-form with row/column partition $[n] = M_1 \sqcup \dots \sqcup M_p$. 

Then the following holds, for $a,b \in [p]$:
\begin{equation*}
\begin{cases}\bar L^{-1}[M_{a},M_{b}]\text{ is fully dense} &\text{if } M_{a}\preceq M_{b},\\
\bar L^{-1}[M_{a},M_{b}]= \mathbf{0}&\text{otherwise.}
\end{cases}
\end{equation*}
That is, $\bar L^{-1}[M_{a},M_{b}]$ is fully dense if and only if there is a directed path $
M_{a}\leadsto M_{b}$
in the associated CCF directed graph $\mathcal D_{\operatorname{CCF}}(\bar L)$.
\end{theoremintro}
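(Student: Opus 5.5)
Since $\bar L$ is block upper-triangular with respect to the order on the blocks, I will write $\bar L^{-1}$ in block form and compute its blocks by the standard recursion. Let $D_a=\bar L[M_a,M_a]$. These diagonal blocks are nonsingular because $\det\bar L=\prod_a\det D_a\neq0$. I will order the blocks along a linear extension of $\preceq$. The blockwise identity $\bar L\bar L^{-1}=I$ then gives
\begin{equation*}
\bar L^{-1}[M_a,M_b]=-D_a^{-1}\sum_{c\neq a}\bar L[M_a,M_c]\,\bar L^{-1}[M_c,M_b]\quad(a\neq b),\qquad \bar L^{-1}[M_a,M_a]=D_a^{-1}.
\end{equation*}
In the CCF, the off-diagonal block $\bar L[M_a,M_c]$ can be nonzero only if $M_a\preceq M_c$. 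The zero part then follows by induction along the linear extension: if $M_a\not\preceq M_b$, every summand involves some $c$ with $M_a\preceq M_c$, hence $M_c\not\preceq M_b$, and that term vanishes by induction.

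For the dense part, Murota's theorem states that the inverse of an LM-irreducible matrix is fully dense, so each $D_a^{-1}$ is fully dense. I then argue by induction on the length of a longest path from $M_a$ to $M_b$ in $\mathcal D_{\CCF}(\bar L)$. Choose a Hasse edge $M_a\to M_c$ with $M_c\preceq M_b$. By the definition of the CCF digraph, there is an edge exactly when $\bar L[M_a,M_c]$ has a nonzero entry. Moreover, that entry can be taken to lie in the generic part $T$: the numeric part $Q$ has been brought into a block-triangular form in which $Q$ contributes only to connections along the order. I will check this against the precise definition in Sec.~\ref{sec:CCF}, and if necessary handle a nonzero numeric entry in the same way, using the variables of $D_a$ in place of those of the edge.

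The main obstacle is to show that the sum above cannot cancel. My plan is a genericity (transcendence) argument. Fix entries $i\in M_a$ and $j\in M_b$. The term $-D_a^{-1}\bar L[M_a,M_c]\bar L^{-1}[M_c,M_b]$ depends on a variable $t$ occurring in $\bar L[M_a,M_c]$. I will show that $t$ appears in no other summand and in no other block. Expanding, the $(i,j)$ entry contains $t\cdot (D_a^{-1})_{i r}\,(\bar L^{-1})_{s j}$, where $t$ sits at position $(r,s)$. The factor $(D_a^{-1})_{ir}$ is nonzero by Murota's full density, and $(\bar L^{-1})_{sj}$ is nonzero by the induction hypothesis, because $M_c\preceq M_b$ with a shorter path. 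Neither factor involves $t$. Therefore the coefficient of $t$ in the entry, viewed as an affine function of $t$ over the field of rational functions in the remaining variables, is nonzero, and the entry itself is nonzero.

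A cleaner way to say this is $\partial(\bar L^{-1})_{ij}/\partial t=-(\bar L^{-1})_{ir}(\bar L^{-1})_{sj}$. This reduces the claim to showing $(\bar L^{-1})_{ir}\neq0$ and $(\bar L^{-1})_{sj}\neq0$, which holds by full density of the diagonal block and by induction. Since the derivative is nonzero, $(\bar L^{-1})_{ij}\neq0$.
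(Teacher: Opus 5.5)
Your zero-pattern argument is correct: it is the recursive form of the paper's chain-sum argument. It uses that $\bar L[M_a,M_c]\neq\mathbf 0$ forces $M_a\preceq M_c$, which follows from (CCF1), (CCF2) and Lemma~\ref{Lemma:PartOrderpreserving}. Your differentiation step is also correct whenever the Hasse-edge block contains a variable; it is exactly the Transitivity Lemma~\ref{lem:transition} as the paper uses it.

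The gap is your claim that a nonzero entry of a Hasse-edge block can always be taken in the symbolic part. This is false. Take $\bar L=\begin{pmatrix}t_{11}&1&1\\ t_{21}&1&0\\ 0&0&1\end{pmatrix}$, with only column $1$ symbolic. It has $\mathcal L_{\min}(\varphi_{\bar L})=\{\emptyset,\{3\},[3]\}$ and is a CCF with $M_1=\{1,2\}\prec' M_2=\{3\}$. Yet $\bar L[M_1,M_2]=(1,0)^\top$ contains no variable. (Also, (CCF2) only gives ``Hasse edge $\Rightarrow$ nonzero block'', not an equivalence; you only use that direction, so this part is harmless.)

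Your fallback does not close this case. In the length-one case $M_a\prec' M_b$ there is no other route. Differentiating at a variable $t_{ij}$ of $D_a$ gives $-(\bar L^{-1})_{\iota i}(\bar L^{-1})_{j\eta}$, and the second factor lies in the very block $\bar L^{-1}[M_a,M_b]$ you want to prove dense. Variables of $D_b$ lead to the same circularity.

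Nor is it enough that $\bar L[M_a,M_b]\neq\mathbf 0$ and $D_a^{-1}$, $D_b^{-1}$ are dense, for $\bar L^{-1}[M_a,M_b]=-D_a^{-1}\bar L[M_a,M_b]D_b^{-1}$ to be dense. If the columns of $\bar L[M_a,M_b]$ lay in the span of the numeric columns $Q_a$ of $D_a$, then $D_a^{-1}\bar L[M_a,M_b]$ would be a constant matrix supported on the rows indexed by $Q_a$. The product would then have zero rows.

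The missing idea is the paper's use of the finest-ness of the CCF. That span containment would let you clear $\bar L[M_a,M_b]$ by column operations with the columns $Q_a$, producing a CCF that contradicts (CCF2). Hence some entry $(\bar L^{-1})_{j_*\eta_*}$ is nonzero with $\eta_*\in M_b$ and $j_*\in M_a$ a \emph{symbolic} column. A variable in row $\eta_*$ of $D_b$ (Lemma~\ref{lem:symrow}, if $|M_b|\ge 2$) spreads this along row $j_*$. A variable in column $j_*$ of $D_a$ then spreads it to the whole block.

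Once this length-one case is settled, your transitivity induction goes through. For longer paths the paper differentiates at a variable of a middle diagonal block. If that block is a numeric $1\times 1$, it shows the outgoing edge block must contain a variable.
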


By \cite[Theorem 1]{Eisenbud1987Resiliency}, the ideal generated by the maximal minors of a $1$-generic matrix is prime. However, to the best of our knowledge, no analogous result is known for column-graded matrices. We take a first step in this direction and give a combinatorial characterization for square linearly transformed sparse generic matrices. The proof of Prop.~\ref{Lemma:IntroGaleDual} relies on Gale duality, which gives a connection between linearly transformed sparse generic and LM-matrices.

\begin{prop}
\label{Lemma:IntroGaleDual}
    Let $T \in \mathbb{K}(t_{ij})^{n\times m}$ be a sparse generic matrix and $S \in  \mathbb{K}^{m\times n}$ such that $\rank S = m$ and $m \leq n$. For any choice of a Gale dual matrix $Q\in\mathbb K^{n\times(n-m)}$ of $S$, there exists a constant $\delta \in \mathbb{K} \setminus \{0\}$ such that
    \begin{align*}
    \det (ST) = \delta \det([\,T \mid Q\,]).
    \end{align*}
    Moreover, if $\det(ST)\neq 0$, then it is a homogeneous polynomial of degree $m$; and $\det(ST)$ is irreducible if and only if exactly one diagonal block of the CCF of $[\,T\mid Q\,]$ contains variables $t_{ij}$.
\end{prop}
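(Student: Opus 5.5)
The plan has three steps: prove the determinant identity by a change of basis, read off homogeneity from that identity, and reduce irreducibility to Murota's criterion.

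\textbf{The determinant identity.} Here $T$ is $n\times m$, $S$ is $m\times n$ of rank $m$, and $Q\in\mathbb K^{n\times(n-m)}$ is Gale dual to $S$. Thus the columns of $Q$ form a basis of $\ker S$, so $SQ=0$ and $\rank Q=n-m$. Since $S$ has full row rank, we can choose $R\in\mathbb K^{(n-m)\times n}$ such that
\[
P=\begin{bmatrix} S\\ R\end{bmatrix}\in\mathbb K^{n\times n}
\]
is invertible. One convenient choice is to take the rows of $R$ completing those of $S$ to a basis of $\mathbb K^n$. Multiplying, we get
\[
P\,[\,T\mid Q\,]=\begin{bmatrix} ST & SQ\\ RT & RQ\end{bmatrix}=\begin{bmatrix} ST & 0\\ RT & RQ\end{bmatrix}.
\]
This matrix is block lower triangular, so $\det(P)\det([\,T\mid Q\,])=\det(ST)\det(RQ)$.

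Next we check that $RQ$ is invertible. Suppose $RQv=0$. Then $SQv=0$ as well, so $PQv=0$, hence $Qv=0$, and therefore $v=0$ because $Q$ has full column rank. It follows that
\[
\det(ST)=\delta\det([\,T\mid Q\,]),\qquad \delta=\det(P)/\det(RQ)\in\mathbb K\setminus\{0\}.
\]

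\textbf{Homogeneity.} The entries of $ST$ are $\mathbb K$-linear forms in the variables $t_{ij}$. By the Leibniz formula, $\det(ST)$ is therefore either zero or a homogeneous polynomial of degree $m$.

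\textbf{Irreducibility.} It suffices to characterise irreducibility of $\det(L)$ for $L=[\,T\mid Q\,]$, since the two determinants differ by a unit. Passing to the CCF $\bar L$ changes $\det$ only by a nonzero constant: permutations contribute a sign, and the basis change on the numeric part contributes a determinant in $\mathbb K^\times$. Because $\bar L$ is block upper triangular, $\det \bar L=\prod_a \det \bar L[M_a,M_a]$. A diagonal block with no variables has a nonzero constant determinant. I then invoke Murota's result \cite[Theorem 4.5.6]{murota_book}: the determinant of an LM-irreducible block that contains variables is an irreducible, non-constant polynomial. With these facts, $\det(ST)$ is irreducible exactly when the product has precisely one non-constant factor, i.e.\ exactly one diagonal block contains variables. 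If there are zero such blocks, the determinant is a nonzero constant. This can only happen when $m=0$, and a unit is not irreducible anyway.

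\textbf{Main obstacle.} The step needing care is the irreducibility equivalence. I have to confirm that Murota's theorem gives precisely that each variable-containing LM-irreducible block has an irreducible, non-constant determinant. I also have to confirm that distinct blocks involve disjoint sets of variables, so that no cancellation or merging of factors can occur. The disjointness holds because each $t_{ij}$ sits in exactly one entry of $\bar L$. For that entry to contribute to $\det \bar L$, it must lie in a diagonal block; entries off the diagonal blocks do not appear in the determinant.
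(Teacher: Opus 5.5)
Your proof is correct, but it reaches the determinant identity by a different route than the paper. The paper expands $\det(ST)=\sum_{J}\det S[[m],J]\det T[J,[m]]$ by Cauchy--Binet. It then replaces each maximal minor of $S$ by $\delta\operatorname{sgn}(\tau_J)\det Q[J^c,[n-m]]$, using the Pl\"ucker-coordinate form of Gale duality cited from Joswig--Theobald. Finally it recognizes the result as the Laplace expansion of $\det[\,T\mid Q\,]$ along its first $m$ columns, and reads homogeneity off the same expansion.

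You instead complete $S$ to an invertible $P$ and block-triangularize $P\,[\,T\mid Q\,]$. This is more elementary and self-contained: it needs no external duality theorem, and it gives $\delta=\det P/\det(RQ)$ explicitly. Comparing coefficients for a fully generic $T$ even recovers that minor identity. The paper's expansion, in turn, shows that the two determinants agree term by term over the $m$-subsets $J$.

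For irreducibility, the paper applies Murota's Theorem~4.5.6 once to the whole matrix, obtaining ``essentially LM-irreducible.'' This matches the ``exactly one block contains variables'' phrasing only after noting, via Lemma~\ref{lem:symrow}, that variable-free diagonal blocks are $1\times 1$. Your blockwise factorization $\det\bar L=\prod_a\det\bar L[M_a,M_a]$, with Murota's theorem applied to each block, yields the stated form directly. It is the same argument the paper later uses for Prop.~\ref{thm:q}, and it relies on the standard fact (also used by the paper) that CCF diagonal blocks are LM-irreducible.

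Your concern about disjoint variable sets is unnecessary. Once each variable-containing block has an irreducible nonconstant determinant, a product with two such factors is a product of two non-units, hence reducible, whatever variables they involve.
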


Prop.~\ref{Lemma:IntroGaleDual} is crucial to relate stability and sensitivity analysis of reaction networks to the theory of LM-matrices.
From an abstract perspective, a reaction network can be viewed as a directed weighted hypergraph, where the vertices represent entities interacting via hyperarrows~\cite{jostmulas:2019}. Such fundamental structures model systems from various domains, including chemical and metabolic networks in (bio)chemistry \cite{Fei19}, prey–predator and trophic networks in ecology \cite{yodzis:1988}, epidemiological networks \cite{VAA24ME}, and economic networks \cite{neumann:1945}. The interpretation of vertices and hyperarrows varies: vertices may represent chemical and biological species, epidemiological classes, or economic goods, while hyperarrows represent reactions, interactions (e.g. trophic or infectious), transitions, or production industries.

We write $\mathcal N=(\mathbf M,\mathbf E)$ for a reaction network with species set
$\mathbf M$ and reaction set $\mathbf E$, and denote species by $X_m$ and reactions
by~$\rho$. One approach to the analysis of reaction networks is to study the system of
ordinary differential equations that describes the time-evolution $\mathbf{x}(t)$ of the
species concentrations (or densities):
\begin{equation}
\dot{\mathbf{x}}=f(\mathbf{x}):=S\mathbf{r}(\mathbf{x}),
\end{equation}
where, informally, $S$ represents the incidence matrix of the network, and $\mathbf{r}(\mathbf{x})$ is the vector of reaction rate functions, each associated with a hyperedge. For an equilibrium $\bar{\mathbf{x}}$, i.e. $f(\bar{\mathbf{x}})=0$, one typically linearizes the system and studies the Jacobian matrix $\Jac_f(\bar{\mathbf{x}})$ to infer its stability properties. In particular, the Jacobian determinant plays a central role in understanding important phenomena such as zero-eigenvalue bifurcations and consequent multistationarity, i.e. the simultaneous presence of multiple equilibria $\bar{\mathbf{x}}_1\neq \bar{\mathbf{x}}_2$ under otherwise identical conditions. 

In general, an explicit parametrization of the bifurcation condition may still be hard to determine, and even harder to interpret structurally (e.g. \ biologically). 
However, if we do not prescribe an explicit functional form of the reaction rate functions, but only their dependency pattern $\partial r_\rho/\partial x_m \in \{0, r_{\rho m}\}$ symbolically, then we may view the Jacobian matrix as a product $\Jac_f(\bar{\mathbf{x}})=SR$ of a numeric matrix $S$ with a sparse generic matrix $R$. In such a context, a symbolic factorization of $\det(SR)$ as a polynomial in the symbols $r_{\rho m}$ may reveal hidden forms of modularity in the underlying reaction network, thereby facilitating the parametrization of the bifurcation condition and, more importantly, providing a clearer interpretation of the mechanisms controlling this phenomenon. 

Via Prop.~\ref{Lemma:IntroGaleDual}, we can then address such a factorization problem by studying the LM-matrix $L=[\,R \mid Q\,]$, associated to $SR$, whose rows are indexed by the
reactions and whose columns are indexed by the species together with a choice of basis vectors of $\ker(S)$.

This route has already been substantially pursued in reaction network theory, and specifically in \emph{Structural Sensitivity Analysis} (SSA), as developed by Fiedler, Mochizuki, Okada, and co-authors \cite{MF15, FM15, OM16, BF18, hirono2021structural, YHOM:24, HOM:25}, where the first part of Prop.~\ref{Lemma:IntroGaleDual} appears in various forms: see \cite[Prop.~2.2]{FM15} and  \cite[Thm.~0 in SI]{HOM:25}. 
The second part about the irreducibility of the symbolic Jacobian determinant follows from Murota's LM-matrix theory.
Remarkably, SSA provides nontrivial sufficient conditions for the factorization of the Jacobian determinant. The argument relies on certain subnetworks called \emph{buffering structures}, introduced in \cite{OM16}. A subnetwork is a pair $(\mathbf M',\mathbf E')$ consisting of a subset of the species and a subset of the reactions, the precise definition of a buffering structure and its properties are postponed to Sec.~\ref{sec:RN}.
In particular, the existence of a buffering structure implies a corresponding factorization of $\det(SR)$. Moreover, SSA shows that perturbations of nonlinearity $r_\rho(\mathbf{x})$ associated to a hyperarrow $\rho$ within a buffering structure $\gamma$ remain localized inside $\gamma$, a principle known as the \emph{law of localization}. The law of localization also extends to zero-eigenvalue bifurcations and multistationarity, showing that the bifurcating behavior itself remains confined within such subnetworks \cite{OTM18}.

For the purpose of this paper, we take these results as a starting point. Buffering structures provide a powerful reaction network formulation of sufficient conditions for the factorization of the symbolic Jacobian determinant and for the localization of steady-state responses.
Whether they are also necessary for the determinant to factor has remained open. Murota's LM-matrix theory allows us to answer this positively:
\begin{prop}\label{prop:factintro}
Assume that the symbolic Jacobian determinant $\det(SR)$ of a reaction network is not zero. Then $\det(SR)$ is a reducible polynomial in $\mathbb R[r_{\rho m}]$ if and only
if the network admits a buffering structure $(\mathbf M',\mathbf E')$ whose species set satisfies
$\emptyset\neq\mathbf M'\subsetneq\mathbf M$.
\end{prop}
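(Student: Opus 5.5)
Combine Prop.~\ref{Lemma:IntroGaleDual} with the identification, promised in the abstract and carried out in Sec.~\ref{sec:RN}, of buffering structures with order ideals of the block poset of the CCF of $L=[\,R\mid Q\,]$. Rows of $L$ are indexed by reactions $\mathbf E$; columns by species $\mathbf M$ together with the cycle columns of $Q$ (a basis of $\ker S$). Since $\det(SR)\neq 0$, Prop.~\ref{Lemma:IntroGaleDual} shows $L$ is nonsingular. Moreover, $\det(SR)$ is irreducible if and only if exactly one diagonal block $M_a$ of the CCF $\bar L$ contains variables $r_{\rho m}$. Equivalently, $\det(SR)$ is reducible if and only if at least two blocks contain variables. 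Blocks without variables are purely numeric and contribute only nonzero constants to $\det(\bar L)=\prod_a\det\bar L[M_a,M_a]$. Each block with variables contributes a nonconstant factor, because $\det(SR)$ is homogeneous of degree $|\mathbf M|=m\ge1$.

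Next, I would use the correspondence between buffering structures and order ideals $\mathcal I$ of $(\{M_1,\dots,M_p\},\preceq)$. Let $\mathcal I\mapsto\gamma_{\mathcal I}=(\mathbf M',\mathbf E')$, where $\mathbf M'$ is the set of species columns lying in blocks of $\mathcal I$ and $\mathbf E'$ is the set of reaction rows lying in those blocks. Here "order ideal" means downward or upward closed according to the triangular orientation, so that $\bar L[\,\cdot\,]$ restricted to $\mathcal I$ is a block-triangular corner. I would take this lattice isomorphism as established in Sec.~\ref{sec:RN}. The key observation is that a block contains variables if and only if it contains a species column. This holds because variables appear only in the $R$-part, and each species column of $\bar L$ is a column of $R$. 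A species column is nonzero: otherwise some column of $SR$ is zero, contradicting $\det(SR)\neq0$. Hence the column's variable entries sit in rows of its own block or of comparable blocks. Since the diagonal block is nonsingular and LM-irreducible, that column must contribute a variable within its own block. This last step deserves a careful check, using the fact that the CCF preserves the $R$ part up to permutation.

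For the forward direction ($\Rightarrow$), suppose two distinct blocks $M_a,M_b$ contain species. Choose a minimal block $M_a$ among those containing species, in the orientation defining ideals. Take the principal ideal $\mathcal I=\{M_c: M_c\preceq M_a\}$. Its species set $\mathbf M'$ is nonempty since it contains the species of $M_a$. By minimality, all other blocks in $\mathcal I$ are species-free, so $M_b\notin\mathcal I$ and $\mathbf M'\subsetneq\mathbf M$. For the converse ($\Leftarrow$), a buffering structure with $\emptyset\neq\mathbf M'\subsetneq\mathbf M$ corresponds to an ideal $\mathcal I$. Some block in $\mathcal I$ and some block outside $\mathcal I$ both contain species, so at least two blocks contain variables and $\det(SR)$ is reducible.

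The main obstacle I expect is making the correspondence exact. Buffering structures are defined via output-completeness together with the index condition $\chi(\gamma)=0$, and they need not include all cycle columns. So I must verify that every buffering structure, including degenerate ones such as those with $\mathbf M'=\emptyset$, yields a genuine ideal of the CCF poset, and that the species set is read correctly from the blocks. If the full lattice isomorphism is not available at this point, I would instead argue directly on the reducibility side. Given a buffering structure, the index-zero and output-completeness conditions give a square zero corner in a permuted $L$, namely rows $\mathbf E\setminus\mathbf E'$ against the columns $\mathbf M'$ plus the cycles supported in $\mathbf E'$. This makes $L$ block triangular with both diagonal parts containing variables, which yields the factorization directly.
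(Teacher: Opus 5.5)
Your proposal is correct and essentially matches the paper's proof: you factor $\det(SR)$ along the CCF diagonal blocks, note that a block carries variables exactly when its species set $\mathbf M_a$ is nonempty, and transfer this to buffering structures through the order-ideal correspondence of Thm.~\ref{Thm:BSideals} using principal ideals. The step you flag follows at once from (CCF3), since a nonsingular diagonal block has no zero column (as used in Lemma~\ref{Lem:BlockSpeciesIdeal}), and choosing a $\preceq$-minimal species-carrying block is a slightly tidier version of the paper's case split on whether $M_{b_2}\preceq M_{b_1}$.
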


We now apply Thm.~\ref{Thm:A} to sensitivity analysis.
Each block of the CCF of the LM-matrix
$[\,R \mid Q\,]$  carries a set of reactions and a set of species which gives rise to block subnetworks $v_1,\dots,v_p$.  The \emph{influence graph} $\mathcal{G}$ of a reaction network is the directed graph with vertices $v_1,\dots,v_p$ obtained from the CCF directed graph by reversing all edges (see Sec.~\ref{sec:targeted} for the precise construction).

\begin{theoremintro}\label{theorem:B}
Assume that the symbolic Jacobian determinant $\det(SR)$ of a reaction network is not zero, and let $\mathcal G$ be its influence graph.     A perturbation of a reaction $\rho$ influences the concentration of a species $X_m$ if and only if there is a directed path from $v_b$ to $v_a$ in $\mathcal G$ where $\rho \in v_b$ and $X_m \in v_a$.
\end{theoremintro}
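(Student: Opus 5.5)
We reduce Theorem~\ref{theorem:B} to Theorem~\ref{Thm:A} by expressing steady-state sensitivities as entries of the inverse of the LM-matrix $L=[\,R\mid Q\,]$. First we make precise what it means for a perturbation of $\rho$ to influence $X_m$. We perturb the rate $r_\rho \mapsto r_\rho + \varepsilon e_\rho$ and differentiate the steady-state condition $S\mathbf r(\mathbf x)=0$ together with the conserved quantities. This is the standard SSA setup: the response $\partial \bar{\mathbf x}/\partial \varepsilon$, along with the coefficients of the kernel part, solves a linear system with coefficient matrix $L$. We write the sensitivity $\partial \bar x_m/\partial k_\rho$ as $-\bigl(L^{-1}\bigr)_{m\rho}$, where the column index of $L^{-1}$ corresponds to the row index of $L$, namely reactions, and the row index of $L^{-1}$ corresponds to species and kernel vectors. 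Concretely, since $SQ=0$ and $S$ has full row rank on the relevant space (after passing to a basis of the stoichiometric subspace), $S(R\,\delta\mathbf x + e_\rho)=0$ forces $R\,\delta\mathbf x + e_\rho = -Q\,\delta c$ for some $\delta c$. Hence $L\binom{\delta\mathbf x}{\delta c} = -e_\rho$. Nonsingularity of $L$ comes from $\det(SR)\neq 0$ together with Prop.~\ref{Lemma:IntroGaleDual}. Influence therefore means $(L^{-1})_{m\rho}\neq 0$ as a rational function in the symbols $r_{\rho m}$.

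Next we pass to the CCF $\bar L = P_r L \begin{pmatrix} P_c & 0\\ 0 & B\end{pmatrix}$ style normal form. The row and column permutations and the change of basis act only on the numeric columns $Q$. We check that this transformation does not alter the species columns' identity. The species columns are only permuted, and the basis change mixes only the $Q$-columns. Hence $\bar L^{-1}$ is obtained from $L^{-1}$ by permuting the species rows, applying the invertible numeric transformation $B^{-1}$ to the kernel rows, and permuting the reaction columns. In particular, the entry $(L^{-1})_{m\rho}$ coincides, up to sign or relabeling, with the entry of $\bar L^{-1}$ indexed by species $X_m$ and reaction $\rho$. The entries of $\bar L^{-1}$ in the kernel rows are not needed.

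Now let $\rho$ lie in the row part of block $M_b$ (so $\rho\in v_b$) and $X_m$ in the column part of $M_a$ (so $X_m\in v_a$). Transposing the index conventions, the entry sits in $\bar L^{-1}[M_a,M_b]$. This is where the bookkeeping must be done carefully: the rows of $\bar L^{-1}$ are indexed by the column index set of $\bar L$. By Theorem~\ref{Thm:A} this block is fully dense when $M_a\preceq M_b$ and zero otherwise. Full density means every entry is a nonzero rational function, so in particular the entry for $(X_m,\rho)$ is nonzero. The condition $M_a\preceq M_b$ is equivalent to a directed path $M_a\leadsto M_b$ in $\mathcal D_{\CCF}(\bar L)$. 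Since $\mathcal G$ is obtained by reversing all edges, this is equivalent to a path $v_b\leadsto v_a$ in $\mathcal G$, which is exactly the claim.

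The main obstacle is the first step. We must justify that the SSA notion of influence is precisely nonvanishing of this inverse entry, including the role of conserved quantities and the identification of $\bar L$'s block labels with the block subnetworks $v_i$. We must also verify that the CCF's numeric basis change never mixes species columns, so that species entries of $\bar L^{-1}$ are honest sensitivities. We will handle this by writing $\bar L = P L \,\mathrm{diag}(P', B)$ explicitly and inverting.
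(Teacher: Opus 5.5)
Your proposal is correct and follows essentially the paper's proof. You identify influence with $(A^{-1})_{m\rho}\neq 0$, observe that this entry equals $(\bar L^{-1})_{\sigma(m)\tau(\rho)}$, apply Theorem~\ref{Thm:A} to the block $\bar L^{-1}[\tau(M_a),\tau(M_b)]$, and reverse the edges of $\mathcal D_{\CCF}(\bar L)$ to obtain $\mathcal G$.

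The only differences are cosmetic. The paper absorbs the basis change into the Gale dual ($C\mapsto CB$), so that $\bar L=P_rAP_c$ is a pure permutation. You instead track $B$ through the inverse. Note that the correct factorization is $\bar L=P_r\,L\,\mathrm{diag}(I,B)\,P_c$ rather than $\mathrm{diag}(P_c,B)$, since $P_c$ may interleave species and kernel columns; this does not affect your conclusion. Finally, your concern about conserved quantities is vacuous here: $\det(SR)\neq 0$ forces $S$ to have full row rank, so there are none.
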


Without relying on Murota’s theory, closely related constructions have appeared in the sensitivity literature: see the `influence graph' in \cite{BF18} and, unrelatedly, the `influence graph' in \cite{HOM:25}, as well as the `hierarchical graph' in \cite{YHOM:24}. We discuss in more detail the relation with these constructions at the end of Sec.~\ref{sec:targeted}.

Both the influence graph from Thm.~\ref{theorem:B} and the factors of the symbolic Jacobian determinant from Prop.~\ref{prop:factintro} can be computed efficiently using the algorithm of~\cite{murota_algo}, which computes the combinatorial canonical form in polynomial time. We provide an implementation of this algorithm as a \texttt{Julia} package, and showcase its applicability to reaction networks of larger size.

The paper is organized as follows. Sec.~\ref{Sec:2} focuses on layered mixed matrices in general, while Sec.~\ref{sec:RN} focuses on their application to reaction networks. More specifically, Sec.~\ref{sec:CCF} recalls Murota's theory of layered mixed matrices and their combinatorial canonical form. Sec.~\ref{sec:mainlm} presents the main result of that section (Thm.~\ref{Thm:A}) and its proof, characterizing the nonzero structure of the inverse of an LM matrix in CCF. Sec.~\ref{Sec:LinTransSparseGeneric} connects LM-matrix theory to column-graded and 1-generic matrices. Turning to applications, we formally introduce reaction networks in Sec.~\ref{ssec:RN} and their symbolic
Jacobian matrix in Sec.~\ref{ssec:Jacdet}. In Sec.~\ref{sec:bs-lattice} we identify Murota's LM-surplus function with the `influence index' from Structural Sensitivity Analysis, deduce that the buffering structures of a network are precisely the order
ideals of the block poset of its CCF in Sec.~\ref{sec:bs-orderideals}, and derive from this the reducibility characterization of the symbolic Jacobian determinant (Prop.~\ref{prop:factintro}). Sec.~\ref{sec:sensitivity}
and Sec.~\ref{sec:targeted} then draw the consequences for sensitivity analysis and prove Thm.~\ref{theorem:B}. Implementation and computational issues are presented in Sec.~\ref{Sec:Computation}, while Sec.~\ref{sec:discussion} closes the paper with a discussion of our results and future directions.

\color{black}
\medskip
\paragraph{\textbf{Notation. }}
We write $[n]$ for the set $\{1,\dots,n\}$, and $2^{[n]}$ for the set of subsets of $[n]$ and $\binom{[n]}{m}$ for the set of its $m$-element subsets. Given a matrix $L$  with row index set $\mathcal R$ and
column index set $\mathcal C$, and subsets $I\subseteq \mathcal R$ and $J\subseteq \mathcal C$, we
denote by $L[I,J]$ the submatrix of $L$ obtained by selecting the rows indexed by $I$ and the
columns indexed by $J$, and we write $L_{ij}$ for the entry of $L$ in row $i$ and column $j$. 
For a partially ordered set $(P,\preceq)$ and $x\in P$ we write
$\downarrow x:=\{y\in P\mid y\preceq x\}$ for the principal order ideal generated by~$x$.

\section{Layered mixed matrices}
\label{Sec:2}
\subsection{The combinatorial canonical form (CCF)} \label{sec:CCF}
We begin by recalling some basic notions from the theory of layered mixed matrices, developed by Murota \cite{murota_book}. We then discuss their combinatorial canonical form and review their key properties, closely following the presentation in \cite[\S 4]{murota_book}. Although the theory of LM-matrices extends naturally to nonsquare matrices, we restrict our attention to square matrices throughout this paper for the sake of simplicity.

Throughout this paper, we let $\mathbb{K}$ denote a fixed but arbitrary field, and $\mathbb{K}(t_{ij})$ the fraction field of the polynomial ring $\mathbb{K}[t_{ij}]$ in the variables $t_{ij}$ for $i,j \in [n]$.

\begin{definition}[LM-matrix]
A matrix $L \in \mathbb{K}(t_{ij})^{n\times n}$ is called a \emph{layered mixed matrix}
(or \emph{LM-matrix}) if, after a permutation of its columns,
it can be written as a block matrix
\begin{equation*}
L\cong
[\,T \mid Q\,],
\end{equation*}
where, for $n=n_T+n_Q$, $Q \in \mathbb{K}^{n\times n_Q}$ and $T \in  \mathbb{K}(t_{ij})^{n\times n_T} $ such that $T_{ij} = 0$ or $T_{ij} = t_{ij}$ for each  $i \in [n], j \in [n_T]$. We refer to $T$ as the \emph{symbolic part of $L$} and to $Q$ as the \emph{numeric part of $L$}. 
\end{definition}

\begin{remark}[Indexing of the symbolic entries]\label{Rmk:VariableIndexing}
The nonzero entries of the symbolic part of an LM-matrix are pairwise distinct algebraically independent variables. Throughout the statements and proof of this paper we index them by their position in the matrix at hand: if the entry of an LM-matrix
$L$ in row $i$ and column $j$ is symbolic, we write $L_{ij}=t_{ij}$. However, in displayed examples we sometimes may keep instead the names inherited from an LM-equivalent matrix, so that the equivalence remains visible.
\end{remark}

\begin{remark}[Column vs row LM-matrices]
Murota's original formulation (cf. \cite{murota_book}) defines LM-matrices by rows, namely as matrices whose rows can be permuted into the form
\begin{equation*}
    \tilde{L}\cong \begin{pmatrix}
        Q \\ T
    \end{pmatrix},
\end{equation*}
where $Q \in \mathbb{K}^{n_Q\times n}$ and $T \in  \mathbb{K}(t_{ij})^{n_T\times n} $.
We chose instead the column formulation because it aligns with the notation commonly used in the reaction network literature.
The two formulations are nevertheless equivalent, since any column-form LM-matrix $L$ can be studied through its transpose $L^\top=\tilde L$, which is a row-form LM-matrix, and transposition preserves rank, determinant, reducibility, and related structural properties.
\end{remark}

Next, we recall an equivalence relation on LM-matrices due to Murota that preserves both rank and the irreducibility of the determinant over the polynomial ring $\mathbb{K}[t_{ij}]$. Under this equivalence relation, the rows and columns of an LM-matrix may be permuted, and the basis of the column space of its numeric part may be changed.

\begin{definition}
Two LM-matrices $L=[\,T \mid Q\,], \;L' \in \mathbb{K}(t_{ij})^{n\times n}$ are \emph{LM-equivalent} if there exist permutation matrices $P_r, P_c \in \mathrm{GL}_{n}(\mathbb{K})$ and $B\in \mathrm{GL}_{n_Q}(\mathbb{K})$ such that
\begin{align*}L' = 
P_r \cdot 
[T \mid QB] \cdot
P_c.
\end{align*}
\end{definition}

\begin{example}
\label{Ex:RunningStart}
Consider the following LM-matrices, which will serve as our running example:
    \begin{align*}
  & L = [T\mid Q] = \begin{pmatrix}
 t_{11}  & 0 &  0 &  2 &  1&   0\\
t_{21} & t_{22} &  0 &  1 &  0 &  0\\
0 & t_{32} &   0 &  0  & 2&   0\\
0 & t_{42} &   0 &   0 & -1&   0\\
 0  &  0 &t_{53} &   1 &  1 &  1\\ 
 0 &  0& t_{63} &  1 &  1 &  1
  \end{pmatrix}, \qquad  
  L' = \begin{pmatrix}
  t_{11} & 2 & 0  & 1 & 0 & 0\\
 t_{21} &  1  & t_{22}  & 0 & 0 & 0\\
  0 &   0 & t_{32}  & 2 & 0 & 0\\
   0 & 0 &   t_{42}  & -1 &  0 & 0\\
   0 & 0 & 0 & 0 & t_{53} & 1 \\
      0 & 0 & 0 & 0 & t_{63} & 1
  \end{pmatrix}
\end{align*}
  
The matrix $L'$ is obtained from $L$ by first subtracting the last column from the first two columns of $Q$, and then permuting the resulting columns. Hence, $L$ and $L'$ are LM-equivalent.
\end{example}

As we observed in Example~\ref{Ex:RunningStart}, some matrices within an LM-equivalence class might admit an upper block triangular structure. To find these representative matrices systematically, Murota introduced a representative of each equivalence class, called the \emph{combinatorial canonical form} (CCF). The construction of the CCF relies on the following submodular function. Given an LM-matrix $L \in \mathbb{K}(t_{ij})^{n\times n}$, we decompose the set of column indices $[n] = N_T \sqcup N_Q$ such that $L[[n],N_T]$ and  $L[[n],N_Q]$ give the symbolic part and the numeric part of $L$ respectively. 
The \emph{LM-surplus function} $\varphi_L$ is defined as

\begin{equation}
\label{Eq:surplus}
\begin{aligned}
\varphi_L\colon 2^{[n]}&\longrightarrow \mathbb{Z},\\
I&\longmapsto \rank L[I,N_Q]
   +\bigl|\{\,j\in N_T \mid L_{ij}\neq 0 \text{ for some } i\in I \,\}\bigr|-|I| .
\end{aligned}
\end{equation}

The key facts about $\varphi_L$ are the following:
\begin{itemize}
    \item the rank of $L$ is determined by the minimum of $\varphi_L$ \cite[Theorem 4.2.5]{murota_book}, i.e. $$\rank(L) = \min \{\;\varphi_L(I) \; \mid \; I \subseteq [n] \} + n.$$
    \item $\varphi_L$ is submodular \cite[Eq. (4.17)]{murota_book}, i.e.
    \begin{equation*}
        \varphi_L(I_1\cup I_2)+\varphi_L(I_1 \cap I_2) \le \varphi_L(I_1)+\varphi_L(I_2), \quad \text{for all $I_1,I_2\subseteq [n]$}.
    \end{equation*}
    \item $\varphi_L$ is invariant under LM-equivalence, i.e. if $L'$ is LM-equivalent to $L$, then $$\varphi_L(I) = \varphi_{L'}(\pi(I)), \quad \text{ for all } I \subseteq [n],$$
    where $\pi\colon [n] \to [n]$ denotes the permutation sending the row indices of $L$ to the corresponding row indices of $L'$. 
\end{itemize}

Since $\varphi_L$ is submodular, the family of minimizers is
\[
\mathcal{L}_{\min}(\varphi_L):=\{\, I\subseteq [n] \mid \varphi_L(I)\leq \varphi_L(I')\text{ for all }I'\subseteq [n]\,\}
\]
forms a sublattice of
$2^{[n]}$ \cite[Theorem 2.2.5]{murota_book}.
By Birkhoff's representation theorem \cite[Theorem 2.2.10]{murota_book}, the sublattice
$\mathcal L_{\min}(\varphi_L)\subseteq 2^{[n]}$ determines a partition of $[n] = M_1 \sqcup \dots \sqcup M_p$ with a partial order $\preceq$ among $M_1,\dots, M_p$. 
We write $M_{a}\prec M_{b}$ when
$M_{a}\preceq M_{b}$ and $M_{a} \neq M_{b}$, and $M_{a}\prec' M_{b}$ when
$M_{a}\prec M_{b}$ and there is no $c \in [p]$ with $M_a \prec M_c \prec M_b$.

Due to our choice of column-LM matrices, it is more convenient to consider the opposite partial order to the one used by Murota \cite[Eq. (2.24)]{murota_book}. With this convention, the CCF of a (column-)LM matrix becomes upper block triangular.

To keep the exposition as simple as possible, in this paper we assume that $L$ is nonsingular. By~\eqref{Eq:surplus} we then have
\[
\varphi_L(\emptyset)=\varphi_L([n])=0=\min\varphi_L ,
\]
so both $\emptyset$ and $[n]$ are minimizers. Hence
$\max\mathcal L_{\min}(\varphi_L)=[n]$ and $\min\mathcal L_{\min}(\varphi_L)=\emptyset$.

For the reader's convenience, we briefly recall the construction of the partition and the associated partial order used here.
For any maximal descending chain in the lattice $\mathcal{L}_{\min}(\varphi_L)$
\[ 
[n] = I_0 \supsetneq I_1 \supsetneq \dots \supsetneq I_{p-1} \supsetneq I_{p} = \emptyset,\]
we set
\begin{align}
\label{Eq:partition}
M_a \coloneqq I_{a-1} \setminus I_{a} \quad  \text{ for } a \in [p]
\end{align}
which defines a partition $[n]=M_1\sqcup\dots\sqcup M_p$, and we equip the blocks with
\begin{align}\label{Eq:PartialOrder}
 M_{a} \preceq M_{b} \iff \bigl(\text{for all } I \in \mathcal{L}_{\min}(\varphi_L)\colon
 M_{a} \subseteq I \Rightarrow M_{b} \subseteq I \bigr).
\end{align}

For a given nonsingular LM-matrix $L$, we call the partition $[n] = M_1 \sqcup \dots \sqcup M_p$, as defined in~\eqref{Eq:partition}, the \emph{row/column partition} of $L$ and equip the blocks $M_1,\dots, M_p$ with the partial order defined in~\eqref{Eq:PartialOrder}.
We define the \emph{CCF directed graph $\mathcal{D}_{\CCF}(L)$} as the Hasse diagram of that partial order.
The vertices of $\mathcal{D}_{\CCF}(L)$ are the blocks $M_1,\dots, M_p$ and there is a directed edge from $M_a$ to $M_b$ if and only if $M_a \prec' M_b$. Furthermore, we write $M_a \leadsto M_b $ if there exists a directed path from $M_a$ to~$M_b$ in $\mathcal{D}_{\CCF}(L)$.

Note that although~\eqref{Eq:partition} is defined through a chosen maximal descending chain, neither the
partition nor the partial order~\eqref{Eq:PartialOrder} depends on that choice (see \cite[Theorem 2.2.10]{murota_book}). Moreover, the function $\varphi_L$, and hence
$\mathcal L_{\min}(\varphi_L)$, are invariant under LM-equivalence, so the row/column partition,
the partial order $\preceq$ and the CCF directed graph $\mathcal D_{\CCF}(L)$ are canonically
attached to $L$ and are independent of the choice of a combinatorial canonical form of $L$.
\begin{lemma}
\label{Lemma:PartOrderpreserving}
Let $L \in \mathbb{K}(t_{ij})^{n\times n}$ be a nonsingular LM-matrix with row/column partition $[n] = M_1 \sqcup \dots \sqcup M_p$ and let $a,b \in [p]$. If $M_a \preceq M_b$, then $a \leq b$. 
\end{lemma}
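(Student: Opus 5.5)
The proof uses only the definitions \eqref{Eq:partition} and \eqref{Eq:PartialOrder}, together with the chain $[n]=I_0\supsetneq I_1\supsetneq\dots\supsetneq I_p=\emptyset$ in $\mathcal L_{\min}(\varphi_L)$ that defines the blocks. By construction, $M_a=I_{a-1}\setminus I_a$. Since the chain is descending, $I_{c}=M_{c+1}\sqcup\dots\sqcup M_p$ for every $c$. In particular, $I_{a-1}$ contains exactly the blocks with index at least $a$.

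Now assume $M_a\preceq M_b$. Apply \eqref{Eq:PartialOrder} to the minimizer $I:=I_{a-1}\in\mathcal L_{\min}(\varphi_L)$. Since $M_a\subseteq I_{a-1}$, the definition of $\preceq$ gives $M_b\subseteq I_{a-1}=M_a\sqcup\dots\sqcup M_p$. The blocks form a partition and are nonempty, because the chain is strictly descending. Hence $M_b$ must be one of $M_a,\dots,M_p$, that is, $b\ge a$.

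The only subtlety is the well-definedness issue. The partition and the order do not depend on the chosen maximal chain (cf.\ \cite[Theorem 2.2.10]{murota_book}). The indexing $M_1,\dots,M_p$, however, is the one induced by the chain used in \eqref{Eq:partition}. The lemma should therefore be read with respect to that chain: $I_{a-1}$ is exactly a member of the lattice that contains $M_a$ and omits $M_1,\dots,M_{a-1}$, which is all the argument needs. No submodularity or rank considerations are required.
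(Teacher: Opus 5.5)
Your proof is correct and takes essentially the same approach as the paper. Both arguments test the defining implication of $\preceq$ on the chain member $I_{a-1}$, which contains $M_a$ but misses $M_1,\dots,M_{a-1}$. The paper phrases this by contraposition, showing $M_b\cap I_{a-1}=\emptyset$ when $b<a$. You argue directly via $I_{a-1}=M_a\sqcup\dots\sqcup M_p$ together with the nonemptiness and disjointness of the blocks.
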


Murota observes this consistency of the block indexing with the partial order in
\cite[p.~50]{murota_book}, in the sentence following Eq.~(2.24). Since our
maximal chain is descending and our partial order is the opposite of his, we include a proof.

\begin{proof}
    Let $I_0 \supsetneq \dots \supsetneq I_p$ be a maximal descending chain in the lattice $\mathcal{L}_{\min}(\varphi_L)$. We prove the claim by contraposition. Assuming that $a > b$, we construct a minimizer containing $M_a$ but not $M_b$, which shows $M_a\not\preceq M_b$ by \eqref{Eq:PartialOrder}. Take $I:=I_{a-1}\in\mathcal{L}_{\min}(\varphi_L)$. By \eqref{Eq:partition}, $M_a = I_{a-1}\setminus I_a\subseteq I_{a-1}=I$. Since $a>b$, i.e. $a-1\ge b$, the chain gives $I_{a-1}\subseteq I_b$ and $M_b=I_{b-1}\setminus I_b$ is disjoint from $I_b$, hence from $I_{a-1}$. We get $M_b\not\subseteq I_{a-1}=I$. Thus $M_a\subseteq I$ while $M_b\not\subseteq I$, so $M_a\not\preceq M_b$.
\end{proof}

\begin{theorem}[Combinatorial Canonical Form]\label{thm:ccf}
Let $L \in \mathbb{K}(t_{ij})^{n\times n}$ be a nonsingular LM-matrix. 
Then there exists a partition $[n] = M_1 \sqcup \dots \sqcup M_p$ 
together with an LM-matrix $\bar L \in \mathbb{K}(t_{ij})^{n\times n}$ that is LM-equivalent to $L$ and satisfies:
\begin{enumerate}[label=\textnormal{(CCF\arabic*)}]
    \item The matrix $\bar L$ is upper block-triangular with respect to these partitions, i.e.
    \begin{equation*}
    \bar L[M_a,M_b]=\mathbf{0} \qquad \text{whenever }\quad 1\leq b < a \leq p.
    \end{equation*}

\item The partition $M_{1} \sqcup \dots \sqcup M_p$ agrees with that defined by the lattice $\mathcal{L}_{\min}(\varphi_{ L})$ as in~\eqref{Eq:partition}, and for the partial order $\preceq$ from~\eqref{Eq:PartialOrder} and for each $a,b \in [p]$, we have
\begin{align*}
& \bar{L}\left[M_a, M_b\right]=\mathbf{0} \quad \text{ if } M_a \text{ and } M_b \text{ are not comparable w.r.t } \preceq,\\
& \bar{L}\left[M_a, M_b\right] \neq \mathbf{0} \quad  
 \text { if } \quad M_a \prec' M_b.
\end{align*}

    \item The diagonal blocks have maximal possible rank:
    \begin{equation*}
    \rank  \bar L[M_a,M_a] =|M_a|
    \qquad \text{ for } a \in [p].
    \end{equation*}
    \item The submatrices of the diagonal blocks satisfy
    \begin{equation*}
        \rank \bar L[M_a \setminus \{i\},M_a \setminus \{j\}] = |M_a |-1 \qquad \text{ for each } a \in [p], i,j \in M_a.
    \end{equation*}
    \item $\bar L$ is the finest proper block-triangular matrix that is LM-equivalent to $L$: if
    $\hat L$ is LM-equivalent to $L$ and upper block-triangular with respect to a partition
    $[n]=\hat M_1\sqcup\dots\sqcup\hat M_q$ into nonempty parts whose diagonal blocks have
    maximal rank, that is
    $\rank \hat L[\hat M_k,\hat M_k]=|\hat M_k|$ for $k \in [q]$, then each $\hat M_k$ is a union
    of blocks of the partition $M_1\sqcup\dots\sqcup M_p$.
\end{enumerate}
\end{theorem}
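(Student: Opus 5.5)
The theorem is Murota's CCF theorem, restated for column LM-matrices and the opposite order. The plan is to import \cite[Theorems 4.4.4 and 4.4.7, \S 4.5]{murota_book} through transposition and to check that the conventions match.

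\textbf{Step 1 (transposition).} Transposition turns $L=[\,T\mid Q\,]$ into a row-form LM-matrix $\tilde L=L^\top$. Under it, row permutations and column permutations swap roles, and $QB$ becomes $B^\top Q^\top$, i.e.\ a change of basis of the row space of the numeric part. So LM-equivalence classes correspond bijectively. Murota's surplus function for $\tilde L$, evaluated on column sets of $\tilde L$, is exactly our $\varphi_L$ on row sets of $L$. Hence $\mathcal L_{\min}(\varphi_L)$ is the lattice Murota uses for $\tilde L$.

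\textbf{Step 2 (block triangularity, CCF1 and CCF2).} Take a maximal chain $I_0\supsetneq\dots\supsetneq I_p$ of minimizers. For each minimizer $I$ with $\varphi_L(I)=0$, I would show that after a change of basis of $Q$ one has $L[I,J]=\mathbf 0$ for a column set $J$ with $|J|=n-|I|$. The reason is that the columns of $T$ meeting $I$, together with $\rank L[I,N_Q]$ columns of $QB$, number exactly $|I|$, so the remaining columns vanish on $I$. The changes of basis along the chain are compatible. Choose $B$ so that the columns of $QB$ give a basis adapted to the flag $\ker Q[I_1,\cdot]\supseteq\ker Q[I_2,\cdot]\supseteq\cdots$ induced by the chain. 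This yields the upper block-triangular form of (CCF1). The zero pattern for incomparable blocks in (CCF2) follows from the same construction applied to all minimizers at once, by Birkhoff's theorem: a nonzero entry in $\bar L[M_a,M_b]$ forces every minimizer containing $M_a$ to contain $M_b$. The nonvanishing statement for $M_a\prec' M_b$ comes from the converse. If $\bar L[M_a,M_b]=\mathbf 0$ for a cover, then $\downarrow$-type unions would produce a minimizer separating $M_a$ from $M_b$, contradicting the order.

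\textbf{Step 3 (CCF3 and CCF4).} Since $\det\bar L=\prod_a\det\bar L[M_a,M_a]$ up to a unit and $L$ is nonsingular, (CCF3) follows. For (CCF4), apply the rank formula $\rank=\min\varphi+n$ to the diagonal block. Its only minimizers are $\emptyset$ and $M_a$, since the chain is maximal. Deleting a row $i$ and a column $j$, a deficiency would produce a set $I\subseteq M_a\setminus\{i\}$ with surplus $\le 0$, hence a proper nonempty minimizer. That contradicts maximality.

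\textbf{Step 4 (CCF5).} If $\hat L$ is block-triangular with full-rank diagonal blocks, then each union $\hat M_k\cup\dots\cup\hat M_q$ has surplus $0$, so it lies in $\mathcal L_{\min}$. These unions form a chain, which refines into a maximal chain; by the chain-independence of the partition, each $\hat M_k$ is a union of the $M_a$.

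I expect the main obstacle to be Step 2: finding a single change of basis $B$ that realizes all zero patterns simultaneously, including the incomparable blocks. Rather than reprove it, I would cite Murota's proof there and only verify the order reversal, which Lemma~\ref{Lemma:PartOrderpreserving} already handles.
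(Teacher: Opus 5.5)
Your proposal is correct and follows the paper, which does not prove this theorem independently but states it as a special case of Murota's \cite[Theorem 4.4.4]{murota_book} for row-form LM-matrices, with the transposition and the reversed partial order handled as in your Steps 1 and 2. Two slips in your optional sketches: since $I_1\supsetneq I_2$, the flag is $\ker Q[I_1,\cdot]\subseteq\ker Q[I_2,\cdot]\subseteq\cdots$ rather than $\supseteq$; and when $\bar L[M_a,M_b]=\mathbf 0$ for a cover, the minimizer separating $M_a$ from $M_b$ must be the union of the blocks reachable from $M_a$ through nonzero off-diagonal blocks of $\bar L$, not a closure taken in $\preceq$ (the $\preceq$-up-closure of $M_a$ contains $M_b$).
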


Thm.~\ref{thm:ccf} is a special case of \cite[Theorem 4.4.4]{murota_book} for arbitrary LM-matrices, not necessarily square. The block structure of a combinatorial canonical form is unique, although the entries in the numerical part are not uniquely determined. To illustrate, we revisit our running example.

\begin{example}
\label{Ex:runningLattice}
 Consider the matrix $L$ from Example~\ref{Ex:RunningStart}. A direct computation shows that the lattice of minimizers of the LM-surplus function as in~\eqref{Eq:partition} equals
 {\small
\[
\begin{tikzpicture}[scale=1.2]
  \node (top) at (0,3) {$\{1,2,3,4,5,6\}$};
  \node (left) at (-1,1) {$\{3,4\}$};
  \node (right) at (1,1) {$\{5,6\}$};
   \node (left2) at (-1,2) {$\{1,2,3,4\}$};
  \node (right2) at (1,2) {$\{3,4,5,6\}$};
  \node (bottom) at (0,0) {$\emptyset$};

  \draw (bottom) -- (left) -- (left2) -- (top);
  \draw (bottom) -- (right) -- (right2) -- (top);
    \draw (left) -- (right2);
\end{tikzpicture}
\]}
By choosing a maximal descending chain $I_0 = \{1,2,3,4,5,6\}, I_1 = \{3,4,5,6\}, I_2 =\{5,6\}, I_3 = \emptyset$, we get the partition $M_1 = \{1,2\}, M_2 = \{3,4\}, M_3 = \{5,6\}$. Among these sets only $M_1$ and $M_2$ are comparable with respect to the partial order from~\eqref{Eq:PartialOrder}. We have $M_1 \prec' M_2$ and the matrix $L'$ from Example~\ref{Ex:RunningStart} is a combinatorial canonical form of $L$.

Using column operations on the numerical entries within each block, we get another combinatorial canonical form of $L$. For example, scaling the second or last column of $L'$ by arbitrary nonzero elements of $\mathbb{K}$ yields new combinatorial canonical forms of $L$.
\end{example}

\begin{remark}
The construction of a CCF is based on the LM-surplus function $\varphi_L$. While one can compute a CCF directly from this definition for small examples, this method scales poorly for larger matrices. In Sec.~\ref{Sec:Computation}, we present our \texttt{Julia} implementation of Murota's algorithm~\cite{murota_book, murota_algo}, which computes a CCF of an LM-matrix in polynomial time.
\end{remark}

We conclude this section with two reformulations of theorems from \cite{murota_book} that will be important in our applications to reaction networks. To that end, we need the following terminology.

\begin{definition}\label{Def:lmirr}
 A nonsingular LM-matrix $L \in \mathbb{K}(t_{ij})^{n\times n}$ is called \emph{LM-irreducible} if its \\row/column partition~\eqref{Eq:partition} consists of only one block.  
 
 A nonsingular LM-matrix $L \in \mathbb{K}(t_{ij})^{n\times n}$ is called \emph{essentially LM-irreducible} if its combinatorial canonical form has at most one diagonal block that contains variables $t_{ij}$, and all
the other diagonal blocks are $1\times 1$ matrices over $\mathbb{K}$.
\end{definition}

\begin{theorem}[Irreducibility, {\cite[Theorem~4.5.6]{murota_book}}] \label{thm:murotairr}
Let $L \in \mathbb{K}(t_{ij})^{n\times n}$ be a nonsingular LM-matrix with nonempty symbolic part. Then $\det(L)$ is an irreducible polynomial if and only if $L$ is essentially LM-irreducible.
\end{theorem}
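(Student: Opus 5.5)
The plan is to reduce to the diagonal blocks of a combinatorial canonical form and then treat a single LM-irreducible block. By Thm.~\ref{thm:ccf}, $L$ is LM-equivalent to a CCF $\bar L=P_r[T\mid QB]P_c$, so $\det(L)=\pm\det(B)^{-1}\det(\bar L)$, which is a nonzero constant times $\det(\bar L)$. By (CCF1), $\det(\bar L)=\prod_{a}\det\bar L[M_a,M_a]$, and each factor is nonzero by (CCF3).

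First I record two observations about a diagonal block $\bar L[M_a,M_a]$.
\begin{enumerate}
\item A block without variables is $1\times 1$. For a purely numeric nonsingular block, every row subset $I$ has rank $|I|$, so the restricted surplus vanishes identically. Then every subset is a minimizer, and the finest partition consists of singletons.
\item A block containing a variable $t_{ij}$ has nonconstant determinant. The determinant is affine in $t_{ij}$, and the coefficient of $t_{ij}$ is the cofactor $\pm\det\bar L[M_a\setminus\{i\},M_a\setminus\{j\}]$, which is nonzero by (CCF4). In fact every variable of the block genuinely occurs in its determinant.
\end{enumerate}
By the first observation, essential LM-irreducibility means exactly that exactly one diagonal block contains variables. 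The ``only if'' direction is then immediate: if two blocks contain variables, $\det(L)$ is a constant times a product of at least two nonconstant polynomials, hence reducible.

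For the ``if'' direction, it suffices to show that the determinant of an LM-irreducible block $L$ containing variables is irreducible. I use the following structural facts about $\det L$.
\begin{enumerate}
\item It is multilinear, since each variable occurs in one entry.
\item It is homogeneous of degree $n_T$, since each term of the Leibniz expansion picks exactly one entry from every symbolic column.
\item Every variable occurs in it, by (CCF4).
\end{enumerate}
Suppose $\det L=fg$ with $f,g$ nonconstant. By multilinearity, each variable occurs in exactly one of $f,g$. If $t_{ij}$ occurs in $f$ and $t_{kl}$ in $g$, write $f=f_1t_{ij}+f_0$ and $g=g_1t_{kl}+g_0$; then $\det L$ contains $t_{ij}t_{kl}$ with coefficient $f_1g_1\neq 0$. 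This is impossible if $i=k$ or $j=l$, because no Leibniz term uses two entries in the same row or column. Hence the symbolic columns split as $C_f\sqcup C_g$, and the rows carrying variables split as $R_f\sqcup R_g$, according to which factor their variables belong to. Both parts are nonempty, and homogeneity gives $\deg f=|C_f|$ and $\deg g=|C_g|$.

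The main obstacle is turning this splitting into a nontrivial minimizer of $\varphi_L$, which would contradict LM-irreducibility. I would use the generalized Laplace expansion along the symbolic columns $C_f$:
\[
\det L=\sum_{|R|=|C_f|}\pm\det T[R,C_f]\,\det L[[n]\setminus R,\,[n]\setminus C_f].
\]
Each $\det T[R,C_f]$ is a polynomial in the $f$-variables only. The complementary minor contains no $f$-variables, since those lie in the columns $C_f$, which have been removed. Comparing with $fg$ should force a single ``compatible'' family of row sets. I would aim to show that the set $I$ of rows used by the $g$-side (the rows in $R_g$ together with the rows that the numeric columns must cover alongside them) satisfies $\varphi_L(I)=0$ with $\emptyset\neq I\neq[n]$. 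The first attempt is to specialize the $f$-variables to generic constants. This makes $f$ a nonzero constant, so the resulting LM-matrix has determinant proportional to $g$. I would then compare surpluses via the rank formula $\rank(L)=\min\varphi_L+n$ applied to suitable submatrices. Getting this bookkeeping of numeric columns right is the delicate point, and I expect it to be the hardest step.
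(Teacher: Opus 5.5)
\paragraph{Gap: the ``if'' direction is not proved.}
The paper gives no proof of this statement; it quotes Murota's Theorem~4.5.6. Your reduction to the diagonal blocks of a CCF is fine, and so are your two observations and the ``only if'' direction. The first observation is exactly Lemma~\ref{lem:symrow}; like that lemma, it implicitly uses that each diagonal block of a CCF is itself LM-irreducible.

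The genuine gap is the ``if'' direction. You never prove that an LM-irreducible block $X=\bar L[M_a,M_a]$ containing variables has an irreducible determinant, and this is the entire content of the theorem. Your Laplace-expansion plan stops exactly where the argument is needed (``should force a single compatible family of row sets'', ``I would aim to show''). No candidate set $I$ is specified, and nothing shows $\varphi_X(I)=0$. The specialization idea does not fill this in either: turning the $f$-columns into generic numeric columns changes the numeric part. The surplus function of the specialized matrix is then not directly comparable with $\varphi_X$, as you acknowledge yourself.

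\paragraph{How to close it.}
You can finish without any surplus bookkeeping, using your own splitting step and two results already stated in the paper.
\begin{enumerate}
\item Suppose $\det X=fg$ with $f,g$ nonconstant. By multilinearity each variable occurs in exactly one factor. Pick $t_{ij}$ occurring in $f$ (hence not in $g$) and $t_{kl}$ occurring in $g$ (hence not in $f$).
\item By the cofactor formula, $(X^{-1})_{ji}=\partial_{t_{ij}}\det X/\det X=\partial_{t_{ij}}f/f$. This does not involve $t_{kl}$, so $\partial_{t_{kl}}(X^{-1})_{ji}=0$.
\item On the other hand, the Transitivity Lemma~\ref{lem:transition} gives $-\partial_{t_{kl}}(X^{-1})_{ji}=(X^{-1})_{jk}(X^{-1})_{li}$.
\item Both factors are nonzero by Theorem~\ref{thm:murotainv}, since $X$ is LM-irreducible. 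This is Murota's Theorem~4.5.4, which precedes 4.5.6, so there is no circularity.
\end{enumerate}
This contradiction shows $\det X$ is irreducible, and your second observation guarantees it is nonconstant. Then $\det L$ is a nonzero constant times $\det X$, so it is irreducible. The contradiction step needs only multilinearity; neither homogeneity nor the condition $i\neq k$, $j\neq l$ is used.
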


\begin{theorem}[Inverse, {\cite[Theorem 4.5.4]{murota_book}}] 
\label{thm:murotainv}
Let $L \in \mathbb{K}(t_{ij})^{n\times n}$ be a nonsingular LM-matrix. If $L$ is LM-irreducible, then
 $L^{-1}$ is fully dense, i.e., $(L^{-1})_{ij} \neq 0$ for each $i,j \in [n]$. 
\end{theorem}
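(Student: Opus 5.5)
The plan is to express the entries of $L^{-1}$ by cofactors and to show that every $(n-1)\times(n-1)$ submatrix obtained by deleting one row and one column of $L$ is nonsingular. Concretely, by Cramer's rule
\[
(L^{-1})_{ji}=\pm\frac{\det L[[n]\setminus\{i\},[n]\setminus\{j\}]}{\det L},
\]
and $\det L\neq 0$ by assumption. It therefore suffices to prove that $L':=L[[n]\setminus\{i\},[n]\setminus\{j\}]$ has full rank $n-1$ for every pair $i,j\in[n]$. The matrix $L'$ is again a (square) LM-matrix: its symbolic part consists of the columns of $N_T\setminus\{j\}$, its numeric part consists of the columns of $N_Q\setminus\{j\}$, and the rows $[n]\setminus\{i\}$ are kept. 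Hence the rank formula \cite[Theorem 4.2.5]{murota_book} applies to it with $n$ replaced by $n-1$:
\[
\rank L' = n-1+\min\{\varphi_{L'}(I)\mid I\subseteq [n]\setminus\{i\}\}.
\]
Since $I=\emptyset$ gives the value $0$, the task is to show $\varphi_{L'}(I)\ge 0$ for every $I\subseteq[n]\setminus\{i\}$.

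The key comparison is $\varphi_{L'}(I)\ge \varphi_L(I)-1$ for all such $I$, where $\varphi_L(I)$ is evaluated on the same row set. This holds because deleting the single column $j$ affects at most one of the two positive terms in \eqref{Eq:surplus}, and by at most one.
\begin{itemize}
\item If $j\in N_Q$, then $\rank L'[I,N_Q\setminus\{j\}]\ge \rank L[I,N_Q]-1$, while the symbolic column count is unchanged.
\item If $j\in N_T$, the numeric rank is unchanged, and the number of symbolic columns with a nonzero entry in rows $I$ drops by at most one.
\end{itemize}
In both cases the term $|I|$ is the same, since row $i\notin I$ plays no role.

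Now let $I\subseteq[n]\setminus\{i\}$. Because $L$ is nonsingular, $\min\varphi_L=0$.
\begin{itemize}
\item If $\varphi_L(I)\ge 1$, the comparison gives $\varphi_{L'}(I)\ge 0$.
\item If $\varphi_L(I)=0$, then $I\in\mathcal L_{\min}(\varphi_L)$. LM-irreducibility (Definition~\ref{Def:lmirr}) means the partition \eqref{Eq:partition} has a single block. Equivalently, the maximal chain in $\mathcal L_{\min}(\varphi_L)$ is $[n]\supsetneq\emptyset$, so $\mathcal L_{\min}(\varphi_L)=\{\emptyset,[n]\}$. Since $i\notin I$, we have $I\neq[n]$, hence $I=\emptyset$ and $\varphi_{L'}(\emptyset)=0$.
\end{itemize}
Thus $\min\varphi_{L'}=0$ and $\rank L'=n-1$. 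The minor is a nonzero element of $\mathbb K(t_{ij})$, so $(L^{-1})_{ji}\neq 0$.

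The step needing the most care is the bookkeeping in the comparison $\varphi_{L'}\ge\varphi_L-1$, in particular when $j$ is a numeric column, so that the numeric part of $L'$ has one fewer column. One must also confirm that \cite[Theorem 4.2.5]{murota_book} is legitimately applied to $L'$ with its own split $N_T\setminus\{j\}$, $N_Q\setminus\{j\}$. Both are routine once stated explicitly; the genuine input is simply that LM-irreducibility leaves no nontrivial minimizer avoiding row $i$.
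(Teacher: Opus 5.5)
Your proof is correct. The paper gives no proof of this statement; it imports it directly from \cite[Theorem 4.5.4]{murota_book}. So the comparison is with that citation. You derive the result from ingredients the paper already quotes: the rank formula \cite[Theorem 4.2.5]{murota_book} applied to the $(n-1)\times(n-1)$ submatrix $L'$, the estimate $\varphi_{L'}\ge\varphi_L-1$, and the fact that a one-block maximal chain forces $\mathcal L_{\min}(\varphi_L)=\{\emptyset,[n]\}$. The paper uses that last fact itself in Lemma~\ref{lem:symrow}. Cramer's rule then finishes.

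In effect you prove the one-block case of \textnormal{(CCF4)} in Thm.~\ref{thm:ccf}, but for $L$ itself rather than for a combinatorial canonical form. This is a real advantage. Reading \textnormal{(CCF4)} off a CCF $\bar L=P_r[\,T\mid QB\,]P_c$ does not immediately transfer to $L$. Deleting a numeric column of $[\,T\mid QB\,]$ gives, by Cauchy--Binet, a $\mathbb K$-linear combination of several $(n-1)$-minors of $L$ rather than a single one. Moreover, Remark~\ref{Rmk:CCFnotinvariant} shows that zero patterns of inverses are not LM-equivalence invariants in general. Your argument uses only the invariant function $\varphi_L$, so it yields full density of the inverse for every member of the LM-equivalence class at once.
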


\subsection{Inverses of matrices in combinatorial canonical form}\label{sec:mainlm}
The following theorem
extends Thm.~\ref{thm:murotainv} to LM-matrices that are not LM-irreducible.

\begin{theorem}[Fully dense blocks in the inverse of a CCF, Theorem~\ref{Thm:A}]\label{thm:ccfinv}
Let $\bar L \in \mathbb{K}(t_{ij})^{n\times n}$ be a nonsingular LM-matrix in CCF with row/column partition $[n] = M_1 \sqcup \dots \sqcup M_p$. 

Then the following holds, for $a,b \in [p]$:
\begin{equation*}
\begin{cases}\bar L^{-1}[M_{a},M_{b}]\text{ is fully dense} &\text{if } M_{a}\preceq M_{b},\\
\bar L^{-1}[M_{a},M_{b}]= \mathbf{0}&\text{otherwise.}
\end{cases}
\end{equation*}
That is, $\bar L^{-1}[M_{a},M_{b}]$ is fully dense if and only if there is a directed path $
M_{a}\leadsto M_{b}$
in the associated CCF directed graph $\mathcal D_{\operatorname{CCF}}(\bar L)$.
\end{theorem}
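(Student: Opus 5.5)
The plan is to use block-triangular inversion together with Thm.~\ref{thm:murotainv} applied to the diagonal blocks. The proof then reduces to showing that the off-diagonal blocks of $\bar L^{-1}$ are fully dense exactly along comparable pairs, and to excluding cancellation.

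\textbf{Zero part.} By (CCF1) $\bar L$ is upper block-triangular, so $\bar L^{-1}$ is too. Together with (CCF2), which makes $\bar L[M_a,M_b]=\mathbf 0$ for incomparable pairs, this gives the zero statement. To see it, view $\bar L$ as block-triangular with respect to any linear extension of $\preceq$. Set $U=\{M_c : M_a\preceq M_c\}$, an up-set, so that $\bar L[U^c,U]$ is zero; equivalently the rows in $U$ only meet columns in $U$. Then $\bar L$ is block-triangular with respect to the splitting $(U^c,U)$, in the appropriate order. Hence $\bar L^{-1}[U,U^c]=\mathbf 0$ and $\bar L^{-1}[U,U]=\bar L[U,U]^{-1}$. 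A symmetric argument with down-sets shows that $\bar L^{-1}[M_a,M_b]=\mathbf 0$ unless $M_a\preceq M_b$.

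\textbf{Density part.} Each diagonal block $\bar L[M_a,M_a]$ is nonsingular by (CCF3). It is LM-irreducible by (CCF4)/(CCF5): a finer block-triangularization of it would refine the CCF, contradicting (CCF5). By Thm.~\ref{thm:murotainv} its inverse is therefore fully dense, which handles $a=b$. For $M_a\prec M_b$ I induct on the length of a longest chain between them. I use the recursion
\[
\bar L^{-1}[M_a,M_b] = -\bar L[M_a,M_a]^{-1}\sum_{M_a\prec' M_c\preceq M_b}\bar L[M_a,M_c]\,\bar L^{-1}[M_c,M_b],
\]
which follows from the $(M_a,M_b)$ block of $\bar L\bar L^{-1}=I$. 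Strictly, the sum ranges over all $c$ with $M_a\prec M_c\preceq M_b$, since other blocks vanish by the zero part.

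\textbf{Main obstacle.} The difficulty is ruling out cancellation in this sum, because the numeric entries of $Q$ could conspire. I would first try to avoid the recursion. Entry $(i,j)$ of $\bar L^{-1}$ is, up to sign, $\det \bar L[[n]\setminus\{j\},[n]\setminus\{i\}]/\det\bar L$. So it suffices to show that this minor is nonzero, i.e. that the deleted matrix has full rank $n-1$. The rank of an LM-matrix is given by Murota's formula via $\varphi$, so the task becomes showing that the deleted LM-matrix has surplus minimum $-1$, not $-2$. Deleting row $i\in M_a$ and column $j\in M_b$ changes $\varphi$ in a controlled way on each $I$: it drops by one when $I\ni i$, and it loses one column contribution, or a rank unit of $Q$, only when $I$ touches column $j$. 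One then checks that a value of $-2$ would require a minimizer $I$ of $\varphi_{\bar L}$ with $i\in I$ and $I\cap M_b$ "not covering" $j$. Using the lattice structure, the block structure, and (CCF4) inside the blocks, such an $I$ can be chosen as a union of blocks containing $M_a$ but not $M_b$. That is precisely what $M_a\preceq M_b$ forbids by \eqref{Eq:PartialOrder}. Making this rank computation rigorous, in particular handling partial intersections of $I$ with blocks via (CCF4), is the step I expect to be hardest. If it resists, I would fall back on the recursion and argue genericity using the independent variables appearing in $\bar L[M_a,M_a]^{-1}$.
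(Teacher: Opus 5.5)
Your cofactor route works and differs from the paper's proof. Two things must be fixed before it closes, and the step you flagged as hardest turns out to be short.

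\textbf{Fixes and the closing argument.} First, the orientation. For $\iota\in M_a$ and $\eta\in M_b$, your own formula gives $(\bar L^{-1})_{\iota\eta}=\pm\det L'/\det\bar L$ with $L':=\bar L[[n]\setminus\{\eta\},[n]\setminus\{\iota\}]$. So you delete \emph{row} $\eta\in M_b$ and \emph{column} $\iota\in M_a$. Your sentence ``deleting row $i\in M_a$ and column $j\in M_b$'' instead describes a cofactor of an entry of $\bar L^{-1}[M_b,M_a]$. Applying Murota's rank formula $\rank=\min\varphi+(\text{number of rows})$ to the $(n-1)\times(n-1)$ LM-matrix $L'$, you need $\varphi_{L'}(I)\ge 0$ for all $I\subseteq[n]\setminus\{\eta\}$. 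In this normalization the threshold is $0$, not $-1$.

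Second, the step you feared needs neither (CCF4) nor any analysis of partial intersections:
\begin{itemize}
\item Removing column $\iota$ lowers either $\rank\bar L[I,N_Q]$ or the number of symbolic columns met by $I$, by at most one. It lowers neither if $\bar L[I,\{\iota\}]=\mathbf 0$.
\item Hence $\varphi_{L'}(I)\ge\varphi_{\bar L}(I)-1\ge-1$. A violating $I$ must therefore be an exact minimizer of $\varphi_{\bar L}$ with $\eta\notin I$ and $\bar L[I,\{\iota\}]\neq\mathbf 0$.
\item Minimizers are automatically unions of blocks (Birkhoff, cf.~\eqref{Eq:MinimizersIdeals}). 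So a row $r\in I\cap M_c$ with $\bar L_{r\iota}\neq0$ gives $M_c\subseteq I$ and $\bar L[M_c,M_a]\neq\mathbf 0$.
\item By (CCF1), (CCF2) and Lemma~\ref{Lemma:PartOrderpreserving} this gives $M_c\preceq M_a$. Then \eqref{Eq:PartialOrder} gives $M_a\subseteq I$.
\item But $M_b\not\subseteq I$ because $\eta\notin I$, contradicting $M_a\preceq M_b$.
\end{itemize}
This argument also covers $a=b$. You can therefore drop the appeal to Thm.~\ref{thm:murotainv}, the LM-irreducibility of the diagonal blocks, and the recursion. You can also drop the genericity fallback, which is exactly where cancellation would bite.

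In the zero part, the vanishing block is $\bar L[U,U^c]$, not $\bar L[U^c,U]$. This follows because nonzero blocks $\bar L[M_c,M_d]$ force $M_c\preceq M_d$, again by Lemma~\ref{Lemma:PartOrderpreserving}. Since $M_a\in U$ and $M_b\notin U$ whenever $M_a\not\preceq M_b$, this alone gives $\bar L^{-1}[M_a,M_b]=\mathbf 0$, and the down-set argument is unnecessary.

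\textbf{Comparison with the paper.} The paper gets the zero part from the chain expansion of the block inverse (Lemma~\ref{lemma:blockinverse}). It proves density by induction on the length of a shortest path in $\mathcal D_{\CCF}(\bar L)$:
\begin{itemize}
\item The base case is Thm.~\ref{thm:murotainv}.
\item Covers $M_a\prec' M_b$ use the Transitivity Lemma~\ref{lem:transition} together with Lemma~\ref{lem:symrow}. They also need a column-span argument, based on the nonvanishing of cover blocks in (CCF2), to exclude a purely numeric off-diagonal block that could be eliminated.
\item Longer paths again use transitivity, through a symbol in $\bar L[M_{a_{k-1}},M_{a_{k-1}}]$ or in $\bar L[M_{a_{k-1}},M_{a_k}]$.
\end{itemize}
Your route replaces all of this with Murota's rank formula and the Birkhoff structure of $\mathcal L_{\min}(\varphi_{\bar L})$. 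It treats every comparable pair uniformly, reproving Thm.~\ref{thm:murotainv} along the way, and it works with $\preceq$ directly rather than with paths. The paper's longer argument buys constructive information: it exhibits variables $t_{ij}$ with $\partial_{t_{ij}}(\bar L^{-1})_{\iota\eta}\neq0$. It also mirrors the Brehm--Fiedler ``transitivity of influence'' on which the reaction-network part of the paper is built.
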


\begin{remark}[The hypothesis ``in CCF'' is essential]\label{Rmk:CCFnotinvariant}
Thm.~\ref{thm:ccfinv} is a statement about a combinatorial canonical form $\bar L$, and not
about an arbitrary member of its LM-equivalence class. As noted in Sec.~\ref{sec:CCF}, the
row/column partition, the partial order $\preceq$ and the graph $\mathcal D_{\CCF}$ are invariants
of the LM-equivalence class, the zero pattern of the inverse, however, is \emph{not}. Thus for an
LM-matrix $L$ that is not in CCF, the conclusion of Thm.~\ref{thm:ccfinv}
may fail.

To illustrate this, consider the LM-matrix with empty symbolic part
\[
L=\begin{pmatrix}1&1\\0&1\end{pmatrix},
\qquad\text{so that}\qquad
L^{-1}=\begin{pmatrix}1&-1\\0&1\end{pmatrix}.
\]
Here $N_Q=\{1,2\}$ and $\varphi_L\equiv 0$, so every subset of $[2]$ is a minimizer and
$\mathcal L_{\min}(\varphi_L)=2^{[2]}$. The row/column partition is therefore $M_1=\{1\}$,
$M_2=\{2\}$, and the two blocks are incomparable with respect to~\eqref{Eq:PartialOrder}. The
conclusion of Thm.~\ref{thm:ccfinv} would give $L^{-1}[M_1,M_2]=\mathbf 0$, which is false. The
theorem does not apply, because $L$ is not in combinatorial canonical form: $M_1$ and $M_2$ are
incomparable while $L[M_1,M_2]=1\neq 0$, contradicting \textnormal{(CCF2)}. Taking
$B=\bigl(\begin{smallmatrix}1&-1\\0&1\end{smallmatrix}\bigr)$ gives the LM-equivalent matrix
$\bar L=LB=\bigl(\begin{smallmatrix}1&0\\0&1\end{smallmatrix}\bigr)$, which is a combinatorial canonical form of $L$, and indeed
$\bar L^{-1}[M_1,M_2]=\mathbf 0$ as predicted.
\end{remark}

The remainder of this section is devoted to the proof of Thm.~\ref{thm:ccfinv}, which we divide into several preliminary results. We begin by recalling a technique due to Brehm and Fiedler~\cite[Sec. 6]{BF18}, originally introduced to study the `transitivity of influence' in the context of sensitivity analysis for reaction networks, see also Sec.~\ref{sec:sensitivity}.

\begin{lemma}[Transitivity Lemma]\label{lem:transition}
Let $L \in \mathbb{K}(t_{ij})^{n\times n}$ be a nonsingular LM-matrix, and suppose that $L_{ij}=t_{ij}$ for some $i,j \in [n]$ indexed as in Remark~ \ref{Rmk:VariableIndexing}. Then, for any pair of indices $\iota,\eta \in [n]$, we have
\[
-\,\partial_{t_{ij}}\,(L^{-1})_{\iota\eta}=( L^{-1})_{\iota i}\,(L^{-1})_{j\eta}.
\]
In particular, if $( L^{-1})_{\iota i} \neq 0$ and $(L^{-1})_{j\eta} \neq 0$,
then $(L^{-1})_{\iota\eta}\neq 0.$
\end{lemma}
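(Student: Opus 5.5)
The identity follows from the standard formula for differentiating the inverse of a matrix depending on a parameter. Since $L$ is nonsingular, $L^{-1}$ is a well-defined matrix over $\mathbb{K}(t_{ij})$, and its entries are rational functions. Partial derivatives $\partial_{t_{ij}}$ extend from $\mathbb{K}[t_{ij}]$ to the fraction field via the quotient rule, and they satisfy the Leibniz rule for matrix products. We differentiate the identity $L L^{-1} = I_n$ with respect to $t_{ij}$. This gives
\[
(\partial_{t_{ij}} L)\, L^{-1} + L\, \partial_{t_{ij}}(L^{-1}) = 0,
\qquad\text{hence}\qquad
\partial_{t_{ij}}(L^{-1}) = -\,L^{-1}\,(\partial_{t_{ij}} L)\,L^{-1}.
\]

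The key structural input is the indexing convention of Remark~\ref{Rmk:VariableIndexing}. The symbolic entries of $L$ are pairwise distinct algebraically independent variables, and $t_{ij}$ occurs exactly once, in position $(i,j)$. All other entries are constants in $\mathbb{K}$ or other variables. Therefore $\partial_{t_{ij}} L = E_{ij}$, the matrix unit with a $1$ in position $(i,j)$ and zeros elsewhere. Substituting this, the $(\iota,\eta)$ entry of $L^{-1}E_{ij}L^{-1}$ is $(L^{-1})_{\iota i}(L^{-1})_{j\eta}$. This yields the claimed formula $-\partial_{t_{ij}}(L^{-1})_{\iota\eta} = (L^{-1})_{\iota i}(L^{-1})_{j\eta}$.

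For the second assertion, assume $(L^{-1})_{\iota i}\neq 0$ and $(L^{-1})_{j\eta}\neq 0$. Their product is nonzero because $\mathbb{K}(t_{ij})$ is a field. Hence $\partial_{t_{ij}}(L^{-1})_{\iota\eta}\neq 0$. If $(L^{-1})_{\iota\eta}$ were the zero rational function, its derivative would be zero. So $(L^{-1})_{\iota\eta}\neq 0$.

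The only point needing care is the formal legitimacy of differentiating rational functions in an arbitrary field $\mathbb{K}$, possibly of positive characteristic. Here $\partial_{t_{ij}}$ is the formal $\mathbb{K}$-derivation on $\mathbb{K}(t_{ij})$. It is well defined, satisfies the product rule, and kills $0$, which is all the argument uses. The converse implication (nonzero derivative implies nonzero function) is never needed, so positive characteristic causes no trouble.
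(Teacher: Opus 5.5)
Your proof is correct and follows the paper's argument: differentiate $LL^{-1}=I$, use $\partial_{t_{ij}}L=e_ie_j^{\top}$ to get $-\partial_{t_{ij}}(L^{-1})=L^{-1}e_ie_j^{\top}L^{-1}$, read off the $(\iota,\eta)$ entry, and conclude nonvanishing from the nonzero derivative. One small slip in your last paragraph: the parenthetical is reversed, since the implication you actually use is ``nonzero derivative implies nonzero function'' (valid in any characteristic), while the one you rightly avoid is ``nonzero function implies nonzero derivative''.
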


\proof
Applying the product rule for entrywise differentiation to the identity matrix $LL^{-1}$ with respect to the variable $t_{ij}$ yields
\begin{align}
\label{Eq:TransitivityProof}
\mathbf{0} = \partial_{t_{ij}}(L\, L^{-1}) = (\partial_{t_{ij}} L)\, L^{-1} + L\,\left(\partial_{t_{ij}} (L^{-1})\right).
\end{align}
Denote by $e_i$ the $i$-th standard basis vector of size $n \times 1$. Since only the entry $L_{ij} = t_{ij}$ depends on~$t_{ij}$, it follows that $\partial_{t_{ij}} L = e_i e_j^{\top}$.  From \eqref{Eq:TransitivityProof}, by multiplying on the left by $L^{-1}$, we get
\[
-\,\partial_{t_{ij}}\, (L^{-1}) =  L^{-1}(\partial_{t_{ij}} L)\, L^{-1} =  L^{-1}e_i e_j^{\top} L^{-1}.
\]
Evaluating this equality for the $(\iota,\eta)$ entry gives
\begin{equation}\label{eq:transstep}
-\,\partial_{t_{ij}}\,( L^{-1})_{\iota\eta} = (L^{-1})_{\iota i}\,(L^{-1})_{j\eta}.
\end{equation}
If the right-hand side in \eqref{eq:transstep} is nonzero, then $(L^{-1})_{\iota\eta}$ depends on $t_{ij}$, in particular it is nonzero.
\endproof

The Transitivity Lemma~\ref{lem:transition} will be the engine for the proof of Thm.~\ref{thm:ccfinv}. Informally speaking, it allows us to deduce from ``two nonzero entries" that a ``third entry is nonzero" provided there is a variable $t_{ij}$ sitting at the right place.
The following lemma will be crucial for identifying such a variable.

\begin{lemma}[A symbol in every row]\label{lem:symrow}
Let $\bar L \in \mathbb{K}(t_{ij})^{n\times n}$ be a nonsingular LM-matrix in CCF with row/column partition $[n] = M_1 \sqcup \dots \sqcup M_p$ and let $a \in [p]$ with $\vert M_a\vert \ge 2$. Then for every $i \in M_a$ there exists $j \in M_a$ such that $\bar L_{ij} =t_{ij}$.
\end{lemma}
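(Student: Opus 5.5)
Suppose, for contradiction, that some row $i\in M_a$ of the diagonal block contains no symbolic entry. That is, $\bar L_{ij}\in\mathbb K$ for every $j\in M_a$, so $\bar L[\{i\},M_a]$ is a purely numeric row. We will show that this row must split off, which contradicts the minimality of the block $M_a$. The natural tool is property (CCF4), possibly combined with the characterisation of the block partition via minimizers of $\varphi_{\bar L}$.

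\textbf{Attempt via (CCF4).} Since $\bar L$ is in CCF, the numeric columns of $\bar L[M_a,M_a]$ are the columns of $\bar L[M_a,N_Q]$ restricted to the block. Row $i$ restricted to $M_a$ is then supported only on numeric columns $J_Q:=M_a\cap N_Q$. By (CCF3), $\bar L[M_a,M_a]$ is nonsingular, so row $i$ is nonzero and some $j_0\in J_Q$ has $\bar L_{ij_0}\ne 0$.

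We would like to perform a numeric column operation inside the block, using only columns in $J_Q$, so that row $i$ has a single nonzero entry in column $j_0$. This operation is an LM-equivalence (a change of basis $B$ in the numeric part) and preserves the ranks of submatrices involving all of $J_Q$. However, it does not obviously preserve the ranks appearing in (CCF4) for arbitrary column deletions. So instead of chasing (CCF4) directly, we argue with $\varphi$.

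\textbf{Argument via $\varphi$.} Let $I:=I_a\cup\{i\}$, where $I_a$ is the chain element with $M_a=I_{a-1}\setminus I_a$, so that $I_a\subsetneq I\subsetneq I_{a-1}$, the last inclusion being strict because $|M_a|\ge 2$. The goal is to show that $I$ is also a minimizer, which contradicts maximality of the chain; equivalently, it contradicts the block $M_a$ being indivisible.

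Because $\bar L$ is upper block triangular, the rows in $I_{a-1}$ are supported on columns in $I_{a-1}$. We compare $\varphi(I)$ with $\varphi(I_a)=0$. Adding row $i$ adds no new symbolic columns: row $i$ has no symbols in $M_a$ by assumption, and its symbols in later blocks lie in columns already met by $I_a$ rows. This last point needs justification, and we get it from (CCF2)/(CCF3): every column in a later block $M_b$, $b>a$, is hit by rows of $I_a$, because those blocks have full rank and a symbolic column index in $M_b$ is nonzero in some row of $M_b\subseteq I_a$. The rank of the numeric part increases by at most one, and $|I|$ increases by exactly one. Hence $\varphi(I)\le\varphi(I_a)+1-1=0$. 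Since the minimum of $\varphi$ is $0$ by nonsingularity, $I$ is a minimizer. It lies strictly between $I_a$ and $I_{a-1}$, which contradicts maximality of the chain. The subtle step, which we regard as the main obstacle, is verifying that the symbolic columns of row $i$ outside $M_a$ are already covered by $I_a$. This rests on block upper triangularity together with the fact that each later diagonal block is nonsingular, so each of its symbolic columns is nonzero in one of its own rows.
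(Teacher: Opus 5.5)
Your proof is correct, but it takes a different route from the paper's.

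The paper restricts to the diagonal block $L'=\bar L[M_a,M_a]$. It cites Murota's theorem that an LM-irreducible block has trivial minimizer lattice $\mathcal L_{\min}(\varphi_{L'})=\{\emptyset,M_a\}$. It then notes that a symbol-free row $i$ gives $\varphi_{L'}(\{i\})=1+0-1=0$, which is a forbidden minimizer.

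You instead stay with the global function $\varphi_{\bar L}$. You show that $I_a\cup\{i\}$ is a minimizer lying strictly between the consecutive chain elements $I_a$ and $I_{a-1}$. The extra work is checking that row $i$ brings in no new symbolic column, and you get this correctly from two facts:
\begin{itemize}
\item (CCF1): row $i$ vanishes on the columns of earlier blocks;
\item (CCF3): each symbolic column of a later block $M_b$ already has a nonzero entry in a row of $M_b\subseteq I_a$.
\end{itemize}
Your estimate $\varphi_{\bar L}(I_a\cup\{i\})\le\varphi_{\bar L}(I_a)+1+0-1$ is thus the paper's one-row computation lifted to the whole matrix. What your version buys is self-containedness: it uses only the definition of the blocks through maximal chains of minimizers, together with (CCF1) and (CCF3). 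It does not need the external fact that the diagonal blocks of a CCF have trivial minimizer lattice. The paper's version is shorter, given that citation.

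Two small points. First, you tacitly use $I_a=M_{a+1}\cup\dots\cup M_p$. This holds by telescoping from $I_0=[n]$ and $M_k=I_{k-1}\setminus I_k$, once the display numbering of the CCF is the one induced by the chain. Alternatively, you can finish by noting that every minimizer is a union of blocks, which $I_a\cup\{i\}$ is not. Second, the opening (CCF4) paragraph can simply be deleted, since your $\varphi$-argument never uses it.
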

\proof
Consider the diagonal block $\bar{L}' = \bar L[M_a,M_a]$, and denote $N_T, N_Q \subseteq M_a$ the indices of columns giving the symbolic and numeric part of $L'$ respectively.
Since the diagonal block $\bar{L}'$ is LM-irreducible, from \cite[Thm 4.5.1]{murota_book} it follows that $\mathcal{L}_{\min}(\varphi_{L'}) = \{\emptyset, M_a\}$.

Suppose the $i$-th row of $\bar{L}'$ did not contain any variable. In particular, we have that
\[|\{\, j\in N_T \mid L'_{ij} = t_{ij}\,\}| = 0,\]
Since $\bar{L}'$ has full rank, $\bar{L}'$ cannot contain any zero row, which implies that $\rank(\bar{L}'[\{i\},N_Q])= 1$. Thus, $\varphi_{L'} ( \{i\} ) = 1 + 0 -1 = 0$ and $\{i\} \in  \mathcal{L}_{\min}(\varphi_{L'}) = \{ \emptyset, M_a\}$, which contradicts $\vert M_a \vert \geq 2$. 
\endproof

The next ingredient is the  inversion formula for an upper block triangular matrix.

\begin{lemma}[Inverse of a block upper triangular matrix]
\label{lemma:blockinverse}
Let $L \in \mathbb{K}(t_{ij})^{n\times n}$ be a nonsingular upper block triangular matrix
with $p$ square diagonal blocks $X_{11},\dots,X_{pp}$,
\begin{align*}
    L = \begin{pmatrix}
        X_{11} & X_{12} & \dots & X_{1p}   \\
        0 & X_{22}  & \dots  & X_{2p} \\
         \vdots &  &\ddots&\vdots\\
             0 & 0  & \dots & X_{pp} 
    \end{pmatrix}, \qquad    L^{-1} = \begin{pmatrix}
        Y_{11} & Y_{12}  & \dots  &Y_{1p} \\
        0 & Y_{22}   & \dots  & Y_{2p} \\
         \vdots &  &\ddots&\vdots\\
             0 & 0  & \dots &  Y_{pp} 
    \end{pmatrix}.
\end{align*}
Then every diagonal block $X_{aa}$ is invertible, and the blocks of $L^{-1}$ are given
by
\begin{align*}
   (i)\quad & Y_{aa} = X_{aa}^{-1} \ \text{ for all } a\in[p], 
   \qquad Y_{ab} = -X_{aa}^{-1} \sum_{k=a+1}^{b} X_{ak}\, Y_{kb} \ \text{ for } a<b,\\
  (ii)\quad & Y_{ab}
=\sum_{a=a_0<a_1<\dots<a_s=b}(-1)^{s}\,
X_{a_0a_0}^{-1}X_{a_0a_1}X_{a_1a_1}^{-1}X_{a_1a_2}\cdots X_{a_{s-1}a_s}X_{a_sa_s}^{-1},
\end{align*}
where in \textnormal{(ii)} the sum runs over all strictly increasing chains from $a$ to
$b$ in $[p]$.
\end{lemma}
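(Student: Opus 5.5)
The statement to prove is Lemma~\ref{lemma:blockinverse}, a purely linear-algebraic fact. We plan to derive it from $\det L=\prod_a \det X_{aa}$ and the identity $LL^{-1}=I$, in three steps.

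\emph{Invertibility of the diagonal blocks.} Since $L$ is upper block triangular, expanding the determinant (for instance by induction on $p$, using the Laplace expansion along the first block column, or the Schur-complement factorization of a $2\times 2$ block triangular matrix) gives $\det L=\prod_{a=1}^p \det X_{aa}$. As $L$ is nonsingular over the field $\mathbb{K}(t_{ij})$, every factor is nonzero, so each $X_{aa}$ is invertible.

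\emph{Block upper triangularity of $L^{-1}$ and formula (i).} Let $Y$ be the block upper triangular matrix whose blocks are defined recursively by $Y_{aa}=X_{aa}^{-1}$, by $Y_{ab}=-X_{aa}^{-1}\sum_{k=a+1}^{b}X_{ak}Y_{kb}$ for $a<b$, and by $Y_{ab}=0$ for $a>b$. The recursion is well defined if we proceed by decreasing $a$ for each fixed $b$, because the right-hand side only involves $Y_{kb}$ with $k>a$. We then check $LY=I$ blockwise:
\[
(LY)_{ab}=\sum_{k=a}^{b}X_{ak}Y_{kb}.
\]
This sum is empty, hence $0$, for $a>b$. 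It equals $X_{aa}X_{aa}^{-1}=I$ for $a=b$. For $a<b$ it equals $X_{aa}Y_{ab}+\sum_{k>a}X_{ak}Y_{kb}=0$ by the definition of $Y_{ab}$. Since the inverse is unique, $Y=L^{-1}$. This proves at once that $L^{-1}$ has the displayed block triangular shape and that formula (i) holds.

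\emph{Path-sum formula (ii).} We argue by induction on $b-a$. The base case $a=b$ is the single chain of length $s=0$, which contributes $X_{aa}^{-1}$. For $a<b$, substitute the induction hypothesis for each $Y_{kb}$ with $a<k\le b$ into (i):
\[
Y_{ab}=-\sum_{k=a+1}^{b} X_{aa}^{-1}X_{ak}\sum_{k=a_1<\dots<a_s=b}(-1)^{s-1}\,X_{a_1a_1}^{-1}X_{a_1a_2}\cdots X_{a_sa_s}^{-1}.
\]
Prepending $a_0=a$ to each inner chain yields a bijection onto the set of all strictly increasing chains from $a$ to $b$ of length at least one; the first step $a\to a_1=k$ is what the outer sum over $k$ records. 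Under this bijection the sign $-(-1)^{s-1}$ becomes $(-1)^{s}$, where $s$ is now the length of the extended chain. This gives (ii).

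There is no real obstacle here. The only points needing care are the ordering of the recursion, which guarantees that (i) is well defined, and the bookkeeping of chain lengths and signs in the induction for (ii).
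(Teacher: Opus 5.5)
Your proposal is correct and follows the paper's proof: you establish (ii) by induction on $b-a$, substituting the inductive hypothesis for each $Y_{kb}$ into the recursion (i) and prepending $a$ to each chain. The differences are minor. You verify (i) and the invertibility of the $X_{aa}$ directly, where the paper simply calls (i) classical. Your base case $a=b$, using the length-zero chain, lets you treat the $k=b$ term uniformly instead of splitting it off as the paper does.
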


\begin{proof}
Formula (i) is the classical inversion formula for a nonsingular block upper triangular
matrix. We prove (ii) for $a<b$ by induction on the gap
$b-a\ge 1$.

\emph{Base case: $b-a=1$.} The only chain from $a$ to $b=a+1$ is $a<a+1$ (length $s=1$).
By~(i),
\[
Y_{a,a+1}=-X_{aa}^{-1}X_{a,a+1}Y_{a+1,a+1}=-X_{aa}^{-1}X_{a,a+1}X_{a+1,a+1}^{-1},
\]
which is exactly the corresponding term of (ii).

\emph{Inductive step: $b-a\ge 2$.} Assume~(ii) holds for all pairs with a strictly smaller gap. By~(i),
\[
Y_{ab}=-X_{aa}^{-1}\sum_{k=a+1}^{b} X_{ak}Y_{kb}
=\underbrace{-X_{aa}^{-1}X_{ab}X_{bb}^{-1}}_{k=b}
\;-\;X_{aa}^{-1}\!\!\sum_{k=a+1}^{b-1} X_{ak}Y_{kb}.
\]
The first term is the length-one chain $a<b$. For $a+1\le k\le b-1$ the gap $b-k$ satisfies
$1\le b-k<b-a$, so by the inductive hypothesis $Y_{kb}$ is given by~(ii). 
Thus, we have
\begin{align*}
    Y_{ab} = -X_{aa}^{-1}X_{ab}X_{bb}^{-1}
\;+\;\!\!\sum_{k=a+1}^{b-1} \sum_{k=k_0<k_1<\dots<k_s=b}(-1)^{s+1}\, X_{aa}^{-1}X_{ak} \,X_{k_0k_0}^{-1}X_{k_0k_1}\cdots X_{k_sk_s}^{-1}
\end{align*}
As $k$ ranges over
$\{a+1,\dots,b-1\}$ and the inner sums range over all chains from $k$ to $b$, these terms
run exactly once over all chains from $a$ to $b$ of length $\ge 2$. Together with the
length-one chain $a<b$, this is precisely the sum in~(ii). 
\end{proof}

With these preliminary lemmas in place, we are now ready to prove Thm.~\ref{thm:ccfinv}. We begin by considering the case of zero blocks and blocks satisfying $M_a \prec' M_b$.

\begin{prop}
\label{Prop:InverseCCFformula}
  Let $\bar L \in \mathbb{K}(t_{ij})^{n\times n}$ be a nonsingular LM-matrix in CCF with row/column partition $[n] = M_1 \sqcup \dots \sqcup M_p$, and let $a, b \in [p]$. 
    \begin{itemize}
    \item[(i)] If $a > b$, then $\bar L^{-1}[M_{a},M_{b}]=\mathbf{0}$.  
        \item[(ii)] If $a \leq b$ and $M_{a},\, M_{b},$ are not comparable w.r.t. $\preceq$, then  $\bar L^{-1}[M_{a},M_{b}]=\mathbf{0}$. 
        \item[(iii)]  If  $M_{a}\prec' M_{b}$, then $\bar L^{-1}[M_{a},M_{b}] =- \bar L[M_{a},M_{a}]^{-1} \cdot \bar L[M_{a},M_{b}] \cdot \bar L[M_{b},M_{b}]^{-1}$.
        \end{itemize}
\end{prop}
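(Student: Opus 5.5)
The plan is to read all three claims off the block inversion formula of Lemma~\ref{lemma:blockinverse}, applied to $\bar L$ viewed as upper block triangular with diagonal blocks $X_{aa}=\bar L[M_a,M_a]$ and $X_{ab}=\bar L[M_a,M_b]$. The triangular shape is given by (CCF1), and each $X_{aa}$ is invertible by (CCF3). Lemma~\ref{Lemma:PartOrderpreserving} ensures that the block indexing is compatible with $\preceq$.

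For (i): $\bar L$ is upper block triangular by (CCF1), so its inverse is upper block triangular with respect to the same partition. This is the first part of Lemma~\ref{lemma:blockinverse}, and it gives $Y_{ab}=\mathbf 0$ for $a>b$.

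For (ii), use the chain formula (ii) of Lemma~\ref{lemma:blockinverse}. The case $a=b$ cannot occur, since every block is comparable with itself. So take $a<b$ and write
\[
Y_{ab}=\sum_{a=a_0<a_1<\dots<a_s=b}(-1)^{s}\,X_{a_0a_0}^{-1}X_{a_0a_1}X_{a_1a_1}^{-1}\cdots X_{a_{s-1}a_s}X_{a_sa_s}^{-1}.
\]
Consider a single term. If $X_{a_{r-1}a_r}\neq\mathbf 0$ for every $r$, then by (CCF2) each consecutive pair $M_{a_{r-1}},M_{a_r}$ is comparable. By Lemma~\ref{Lemma:PartOrderpreserving}, together with $a_{r-1}<a_r$, comparability forces $M_{a_{r-1}}\prec M_{a_r}$ and excludes the reverse. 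Transitivity then gives $M_a\preceq M_b$, a contradiction. Hence every chain contains a zero factor, every summand vanishes, and $Y_{ab}=\mathbf 0$.

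For (iii), assume $M_a\prec' M_b$, so that $a<b$ by Lemma~\ref{Lemma:PartOrderpreserving}. Apply the chain formula again. The length-one chain $a<b$ contributes exactly $-X_{aa}^{-1}X_{ab}X_{bb}^{-1}$. Now take any chain of length $s\ge2$ whose connecting factors are all nonzero. By the argument in (ii) it yields $M_a\prec M_{a_1}\prec\dots\prec M_b$, and its intermediate element $M_{a_1}$ satisfies $M_a\prec M_{a_1}\prec M_b$. This contradicts the cover relation $\prec'$. So every longer chain has a zero connecting block and contributes nothing, which gives the stated formula.

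The main obstacle is the step used in both (ii) and (iii): a nonzero off-diagonal block $X_{cd}$ with $c<d$ must mean $M_c\prec M_d$. This needs (CCF2) to rule out incomparable pairs, and Lemma~\ref{Lemma:PartOrderpreserving} to exclude $M_d\prec M_c$ when $c<d$. Both are available, so the argument should go through directly.
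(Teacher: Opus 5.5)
Your proposal is correct and follows essentially the same route as the paper: both read (i) off the block upper triangular shape via Lemma~\ref{lemma:blockinverse}, and both obtain (ii) and (iii) from the chain formula. The key observation in both is that a chain whose connecting blocks are all nonzero forces $M_a\preceq M_{a_1}\preceq\dots\preceq M_b$ by (CCF2) and Lemma~\ref{Lemma:PartOrderpreserving}. The differences are cosmetic: the paper argues (ii) by contraposition, and you additionally note that the case $a=b$ in (ii) is vacuous.
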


\begin{proof}
Since $\bar L$ is an upper block triangular matrix, its inverse is also block triangular by Lemma~\ref{lemma:blockinverse}. This shows part (i).

If $a \leq b$, then by Lemma~\ref{lemma:blockinverse}, we also have that $\bar{L}^{-1}[M_{a},M_{b}] $ equals
   {\small
    \begin{align}
    \label{Eq:ProofInverseFormula}
\sum_{a=a_0<a_1<\dots<a_s=b}(-1)^{s}\,
\bar L[M_{a_0},M_{a_0}]^{-1} \bar L[M_{a_0},M_{a_1}] \bar L[M_{a_1},M_{a_1}]^{-1}\cdots  \bar L[M_{a_{s-1}},M_{a_s}]\bar L[M_{a_s},M_{a_s}]^{-1}.
    \end{align}
    }
The proof of both parts (ii) and (iii) relies on the following observation. Let  $a=a_0<a_1<\dots<a_s=b$ be a strictly increasing chain such that  
   \begin{align}
    \label{Eq:ProofInverseFormulaSumman}
       \bar L[M_{a_0},M_{a_0}]^{-1} \bar L[M_{a_0},M_{a_1}] \bar L[M_{a_1},M_{a_1}]^{-1}\cdots  \bar L[M_{a_{s-1}},M_{a_s}]\bar L[M_{a_s},M_{a_s}]^{-1} \neq 0.
   \end{align}
   All the blocks $\bar L[M_{a_0},M_{a_1}], \bar L[M_{a_1},M_{a_2}], \dots ,\bar L[M_{a_{s-1}},M_{a_s}]$ are non-zero, which by (CCF2) in Thm.~\ref{thm:ccf} and Lemma \ref{Lemma:PartOrderpreserving} implies that $M_a = M_{a_0} \preceq M_{a_1} \preceq  M_{a_2} \preceq \dots \preceq M_{a_{s-1}} \preceq M_{a_s} = M_b$.

   Now, we prove part (ii) by contraposition. Assume that $\bar L^{-1}[M_{a},M_{b}]\neq \mathbf{0}$. Thus, from~\eqref{Eq:ProofInverseFormula} it follows that there must exist a strictly increasing chain $a=a_0<a_1<\dots<a_s=b$ such that the expression in~\eqref{Eq:ProofInverseFormulaSumman} is nonzero. Thus, the above observation implies that  $M_a \preceq  M_b$.

If  $M_{a}\prec' M_{b}$, then $a < b$ by Lemma~\ref{Lemma:PartOrderpreserving}.
Note that $-\bar L[M_{a},M_{a}]^{-1} \cdot \bar L[M_{a},M_{b}] \cdot \bar L[M_{b},M_{b}]^{-1}$ is a summand of \eqref{Eq:ProofInverseFormula} for the chain $a < b$ of length $s = 1$. Assume there exists another nonzero summand in \eqref{Eq:ProofInverseFormula} for some strictly increasing chain $a=a_0<a_1<\dots<a_s=b$. By the above observation we have that $M_a = M_{a_0} \preceq M_{a_1} \preceq  M_{a_2} \preceq \dots \preceq M_{a_{s-1}} \preceq M_{a_s} = M_b$. Since $M_a \prec' M_b$, it follows that $s=1$ concluding the proof.
\end{proof}

We prove that the blocks satisfying $M_a \preceq M_b$ are fully dense by induction on the shortest directed path between $M_a$ and $M_b$ in the CCF directed graph $\mathcal{D}_{\CCF}(\bar L)$. The base case, where the path has length zero, follows directly from Thm.~\ref{thm:murotainv}. The case of paths of length one requires a more delicate argument. The induction step for arbitrary path length then follows by applying the Transitivity Lemma~\ref{lem:transition}.

\begin{prop}
\label{Prop:DenseCCFBlockCovering}
  Let $\bar L \in \mathbb{K}(t_{ij})^{n\times n}$ be a nonsingular LM-matrix in CCF with row/column partition $[n] = M_1 \sqcup \dots \sqcup M_p$
  , and let $a, b \in [p]$. If  $M_{a}\prec' M_{b}$ 
  , then $\bar L^{-1}[M_{a},M_{b}]$
  is fully dense.
\end{prop}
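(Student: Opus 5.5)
We plan to start from Prop.~\ref{Prop:InverseCCFformula}(iii). Write $A:=\bar L[M_a,M_a]$, $B:=\bar L[M_a,M_b]$ and $C:=\bar L[M_b,M_b]$. Then
\[
\bar L^{-1}[M_a,M_b]=-A^{-1}BC^{-1},
\]
where $B\neq\mathbf 0$ by (CCF2). By (CCF4) and \cite[Thm.~4.5.1]{murota_book}, the diagonal blocks $A$ and $C$ are LM-irreducible. Hence $A^{-1}$ and $C^{-1}$ are fully dense by Thm.~\ref{thm:murotainv}. Moreover, $A$ and $C$ involve disjoint sets of variables, and these are also disjoint from the variables of $B$. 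We split into two cases according to whether $B$ contains a symbolic entry.

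\emph{Case 1: $B$ contains a symbol $\bar L_{ij}=t_{ij}$ with $i\in M_a$, $j\in M_b$.} Fix $\iota\in M_a$ and $\eta\in M_b$. By Lemma~\ref{lemma:blockinverse} we have $(\bar L^{-1})_{\iota i}=(A^{-1})_{\iota i}\neq 0$ and $(\bar L^{-1})_{j\eta}=(C^{-1})_{j\eta}\neq0$. The Transitivity Lemma~\ref{lem:transition} then gives $(\bar L^{-1})_{\iota\eta}\neq 0$ directly, which settles this case.

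\emph{Case 2: $B$ is purely numeric.} Then $B$ is supported on numeric columns of $M_b$, and for fixed $\iota,\eta$ we must show
\[
u^\top B v\neq 0,\qquad u:=(A^{-1})^\top e_\iota,\quad v:=C^{-1}e_\eta .
\]
Because the variables of $u$ and of $v$ are disjoint, it suffices to argue in two stages. First we show that $x:=Bv\neq 0$. We then specialize the $C$-variables generically, obtaining a nonzero numeric vector $x_0\in\mathbb K^{M_a}$, and show $e_\iota^\top A^{-1}x_0\neq 0$; this suffices because $e_\iota^\top A^{-1}x$ then has a nonzero specialization. By Cramer's rule,
\[
e_\iota^\top A^{-1}x_0=\frac{\det A^{(\iota\leftarrow x_0)}}{\det A},
\]
where $A^{(\iota\leftarrow x_0)}$ denotes $A$ with column $\iota$ replaced by $x_0$. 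The matrix $A^{(\iota\leftarrow x_0)}$ is again an LM-matrix: we declare $x_0$ to be a numeric column. Its nonsingularity can be tested via Murota's rank formula $\rank=\min\varphi+n$. We would compare its surplus function with $\varphi_A$. Since $A$ is LM-irreducible, $\varphi_A(I)\ge 1$ for every $\emptyset\neq I\subsetneq M_a$. Replacing a single column changes each term of $\varphi$ by at most one, so the new surplus is still $\ge 0$ on proper nonempty subsets. The remaining subset $I=M_a$ must then be checked using $x_0\neq0$. The statement $Bv\neq 0$ is handled in exactly the same way with the roles of rows and columns exchanged. Here $v$ is a fully dense generic column, and we argue via a Cramer-type expression for $C^{-1}$ with a row of $C$ replaced by a nonzero row of $B$, using the LM-irreducibility of $C$ through its surplus function. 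As a fallback, if the surplus bookkeeping does not close, we would use (CCF5). Suppose some row of $B$ annihilated $v$ identically. Then a change of basis of the numeric columns within $M_b$ would produce an LM-equivalent block triangular form refining the given one, or would contradict $M_a\prec' M_b$.

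The main obstacle is Case~2, and specifically the boundary subset $I=M_a$ (respectively $I=M_b$) in the surplus comparison. There the replaced numeric column could in principle lie in the span of the other numeric columns, making the modified matrix singular. Ruling this out should use that $B$ is a genuine off-diagonal block of a \emph{finest} CCF, via (CCF2) and (CCF5). The hypothesis ``in CCF'' must enter precisely here, as Remark~\ref{Rmk:CCFnotinvariant} shows.
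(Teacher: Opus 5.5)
\textbf{Verdict: there is a genuine gap in Case~2.} Your Case~1 is exactly the paper's argument. Case~2 breaks down at precisely the step you flag, and your claim that the subset $I=M_a$ ``must then be checked using $x_0\neq 0$'' is false.

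Write $Q_a,T_a\subseteq M_a$ for the numeric and symbolic columns of your $A=\bar L[M_a,M_a]$, and $\psi$ for the surplus function of $A^{(\iota\leftarrow x_0)}$. For $\iota\in T_a$ one computes $\psi(M_a)=\rank[\,\bar L[M_a,Q_a]\mid x_0\,]-|Q_a|-1$. This equals $-1$ as soon as $x_0\in\operatorname{colspan}(\bar L[M_a,Q_a])$. In that situation the entry really vanishes: $A^{-1}x_0$ is then supported on $Q_a$, so $e_\iota^\top A^{-1}x_0=0$.

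So the condition you need is not $Bv\neq 0$. It is $BC^{-1}e_\eta\notin\operatorname{colspan}(\bar L[M_a,Q_a])$ for every $\eta\in M_b$, which also covers $\iota\in Q_a$. Your Stage~1 does not give this, and your appeal to (CCF2)/(CCF5) is only announced, not carried out. Note also that (CCF5) is not the relevant tool, since clearing one off-diagonal block does not refine the partition.

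\textbf{The missing idea and how your route closes.} The missing step is the one on which the paper's proof turns. Suppose $\operatorname{colspan}(B)\subseteq\operatorname{colspan}(\bar L[M_a,Q_a])$. Then column operations with the numeric columns $Q_a$ clear $B$. They leave the diagonal blocks unchanged, and they only touch rows of blocks $M_c\preceq M_a\prec M_b$, so no incomparable pair acquires a nonzero block. The result is a CCF with $\bar L[M_a,M_b]=\mathbf 0$, contradicting the nonvanishing clause of (CCF2) for $M_a\prec' M_b$.

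With this in hand, your route does close:
\begin{enumerate}
\item Choose $w$ with $w^\top\bar L[M_a,Q_a]=\mathbf 0$ and $\beta^\top:=w^\top B\neq\mathbf 0$. Then $\beta$ is numeric and supported on the numeric columns of $M_b$.
\item Your row-replacement surplus argument for $C$ works as intended. Let $\psi'$ be the surplus function of $C$ with row $\eta$ replaced by $\beta^\top$. Subsets not containing $\eta$ are unchanged. For $\eta\in I\neq\{\eta\}$ one has $\psi'(I)\ge\varphi_C(I\setminus\{\eta\})-1\ge 0$, while $\psi'(\{\eta\})=\rank\beta-1=0$. Hence $\beta^\top C^{-1}$ is fully dense.
\item Thus $w^\top BC^{-1}e_\eta\neq 0$ for all $\eta$, which is exactly the required non-membership. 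Your check at $I=M_a$ then succeeds for every $\iota$.
\end{enumerate}
Do this over the field generated over $\mathbb K$ by the variables of $C$ rather than by specializing, since $\mathbb K$ may be finite.

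\textbf{Comparison with the paper.} The paper uses the same column-span dichotomy only to produce one nonzero entry $(\bar L^{-1})_{j_*\eta_*}$ with $j_*\in T_a$. It then spreads this over the whole block by two applications of the Transitivity Lemma~\ref{lem:transition}, using Lemma~\ref{lem:symrow} to find a variable in row $\eta_*$. The repaired version of your argument replaces these transitivity steps by Cramer's rule and Murota's rank formula.
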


\begin{proof}
First, we assume that the off-diagonal block $\bar L[M_a,M_b]$ contains a variable $t_{ij}$ for some $i \in M_a, j \in M_b$. The Transitivity
Lemma~\ref{lem:transition} with $t_{ij}$ yields for any
$\iota\in M_{a},\eta\in M_{b}$
\[
-\partial_{t_{ij}}(\bar L^{-1})_{\iota\eta} =(\bar L^{-1})_{\iota i}\,(\bar L^{-1})_{j\eta}.
\]
Since $\iota, i \in M_{a}$ and $j, \eta \in M_{b}$, both factors on the right hand side are nonzero by Thm.~\ref{thm:murotainv}, which implies that $(\bar L^{-1})_{\iota \eta} \neq 0$ for any $\iota\in M_{a},\eta\in M_{b}$.

In the rest  of the proof, we assume that $\bar L[M_a,M_b]$ does not contain any variable $t_{ij}$. Note that if, for some $j_*\in M_b$, the $j_*$-th column of $\bar L$ contains a symbolic variable, then it is a symbolic
column, so all its nonzero entries are variables. As $\bar L[M_a,M_b]$ contains no variable by assumption, $\bar L[M_a,\{j_*\}]$ is the zero vector.

Let $Q_a \subseteq M_a, T_a \subseteq M_a$ be the set of column indices corresponding to the numeric, resp. symbolic columns of $\bar L$.
We distinguish between two subcases. First, assume that 
\begin{align}
\label{Eq:ProofInverseColspan}
\operatorname{colspan} (\bar L[M_a,M_b])\subseteq\operatorname{colspan}(\bar L[M_a,Q_a]).
\end{align}
Thus, using column operations we eliminate the nonzero entries of $\bar L[M_a,M_b]$ using $\bar L[M_a,Q_a]$. The resulting matrix $\bar{L}'$ is a combinatorial canonical form of $\bar L$ with  $\bar{L}'[M_a,M_b] = 0$, which contradicts the condition (CCF2) in Thm.~\ref{thm:ccf}.

Next, we investigate the case when \eqref{Eq:ProofInverseColspan} does not hold.
We claim that there exist $j_* \in T_a, \eta_* \in M_b$ such that $(\bar L^{-1})_{j_* \eta_*} \neq 0$. Note that
\begin{align}
\label{Eq:ProofInverseColspan2}
 \operatorname{span}\{\bar L[M_a,M_a]e_i:i\in Q_a\}=\operatorname{colspan}(\bar L[M_a,Q_a]).
\end{align}
If $\bar L[M_a,M_a]^{-1}\bar L[M_a,M_b]$ were supported only on the
rows indexed by $Q_a$, then each of its columns 
would be contained in $\operatorname{span}\{e_i:i\in Q_a\}$. Multiplying by $\bar L[M_a,M_a]$ from the left, we get that the columns of
$$\bar L[M_a,M_a] \bar L[M_a,M_a]^{-1}\bar L[M_a,M_b] = \bar L[M_a,M_b]$$
are contained in~\eqref{Eq:ProofInverseColspan2}. Thus,~\eqref{Eq:ProofInverseColspan} holds, contradicting our assumption.
Therefore \[\bar L^{-1}[M_a,M_a] \bar L[M_a,M_b]\] has a nonzero entry in some row $j_*\in M_a \setminus Q_a = T_a$. By Prop.~\ref{Prop:InverseCCFformula}, we have
$$\bar L^{-1}[M_a,M_b]=(\bar L^{-1}[M_a,M_a]  \bar L[M_a,M_b])(-\bar L^{-1}[M_b,M_b]).$$ 
Since $-\bar L^{-1}[M_b,M_b]$ is invertible, row $j_*$ of $\bar L^{-1}[M_a,M_b]$ is also nonzero:
there exists an entry $(\bar L^{-1})_{j_*\eta_*}\neq 0$ with $j_*\in T_a$ and $\eta_* \in M_b$, proving our claim. 

In the next part of the proof, we show that the entire $j_*$ row of $\bar L^{-1}[M_a,M_b]$  is fully dense. If $|M_{b}|=1$, $\bar L^{-1}_{j_*\eta_*}\neq 0$ by our previous argument, and there is nothing to prove.
If $|M_{b}|\ge2$, then Lemma~\ref{lem:symrow} yields an entry $t_{\eta_* h}$ in the  $\eta_*$-row of $\bar L[M_b,M_b]$, for at least one column $h \in M_b$. Applying the Transitivity Lemma~\ref{lem:transition} to $\bar L$ at $t_{\eta_* h}$, for every $\eta\in M_{b}$,
\[
-\partial_{t_{\eta_* h}} (\bar L^{-1})_{j_*\eta}=(\bar L^{-1})_{j_*\eta_*}\,(\bar L^{-1})_{h\eta}
\neq0.
\]
The first factor is nonzero by construction, and the second factor is nonzero since it is contained in a diagonal block of $\bar L^{-1}$, note that $h,\eta \in M_{b}$.
Thus the $j_*$-row of $\bar L^{-1}[M_a,M_b]$ is fully dense. 

In the last part of the proof, we show that the whole block $\bar L^{-1}[M_a,M_b]$ is fully-dense. Since $j_*\in T_a$, there exists $i_* \in M_a$ such that $\bar L_{i_*j_*} = t_{i_*j_*}$, otherwise $\bar L[M_a,M_a]$ would have a zero column, contradicting (CCF3).
Applying the Transitivity Lemma~\ref{lem:transition}, for every
$\iota\in M_{a},\eta\in M_{b}$, yields
\[
-\partial_{t_{i_*j_*}} (\bar  L^{-1})_{\iota\eta}=(\bar L^{-1})_{\iota i_*}\,(\bar L^{-1})_{j_*\eta}
\neq0.
\]
Since $\iota, i_*\in M_{a}$, the first factor is nonzero. The second factor is nonzero, since it is contained in the $j_*$-th row of $\bar L^{-1}[M_a,M_b]$. We conclude that 
$(\bar L^{-1})_{\iota\eta}\neq 0$ for all $(\iota,\eta)$, hence $\bar L^{-1}[M_a,M_b]$ is fully dense.
\end{proof}

\color{black}

We are now ready for the induction step in the proof of Thm.~\ref{thm:ccfinv}.
\proof[Proof of Thm.~\ref{thm:ccfinv}]
By Prop.~\ref{Prop:InverseCCFformula}, we have that $\bar L^{-1}[M_{a},M_{b}]=\mathbf{0}$ if $a > b$, or $M_a$ and $M_b$ are not comparable with respect to $\preceq$. 
In the rest of the proof we assume that $M_a \preceq M_b$ and prove by induction on the length of the shortest directed path $M_{a}\leadsto M_{b}$ in the associated CCF directed graph $\mathcal{D}_{\CCF}(\bar L)$ that $\bar L^{-1}[M_{a},M_{b}]$ is fully dense. 

For a path of length zero ($a=b$), the diagonal block 
$\bar L[M_{a},M_{a}]$ is LM-irreducible. Thus, $\bar L^{-1}[M_{a},M_{a}] = \bar L[M_{a},M_{a}]^{-1}$ is fully-dense by Thm.~\ref{thm:murotainv}. The case when the shortest path has length one follows from Prop.~\ref{Prop:DenseCCFBlockCovering}.

 Assume that $\bar L^{-1}[M_a,M_b]$ is fully dense for each $a,b \in [p]$ if there exists a directed path between $M_a$ and $M_b$ in the CCF directed graph $\mathcal{D}_{\CCF}(\bar{L})$ of length at most $k-1$ for some $k \geq 2$. 
 Let $a,b \in [p]$ such that between $M_a$ and $M_b$ the shortest directed path $M_a = M_{a_0} \prec' M_{a_1} \prec' \dots \prec' M_{a_{k-1}} \prec' M_{a_k} = M_b$ has length $k$. 

For each $\ell,\ell' \in \{0,\dots,k-1\}, \,\ell \leq \ell'$ or $\ell,\ell' \in \{1,\dots,k\}, \,\ell \leq \ell'$, the shortest path between $M_{a_\ell}$ and $M_{a_{\ell'}}$ has length at most $k-1$, thus these blocks $\bar L^{-1}[M_{a_\ell},M_{a_{\ell'}}]$ are fully dense by the induction hypothesis.
We show that the block $\bar L^{-1}[M_{a_0},M_{a_{k}}]$ is also fully dense by distinguishing two cases similar to the proof of Prop.~\ref{Prop:DenseCCFBlockCovering}.

If $\bar L[M_{a_{k-1}},M_{a_{k-1}}]$ carries a symbol $t_{ij}$ with $i,j \in M_{a_{k-1}}$, then applying the Transitivity Lemma~\ref{lem:transition} to $\bar L$ with differentiation with respect to $t_{ij}$ yields, for any $\iota\in M_{a_0}$, $\eta\in M_{a_k}$
\[
-\partial_{t_{ij}} (\bar L^{-1})_{\iota\eta}= (\bar L^{-1})_{\iota i}\,(\bar L^{-1})_{j\eta},
\]
where $(\bar L^{-1})_{\iota i}$ is an entry of the dense block $\bar L^{-1}[M_{a_0},M_{a_{k-1}}]$ and
$(\bar L^{-1})_{j \eta}$ an entry of the dense block $\bar L^{-1}[M_{a_{k-1}},M_{a_k}]$. Since these both are
nonzero by induction hypothesis, we have that the block $\bar L^{-1}[M_{a_0},M_{a_{k}}]$ is fully dense. 
Here, it is important that the case of shortest paths of length one has already been handled separately in Prop.~\ref{Prop:DenseCCFBlockCovering}. This guarantees that 
$\bar L^{-1}[M_{a_{k-1}},M_{a_k}]$ is fully dense.

If $ \bar L[M_{a_{k-1}},M_{a_{k-1}}]$ does not carry a variable $t_{ij}$, then by Lemma~\ref{lem:symrow} $\vert M_{a_{k-1}} \vert = 1$.
Thus, $\bar L[M_{a_{k-1}},M_{a_k}]$ is a single row. Moreover, $\bar L[M_{a_{k-1}},M_{a_{k-1}}]$ is an invertible element in $\mathbb{K}$.  Since $M_{a_{k-1}} \prec' M_{a_k}$, Thm.~\ref{thm:ccf} implies that $\bar L[M_{a_{k-1}},M_{a_k}] \neq \mathbf{0}$. 
If all the nonzero entries of $\bar L[M_{a_{k-1}},M_{a_k}]$ are elements of $\mathbb{K}$, then by using column operations on these columns and that $\bar L[M_{a_{k-1}},M_{a_{k-1}}] \neq 0$ we can achieve a new CCF of $\bar L$, where  $\bar L[M_{a_{k-1}},M_{a_k}] = \mathbf{0}$, which is a contradiction.
Thus, the block $\bar L[M_{a_{k-1}},M_{a_k}]$ contains a symbolic entry $t_{ij}, \; i \in M_{a_{k-1}}, j \in M_{a_{k}}$. 
Applying the Transitivity Lemma~\ref{lem:transition} to $\bar L$ at $t_{ij}$ yields for $\iota \in M_{a_0}, \eta \in M_{a_{k}}$
\begin{align}
\label{Eq:ProofInverseDenseBlocks2}
-\partial_{t_{ij}}(\bar L^{-1})_{\iota\eta} =(\bar L^{-1})_{\iota i}\,(\bar L)^{-1}_{j \eta}.
\end{align}
Since $\iota \in M_{a_0}$, $i \in M_{a_{k-1}}$, and $j,\eta \in M_{a_{k}}$, by the inductive hypothesis the factors on the right-hand side in \eqref{Eq:ProofInverseDenseBlocks2} are nonzero. This completes the proof. 
\endproof

\begin{example}
We illustrate Thm.~\ref{thm:ccfinv} on our running example.
Consider the matrix $L'$ from Example~\ref{Ex:RunningStart}. By Example~\ref{Ex:runningLattice}, $L'$ is in CCF with row/column partition $M_1 =\{1,2\}, M_2 =\{3,4\}, M_3 =\{5,6\}$ and the only relation between the blocks is $M_1 \prec' M_2$.
Thm.~\ref{thm:ccfinv} predicts that exactly the four blocks indexed by the pairs
$M_1\preceq M_1$, $M_1\preceq M_2$, $M_2\preceq M_2$ and $M_3\preceq M_3$ are nonzero, and that
each of them is fully dense. A direct computation confirms this:
The inverse of $L'$ is of the form
\begin{align*}
    L'^{-1} = \tfrac{1}{\det(L')} \begin{pmatrix}
        \tilde{L}[M_1,M_1] &  \tilde{L}[M_1,M_2] & \mathbf{0} \\
        \mathbf{0} &   \tilde{L}[M_2,M_2]  & \mathbf{0}  \\
        \mathbf{0} & \mathbf{0} &   \tilde{L}[M_3,M_3] 
    \end{pmatrix},
\end{align*}
where the nonzero blocks are given as
\footnotesize
   \begin{align*}
       &\tilde{L}[M_1,M_1] =   \begin{pmatrix}
           -t_{32}t_{53} + t_{32}t_{63} - 2t_{42}t_{53} + 2t_{42}t_{63}  &                      2t_{32}t_{53} - 2t_{32}t_{63} + 4t_{42}t_{53} - 4t_{42}t_{63} \\[0.5em]
t_{21}t_{32}t_{53} - t_{21}t_{32}t_{63} + 2t_{21}t_{42}t_{53} - 2t_{21}t_{42}t_{63}  &  -t_{11}t_{32}t_{53} + t_{11}t_{32}t_{63} - 2t_{11}t_{42}t_{53} + 2t_{11}t_{42}t_{63}
       \end{pmatrix}, \\[0.7em]
            &\tilde{L}[M_1,M_2] =    \begin{pmatrix}
                -2t_{22}t_{53} + 2t_{22}t_{63} + t_{42}t_{53} - t_{42}t_{63}            &              -4t_{22}t_{53} + 4t_{22}t_{63} - t_{32}t_{53} + t_{32}t_{63} \\[0.5em]
t_{11}t_{22}t_{53} - t_{11}t_{22}t_{63} - t_{21}t_{42}t_{53} + t_{21}t_{42}t_{63}  & 2t_{11}t_{22}t_{53} - 2t_{11}t_{22}t_{63} + t_{21}t_{32}t_{53} - t_{21}t_{32}t_{63}
            \end{pmatrix},\\[0.7em]
              & \tilde{L}[M_2,M_2] =     \begin{pmatrix}
-t_{11}t_{53}+t_{11}t_{63}+2t_{21}t_{53}-2t_{21}t_{63}
&
-2t_{11}t_{53}+2t_{11}t_{63}+4t_{21}t_{53}-4t_{21}t_{63}
\\[0.5em]
-t_{11}t_{42}t_{53}+t_{11}t_{42}t_{63}
+2t_{21}t_{42}t_{53}-2t_{21}t_{42}t_{63}
&
t_{11}t_{32}t_{53}-t_{11}t_{32}t_{63}
-2t_{21}t_{32}t_{53}+2t_{21}t_{32}t_{63}
\end{pmatrix},\\[0.7em]
              &   \tilde{L}[M_3,M_3] =  \begin{pmatrix}
-t_{11}t_{32}-2t_{11}t_{42}+2t_{21}t_{32}+4t_{21}t_{42}
&
t_{11}t_{32}+2t_{11}t_{42}-2t_{21}t_{32}-4t_{21}t_{42}
\\[0.5em]
t_{11}t_{32}t_{63}+2t_{11}t_{42}t_{63}
-2t_{21}t_{32}t_{63}-4t_{21}t_{42}t_{63}
&
-t_{11}t_{32}t_{53}-2t_{11}t_{42}t_{53}
+2t_{21}t_{32}t_{53}+4t_{21}t_{42}t_{53}
\end{pmatrix}.
   \end{align*}
\end{example}

\begin{example}
\label{Ex:chain}
To illustrate the induction step in the proof of Thm.~\ref{thm:ccfinv}, consider the
following LM-matrix in CCF: 
\[
\bar L=\begin{pmatrix}
t_{11} & 2 & 0 & 1 & 0 & 0\\
t_{21} & 1 & t_{23} & 0 & 0 & 0\\
0 & 0 & t_{33} & 2 & t_{35} & 0\\
0 & 0 & t_{43} & -1 & 0 & 0\\
0 & 0 & 0 & 0 & t_{55} & 1\\
0 & 0 & 0 & 0 & t_{65} & 1
\end{pmatrix}.
\]
A direct computation shows that the lattice of minimizers of $\varphi_{\bar L}$ is the chain
$\emptyset\subsetneq\{5,6\}\subsetneq\{3,4,5,6\}\subsetneq[6]$, which yields the row/column
partition $M_1=\{1,2\},\,M_2=\{3,4\},\,M_3=\{5,6\}$ with the total order
$M_1\prec' M_2\prec' M_3$. In particular $M_1\preceq M_3$, and $\mathcal D_{\CCF}(\bar L)$ is
the directed path $M_1\to M_2\to M_3$.

Although $M_1\preceq M_3$, the two blocks are \emph{not} directly coupled:
$\bar L[M_1,M_3]=\mathbf 0$. Nevertheless, Thm.~\ref{thm:ccfinv} asserts that
$\bar L^{-1}[M_1,M_3]$ is fully dense, and indeed
\[
\bar L^{-1}[M_1,M_3]
=\frac{t_{35}}{(t_{11}-2t_{21})(t_{33}+2t_{43})(t_{55}-t_{65})}
\begin{pmatrix}
-2t_{23}+t_{43} & 2t_{23}-t_{43}\\[2pt]
t_{11}t_{23}-t_{21}t_{43} & -\,t_{11}t_{23}+t_{21}t_{43}
\end{pmatrix},
\]
all of whose entries are nonzero.

This is precisely the phenomenon handled by the induction step ($k=2$): the density of $\bar L^{-1}[M_1,M_3]$ is not inherited from a direct block, but is transported along the path $M_1\to M_2\to M_3$. 
Concretely, differentiating with respect
to the variable $t_{33}$ of the middle diagonal block $\bar L[M_2,M_2]$ and applying the
Transitivity Lemma~\ref{lem:transition} expresses $\partial_{t_{33}} (\bar L^{-1})_{\iota \eta}$ as a product of entries of the already-dense length-one blocks $\bar L^{-1}[M_1,M_2]$ and $\bar L^{-1}[M_2,M_3]$.
\end{example}

\subsection{Linearly transformed sparse generic matrices}
\label{Sec:LinTransSparseGeneric}
In this section, we introduce \emph{linearly transformed sparse generic matrices}. These are matrices of the form $ST$, where $S \in \mathbb{K}^{m \times n}$ and $T \in \mathbb{K}(t_{ij})^{n \times m}$ is a sparse generic matrix. It is easy to see that the $j$-th column of $ST$ contains only variables $t_{ij}$ for $i \in [n]$, since
\[(ST)_{kj} = \sum_{i =1}^{n} S_{ki} T_{ij}, \qquad \text{ for } k,j \in [m].\]
In particular, this shows that $ST$ is column-graded in the sense of \cite[Section 3]{CDNG}.

In what follows, by relating linearly transformed sparse generic matrices to LM-matrices, we give a combinatorial characterization of the irreducibility of $\det(ST)$. In the case of $\mathbb{K} = \mathbb{R}$, this connection is well-known in the reaction network literature, see e.g. \cite[Prop.~2.1 and Lemma~2.2]{FM15},  or \cite[Thm.~0 in the SI]{HOM:25}.

We assume that $m \leq n$ and $S$ has rank $m$. A matrix $Q \in \mathbb{K}^{n \times (n-m)}$ is called a \emph{Gale dual} of~$S$, if $SQ = \mathbf{0}$ and $\rank(Q) = n-m$, or in other words, the columns of $Q$ form a basis of $\ker(S)$.

\begin{prop}
\label{Prop:CSGaledualTrick}
    Let $T \in \mathbb{K}(t_{ij})^{n\times m}$ be a sparse generic matrix and $S \in  \mathbb{K}^{m\times n}$ such that $\rank S = m$ and $m \leq n$. For any choice of a Gale dual matrix $Q\in\mathbb K^{n\times(n-m)}$ of $S$, there exists a constant $\delta \in \mathbb{K} \setminus \{0\}$ such that
    \begin{align}
    \label{Eq:Prop:GaleDualTrick}
    \det (ST) = \delta \det([\,T\mid Q\,]).
    \end{align}
  Moreover, if $\det(ST)\neq 0$, then it is a homogeneous polynomial of degree $m$, which is irreducible if and only if $[\,T\mid Q\,]$ is essentially LM-irreducible.
\end{prop}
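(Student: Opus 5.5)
The plan is to extend $S$ to an invertible matrix and compute $\det([\,T\mid Q\,])$ after multiplying on the left. Since $\rank S=m$, choose any $C\in\mathbb K^{(n-m)\times n}$ such that
\[
G:=\begin{pmatrix} S\\ C\end{pmatrix}\in\mathbb K^{n\times n}
\]
satisfies $GQ$ having invertible lower block. A concrete choice: because the columns of $Q$ form a basis of $\ker S$, the matrix $G$ is invertible precisely when $CQ$ is invertible. We can therefore pick $C$ with $CQ=I_{n-m}$, for example $C=(Q^\top Q)^{-1}Q^\top$ when $Q^\top Q$ is invertible, or more robustly any left inverse of the full-column-rank matrix $Q$. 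Then
\[
G[\,T\mid Q\,]=\begin{pmatrix} ST & SQ\\ CT & CQ\end{pmatrix}=\begin{pmatrix} ST & \mathbf 0\\ CT & I_{n-m}\end{pmatrix}.
\]
This is block lower triangular, so $\det(G)\det([\,T\mid Q\,])=\det(ST)$, giving $\delta=\det G\neq0$. It remains to check that $G$ is invertible. If $G x=0$, then $Sx=0$, so $x=Qy$, and then $CQy=y=0$. Independence of the choice of $Q$ is automatic: for each Gale dual we obtain its own $\delta$.

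For the second part, homogeneity comes from the column structure. Each entry of $ST$ is a $\mathbb K$-linear form in the variables, namely $(ST)_{kj}=\sum_i S_{ki}t_{ij}$. Hence every term of the Leibniz expansion of $\det(ST)$ is a product of $m$ linear forms, so $\det(ST)$ is homogeneous of degree $m$ whenever it is nonzero. One can equally see this from $\det([\,T\mid Q\,])$, where each term uses exactly $m$ symbolic columns.

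For irreducibility, I would argue as follows. Since $\delta$ is a nonzero unit of $\mathbb K$, $\det(ST)$ is irreducible if and only if $\det([\,T\mid Q\,])$ is. If $\det\neq0$, then $[\,T\mid Q\,]$ is a nonsingular LM-matrix. Its symbolic part is nonempty when $m\ge1$, because the degree-$m$ polynomial is nonconstant; the degenerate case $m=0$ can be excluded or treated by convention. Thm.~\ref{thm:murotairr} then gives that irreducibility is equivalent to $[\,T\mid Q\,]$ being essentially LM-irreducible.

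The main obstacle is reconciling this with the introductory Prop.~\ref{Lemma:IntroGaleDual}, which phrases the condition as \emph{exactly one diagonal block containing variables}. Definition~\ref{Def:lmirr} instead additionally demands that all other blocks be $1\times1$. To close this gap, I would invoke Lemma~\ref{lem:symrow}: every block of size at least $2$ contains a symbol, so a block without variables must be $1\times1$. Also, with nonempty symbolic part, some block contains variables, so "at most one" equals "exactly one". Both formulations therefore coincide.
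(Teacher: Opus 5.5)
Your proof is correct, but for the identity $\det(ST)=\delta\det([\,T\mid Q\,])$ it takes a different route from the paper. The paper expands $\det(ST)=\sum_{J}\det S[[m],J]\,\det T[J,[m]]$ by Cauchy--Binet. It then replaces each maximal minor of $S$ by $\delta\operatorname{sgn}(\tau_J)\det Q[J^c,[n-m]]$, using the cited Gale-duality identity for complementary minors. Finally it recognizes the result as the Laplace expansion of $\det([\,T\mid Q\,])$ along its first $m$ columns, and reads off homogeneity from the minors $\det T[J,[m]]$.

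You instead complete $S$ to the invertible matrix $G=\bigl(\begin{smallmatrix} S\\ C\end{smallmatrix}\bigr)$ with $CQ=I_{n-m}$ and block-triangularize. This is self-contained, needs no sign bookkeeping, and gives the explicit constant $\delta=\det G$. In effect it proves the complementary-minor identity the paper quotes: $GQ=\bigl(\begin{smallmatrix}\mathbf 0\\ I\end{smallmatrix}\bigr)$ says that $Q$ consists of the last $n-m$ columns of $G^{-1}$. What the paper's route gives in exchange is a term-by-term correspondence between the two determinants, indexed by $m$-subsets $J$ of rows. The same Cauchy--Binet expansion is also reused later, to see that $\det(SR)\neq 0$ forces $\rank S=|\mathbf M|$.

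The irreducibility step is the same in both. You check that the symbolic part is nonempty, as Thm.~\ref{thm:murotairr} requires; the paper leaves this implicit. You are also right to single out $m=0$. There $\det(ST)=1$ is a unit, while $[\,T\mid Q\,]=Q$ is (vacuously) essentially LM-irreducible, so the equivalence genuinely needs $m\ge 1$.

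A minor point: over a general field $Q^\top Q$ can be singular even though $Q$ has full column rank (e.g.\ $Q=(1,1)^\top$ over $\mathbb{F}_2$). So an arbitrary left inverse of $Q$ is the right choice for $C$, as you already note.
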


\begin{proof} 
Using the Cauchy-Binet formula (see e.g. \cite[Theorem 2.3]{Prasolov1994}, \cite[(12.11)]{JoswigTheobald2013}), we~have 
\begin{align}
\label{Eq:Proof:CauchyBinet}
    \det(ST) = \sum_{J \in \binom{[n]}{m}} \det(S[[m],J]) \det(T[J,[m]]).
\end{align}
The Leibniz formula for $\det(T[J,[m])$ yields that each nonzero monomial in $\det(T[J,[m])$  has degree $m$, from which it follows that $\det(ST)$ itself, if nonzero, is homogeneous of degree $m$.

In the second step of the proof, we replace the minors of $S$ in~\eqref{Eq:Proof:CauchyBinet} by the corresponding minors of its Gale dual $Q$.
For $J=\left\{j_1, \ldots, j_m\right\} \subseteq[n]$ with $j_1<\cdots<j_m$ and $J^c=\left\{k_1, \ldots, k_{n-m}\right\}, k_1<\cdots<k_{n-m}$, we denote by $\operatorname{sgn}\left(\tau_J\right) \in\{-1,1\}$ the signature of the permutation that sends $(1, \ldots, n)$ to $\left(j_1, \ldots, j_m, k_1, \ldots, k_{n-m}\right)$. Since $Q$ is the Gale dual of $S$, by \cite[Theorem 12.16]{JoswigTheobald2013} there exists $\delta \in \mathbb{K} \setminus \{0\}$ such that
\[  \det(S[[m],J])  = \delta \operatorname{sgn}(\tau_J)\det(Q[J^c,[n-m]]).\]
Now, using Laplace expansion \cite[Theorem 2.4.1]{Prasolov1994} we have that 
\begin{align*}
     \det( [T\mid Q]) =  \sum_{J \in \binom{[n]}{m}} \operatorname{sgn}(\tau_J) \det( T[J,[m]]) \det(Q[J^c,[n-m]]) 
\end{align*}
finishing the proof of~\eqref{Eq:Prop:GaleDualTrick}. Since, by~\eqref{Eq:Prop:GaleDualTrick}, the polynomials $\det(ST)$ and $\det([\,T\,\vert\,Q\,])$ are scalar multiples of each other, one is irreducible if and only if the other is. The last assertion then follows from Thm.~\ref{thm:murotairr}.
\end{proof}

\begin{remark}
\label{Remark:1genericVSLM}
Since $S$ and $Q$ are Gale dual, by \cite[Proposition 2.1.9]{Oxley} we have 
    \begin{align*}
       \rank Q[I,[n-m]] = \rank S[[m],I^c] + \vert I \vert -m, \quad \text{for each  } I \subseteq [n]
    \end{align*}
    Thus, the LM-surplus function of $[\,T\,\vert\, Q\,]$ is determined by $S$ and $T$. In particular, whether the matrix $[\,T\,\vert\, Q\,]$ in Prop.~\ref{Prop:CSGaledualTrick} is LM-irreducible is completely determined by the combinatorics of $S$ and $T$.
\end{remark}

\color{black}

We conclude this section by relating linearly transformed sparse generic matrices to $1$-generic matrices from \cite{Eisenbud1987Resiliency}. This exploration is of purely algebraic interest and independent of Sec.~\ref{sec:RN} about reaction networks.

Recall that a matrix $X\in \mathbb{K}(t_{ij})^{m\times p}$, whose entries are linear forms in the variables $t_{ij}$, is $1$-generic if for all $u \in \mathbb{K}^m \setminus \{\mathbf{0}\}, v\in \mathbb{K}^p \setminus \{\mathbf{0}\}$ we have $u^\top X v \neq 0$.
\begin{lemma}
    \label{Lemma_SRvs1Generic}
   Let $T \in \mathbb{K}(t_{ij})^{n\times p}$ be a sparse generic matrix and $S \in \mathbb{K}^{m\times n}$. The matrix $ST$
    is $1$-generic if and only if for all $j \in [p]\colon$
    \begin{align*}
        \rank S\big[\,[m]\,,\,\supp(T[[n],\{j\}])\,\big] = m.
    \end{align*}
 In particular, if $m > n$ then $ST$ is not $1$-generic.
\end{lemma}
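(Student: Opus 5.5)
We unwind the definition of $1$-genericity directly. For $u\in\mathbb K^m$ and $v\in\mathbb K^p$ we have
\[
u^\top S T v=\sum_{j\in[p]} v_j \sum_{i\in\supp(T[[n],\{j\}])} (u^\top S)_i\, t_{ij}.
\]
Since the $t_{ij}$ are distinct algebraically independent variables, this linear form vanishes if and only if $v_j (u^\top S)_i=0$ for every $j\in[p]$ and every $i\in\supp(T[[n],\{j\}])$. Thus $ST$ fails to be $1$-generic exactly when there exist $u\neq\mathbf 0$ and $v\neq \mathbf 0$ satisfying this system. Choosing $v=e_j$ is enough, and conversely any nonzero $v$ has some $v_j\neq 0$. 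So the failure condition is equivalent to the existence of $j\in[p]$ and $u\neq\mathbf 0$ with $u^\top S[[m],\supp(T[[n],\{j\}])]=\mathbf 0$.

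The existence of such a $u$ means precisely that the rows of the $m\times|\supp|$ submatrix are linearly dependent. This is equivalent to $\rank S[[m],\supp(T[[n],\{j\}])]<m$. Taking the contrapositive gives the stated equivalence.

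For the final claim, the submatrix $S[[m],\supp(T[[n],\{j\}])]$ has at most $n$ columns, so its rank is at most $n<m$. This needs $p\geq 1$; if $p=0$ the statement is vacuous or conventional, and we will remark on this.

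There is no real obstacle here. The only point requiring care is the justification that a linear form in distinct independent variables is zero exactly when each coefficient is zero. This is where sparse genericity, meaning that every nonzero entry of $T$ is a different variable, is used.
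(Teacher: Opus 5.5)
Your proof is correct and follows essentially the same route as the paper. Both arguments expand $u^\top STv$ as a linear form in the distinct variables $t_{ij}$ with coefficients $(u^\top S)_i v_j$, reduce to single columns $j\in\supp(v)$, and read off the rank condition on $S[[m],\supp(T[[n],\{j\}])]$; your remark that the final claim needs $p\ge 1$ (for $p=0$ the matrix $ST$ is vacuously $1$-generic) is a valid fine point that the paper leaves implicit.
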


\begin{proof}
    By definition $ST$ is $1$-generic if and only if for all $u \in \mathbb{K}^m \setminus \{\mathbf{0}\}$ and $ v\in \mathbb{K}^p \setminus \{\mathbf{0}\}$ we have
    \begin{align*}
        u^\top ST v = \sum_{i =1}^n \sum_{j \in \supp(T[\{i\},[p])]}  (u^\top S)_i v_j t_{ij} \neq 0,
    \end{align*}
    which happens if and only if for each  $u \in \mathbb{K}^m \setminus \{\mathbf{0}\}$ and $v\in \mathbb{K}^p \setminus \{\mathbf{0}\}$ there exists $i \in [n]$ and $ j \in \supp(T[\{i\},[p]]) \cap \supp(v)$ such that $(u^\top S)_i  v_j \neq 0$. Using that $j \in \supp(T[\{i\},[p]])$ is equivalent to $i \in \supp(T[[n],\{j\}])$, we conclude that $ST$ is $1$-generic if and only if for each $u \in \mathbb{K}^m \setminus \{\mathbf{0}\}$ and each $j \in [p]$ we have 
    \[u^\top S[\,[m] \,,\,\supp(T[[n],\{j\}])\,] \neq 0,\]
    which gives the desired result.
\end{proof}

\begin{prop}
\label{Prop:1genericVSLM}
   Let $T \in \mathbb{K}(t_{ij})^{n\times m}$ be a sparse generic matrix and $S \in  \mathbb{K}^{m\times n}$ such that $ST$ is 1-generic and $\det(ST) \neq 0$ (in particular $\rank S = m$ and $m \leq n$). For any choice of a Gale dual matrix $Q\in\mathbb K^{n\times(n-m)}$ of $S$, the matrix $[\,T\,\vert\, Q\,]$ is essentially LM-irreducible.
    If we additionally assume that $T$ has no zero rows and $S$ has no zero columns, then the matrix $[\,T\,\vert\, Q\,]$ is LM-irreducible.
\end{prop}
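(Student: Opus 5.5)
The plan is to work directly with the LM-surplus function of $L=[\,T\mid Q\,]$, using Remark~\ref{Remark:1genericVSLM} to rewrite it in terms of $S$ and $T$, and then use Lemma~\ref{Lemma_SRvs1Generic} to control its minimizers. Write $N(I)\subseteq[m]$ for the set of symbolic columns $j$ with $T_{ij}\neq 0$ for some $i\in I$. By~\eqref{Eq:surplus} and the Gale-duality rank identity of Remark~\ref{Remark:1genericVSLM},
\[
\varphi_L(I)=\rank S\bigl[[m],I^c\bigr]+|N(I)|-m .
\]
Since $\det(ST)\neq0$, Prop.~\ref{Prop:CSGaledualTrick} shows that $L$ is nonsingular, so $\min\varphi_L=0$. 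We then split according to whether $N(I)=[m]$.

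\emph{Case $N(I)=[m]$.} Here $\varphi_L(I)=\rank S[[m],I^c]$. This vanishes if and only if every column of $S$ indexed by $I^c$ is zero.

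\emph{Case $N(I)\neq[m]$.} Pick $j\notin N(I)$. Then $\supp(T[[n],\{j\}])\subseteq I^c$, and 1-genericity via Lemma~\ref{Lemma_SRvs1Generic} gives $\rank S[[m],I^c]=m$. Hence $\varphi_L(I)=|N(I)|$, which vanishes if and only if $I$ consists of zero rows of $T$.

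Let $Z_T$ be the set of zero rows of $T$ and $Z_S$ the set of zero columns of $S$. The two cases together say that the minimizers are exactly the sets $I\subseteq Z_T$ and the sets $I\supseteq [n]\setminus Z_S$.

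The second statement is now immediate. If $Z_T=Z_S=\emptyset$, the only minimizers are $\emptyset$ and $[n]$. Hence the row/column partition has a single block and $L$ is LM-irreducible by Definition~\ref{Def:lmirr}.

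For the first statement, I read off the block structure from a maximal chain in this lattice. We have $Z_T\cap Z_S=\emptyset$: a row $i$ that is zero in $T$ and whose column of $S$ is zero would make row $i$ of $L=[\,T\mid Q\,]$ equal to $(0\mid Q_{i\cdot})$, and $Q_{i\cdot}=0$ follows from Gale duality, since $e_i\in\ker S$ forces $e_i$ into the column span of $Q$. Then the intervals of the lattice are as follows. Below $Z_T$, the chain adds elements one at a time, giving singleton blocks $\{i\}$ with $i\in Z_T$. Between $Z_T$ and $[n]\setminus Z_S$ there is exactly one block, $M_*=[n]\setminus(Z_T\cup Z_S)$. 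Above $[n]\setminus Z_S$, the chain again gives singleton blocks $\{i\}$ with $i\in Z_S$. If $Z_T\cup Z_S=[n]$, then $M_*$ is empty and is simply dropped.

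It remains to check that every singleton block is a $1\times1$ matrix over $\mathbb K$, which is what essential LM-irreducibility in Definition~\ref{Def:lmirr} requires. For $i\in Z_T$ this is clear, because row $i$ of $L$ carries no variable. For $i\in Z_S$ I expect this to be the main obstacle, since row $i$ of $T$ may well contain variables. Two routes seem available.

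The first route is a rank count in a CCF $\bar L$. The column of the block $\{i\}$ is a column of the CCF, and the variables in row $i$ sit in symbolic columns. Column $j$ of $T$ has support not contained in $Z_S$, because $\rank S[[m],\supp T[[n],\{j\}]]=m\geq1$. So every symbolic column meets $M_*$ or $Z_T$, and these lie earlier in the chain. That would force every symbolic column into the block $M_*$, or else produce a nonzero entry below the diagonal, contradicting (CCF1).

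As a fallback, one can argue via irreducibility. Eisenbud's theorem~\cite[Theorem~1]{Eisenbud1987Resiliency} makes $\det(ST)$ irreducible, at least over an algebraically closed $\mathbb K$. Then Prop.~\ref{Prop:CSGaledualTrick} directly yields essential LM-irreducibility. I would present the combinatorial argument as the main proof and mention the Eisenbud route as a check.
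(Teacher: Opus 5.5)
Your argument for the second statement is the paper's: the same formula $\varphi_L(I)=\rank S[[m],I^c]+|N(I)|-m$ and the same case split on whether $N(I)=[m]$. Your fallback for the first statement (Eisenbud's theorem, then Prop.~\ref{Prop:CSGaledualTrick} and Thm.~\ref{thm:murotairr}) is exactly the paper's proof.

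Your main combinatorial route for the first statement is different. Your description $\mathcal L_{\min}(\varphi_L)=\{I\subseteq Z_T\}\cup\{I\supseteq[n]\setminus Z_S\}$ is correct, but one step is false: $Z_T\cap Z_S$ need not be empty. Take $n=2$, $m=1$, $S=(1\;\,0)$, $T=(t_{11},0)^\top$. Then $ST=(t_{11})$ is $1$-generic with nonzero determinant, yet $2\in Z_T\cap Z_S$. Your justification runs Gale duality backwards. If column $i$ of $S$ vanishes, then $e_i\in\ker S=\operatorname{colspan}(Q)$ forces $Q_{i\cdot}\neq 0$; by the rank identity of Remark~\ref{Remark:1genericVSLM}, deleting row $i$ lowers $\rank Q$ by one. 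In the example $Q=(0,1)^\top$ and $L=\operatorname{diag}(t_{11},1)$. So in general $Z_T\not\subseteq[n]\setminus Z_S$, and your picture of a chain running from $Z_T$ up to $[n]\setminus Z_S$ through the single block $M_*$ is wrong as stated.

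The repair keeps your approach. Read the blocks directly off the minimizer family:
\begin{itemize}
\item $\{k\}$ is a minimizer for $k\in Z_T$;
\item $[n]\setminus\{k\}$ is a minimizer for $k\in Z_S$;
\item every minimizer contains all or none of $M_*:=[n]\setminus(Z_T\cup Z_S)$.
\end{itemize}
Hence the blocks are the singletons $\{k\}$ for $k\in Z_T\cup Z_S$, together with $M_*$. The block $M_*$ is nonempty: every column of $T$ vanishes on $Z_T$, and by Lemma~\ref{Lemma_SRvs1Generic} its support is not contained in $Z_S$, so the support meets $M_*$.

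For $k\in Z_T$, overlap included, row $k$ carries no variable, so that $1\times1$ block lies in $\mathbb K$. For $k\in Z_S\setminus Z_T$, every minimizer containing $k$ contains $[n]\setminus Z_S\supseteq M_*$. So $\{k\}\preceq M_*$, and Lemma~\ref{Lemma:PartOrderpreserving} places $\{k\}$ before $M_*$ in every CCF. A symbolic column in the block $\{k\}$ would then have a nonzero entry in a row of $M_*$, below the diagonal, contradicting (CCF1). Phrasing this through $\preceq$ means you do not depend on one particular chain.

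With this fix, your route is a genuine alternative to the paper's. One computation of $\mathcal L_{\min}(\varphi_L)$ gives both statements and identifies the unique symbolic block explicitly as $M_*$. Combined with Thm.~\ref{thm:murotairr}, it even reproves the irreducibility of $\det(ST)$ without Eisenbud. The paper's proof is shorter, but it outsources the first statement to Eisenbud's primeness theorem.

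Your hedge about algebraic closure can be dropped in either route. By Lemma~\ref{Lemma_SRvs1Generic}, $1$-genericity of $ST$ is a rank condition on submatrices of $S$, so it is unchanged over $\overline{\mathbb K}$. Irreducibility of $\det(ST)$ over $\overline{\mathbb K}$ then implies irreducibility over $\mathbb K$.
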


\begin{proof}
By \cite[Theorem 1]{Eisenbud1987Resiliency}, $\det(ST)$ is irreducible, which implies that $\det([\,T\,\vert\, Q\,])$ is irreducible by Prop.~\ref{Prop:CSGaledualTrick}. Thus, $[\,T\,\vert\, Q\,]$ is essentially LM-irreducible by Thm.~\ref{thm:murotairr}.

In the rest of the proof we assume that $T$ has no zero rows, $S$ has no zero columns, and write $L = [\,T\,\vert \, Q\,]$.
It follows from Remark~\ref{Remark:1genericVSLM} that, for every $I \subseteq [n]$,
      \begin{align}
      \label{Eq:Proof:1genericvsLMirreducible}
        \varphi_L(I) = \rank S[[m],I^c]  -m + \vert \{j \in [m] \mid \text{there exists } i \in I \text{ with } T_{ij} \neq 0 \} \vert .
    \end{align}
    Our goal is to prove that $\varphi_L(I) > 0$ for each $\emptyset \subsetneq I \subsetneq [n]$, which gives that $L$ is LM-irreducible. For such $I$, we set 
    \[J(I) \coloneqq \{ k \in [m] \mid I \cap \supp(T[[n],\{k\}]) \neq \emptyset \}.\]
    In particular, we have $\varphi_L(I) = \rank S[[m],I^c]  -m + \vert J(I) \vert$. We distinguish two cases. If $J(I) \neq [m]$, then for $k \in [m] \setminus J(I)$, we have $\supp(T[[n],\{k\}]) \subseteq I^c$ by the construction of $J(I)$.
    Since $ST$ is $1$-generic, Lemma~\ref{Lemma_SRvs1Generic} implies that
    \[ m = \rank S[[m],\supp(T[[n],\{k\}])] \leq \rank S[[m],I^c]  \leq m.\]
    Using again~\eqref{Eq:Proof:1genericvsLMirreducible}, the assumption that $T$ has no zero rows and $I \neq \emptyset$, we conclude that $\varphi_L(I) > 0$.

    If $J(I) = [m]$, then~\eqref{Eq:Proof:1genericvsLMirreducible}  becomes $\varphi_L(I) = \rank S[[m],I^c]$, which is strictly greater than $0$, since $I^c \neq \emptyset$ and $S$ has no zero columns. 
    Hence $\mathcal{L}_{\min}(\varphi_{L}) = \{\emptyset,[n]\}$, so the row/column partition consists of a single block and $L$ is LM-irreducible.
    \end{proof}
    
We conclude this section with two examples showing that the assumptions in Prop.~\ref{Prop:1genericVSLM} are necessary for LM-irreducibility.

\begin{example}
    (a) Consider the matrices 
    \begin{align*}
        S = \begin{pmatrix}
            1 & 1
        \end{pmatrix}, \qquad T = \begin{pmatrix}
            t_{11} \\ 0
        \end{pmatrix}.
    \end{align*}
    The determinant $\det(ST) = t_{11}$ is irreducible, but the corresponding LM-matrix
     \begin{align*}
       [\,T\, \vert \, Q \, ] = \begin{pmatrix}
            t_{11} & 1 \\ 0 & -1
        \end{pmatrix}
    \end{align*}
    is only essentially LM-irreducible and not LM-irreducible. Thus, the assumption in Prop.~\ref{Prop:1genericVSLM} that $T$ has no zero rows is necessary.
    
      (b) To show that the assumption on $S$ is necessary, consider the matrices 
    \begin{align*}
        S = \begin{pmatrix}
            1 & 0
        \end{pmatrix}, \qquad T = \begin{pmatrix}
            t_{11} \\ t_{21}
        \end{pmatrix},
    \end{align*}
    for which $\det(ST) = t_{11}$ is irreducible, but the corresponding LM-matrix is not LM-irreducible,
     \begin{align*}
       [\,T\, \vert \, Q \, ] = \begin{pmatrix}
            t_{11} & 0 \\ t_{21} & 1
        \end{pmatrix} \cong \begin{pmatrix}
            1 & t_{21} \\ 0 & t_{11}
        \end{pmatrix}.
    \end{align*}
\end{example}
   \color{black}
\section{Reaction Networks} \label{sec:RN}
\subsection{Networks, kinetics and equilibria}\label{ssec:RN} For consistency of notation, we mostly adopt chemical terminology throughout.
A reaction network $\mathcal{N}$ is a pair of sets $\mathcal{N}=(\mathbf{M},\mathbf{E})$, where $\mathbf{M}=\{X_1,...,X_{|\mathbf{M}|}\}$ is the set of \emph{species}, and $\mathbf{E}$ is the set of \emph{reactions}. A reaction $\rho \in \textbf{E}$ is an ordered association between two distinct nonnegative linear combinations of the species:
\begin{equation*}\label{eq:reactionj}
   \sum_{m=1}^{|\mathbf{M}|} s^-_{m\rho} X_m\quad \underset{\rho}\longrightarrow \quad \sum_{m=1}^{|\mathbf{M}|} s^+_{m\rho} X_m,
\end{equation*}
where $s^-_{m\rho}, s^+_{m\rho} \in \mathbb{R}_{\ge0}$ are called \emph{stoichiometric coefficients}.  
If $s^-_{m\rho} > 0$, then the species $X_m$ is called an \emph{input} or \emph{reactant} of the reaction $\rho$. If $s^+_{m\rho} > 0$, then $X_m$ is an \emph{output} or \emph{product} of $\rho$.

Throughout the paper, we identify $\mathbf{E}$ with the set $\{1,\dots,\vert \mathbf{E} \vert \}$, $\mathbf{M}$ with the set $\{1,\dots,\vert \mathbf{M} \vert \}$ and use the symbols $\rho$ (resp. $m$) interchangeably to denote both reactions (resp. species) and their indices. With this slight abuse of notation, we define the $|\mathbf{M}|\times |\mathbf{E}|$ \emph{input matrix} $S^- \in \mathbb{R}_{\ge0}^{|\mathbf{M}|\times |\mathbf{E}|}$ and the \emph{output matrix} $S^+ \in \mathbb{R}_{\ge0}^{|\mathbf{M}|\times |\mathbf{E}|}$ by 
\begin{equation*}
    S^-_{m\rho} \coloneqq s^-_{m\rho}\;, \qquad \text{and} \qquad  S^+_{m\rho}\coloneqq s^+_{m\rho}\;.
\end{equation*}
The difference of output and input matrix gives rise to the \emph{stoichiometric matrix} $S$:
\begin{equation}\label{eq:S}
    S:=S^+-S^-\;,
\end{equation}
which encodes the net-consumption and net-production of the reactions.

In reaction network theory, the quantities of interest are classically the concentrations (or densities) of the species. We denote the concentration of $X_m$ by $x_m$, and write $\mathbf{x} \coloneqq (x_1,\dots,x_{\vert \mathbf{M} \vert })$ for the corresponding \emph{concentration vector}.  
Under this interpretation, only real nonnegative values of $\mathbf{x}$ are physically meaningful; therefore, we assume throughout that $\mathbf{x} \in \mathbb{R}_{\geq 0}^{\vert \mathbf{M} \vert }$.

The dynamics of the species concentrations depend on the mathematical laws of the reactions, so-called \emph{kinetics} or \emph{reaction rates}. To emphasize the generality of the approach we are describing, we require here only one basic property of the reaction rates.

\begin{definition}[Kinetic model]\label{def:kineticmodel}
A continuously differentiable function $\mathbf{r}:\mathbb{R}_{\geq  0}^{|\mathbf{M}|}\to \mathbb{R}^{|\mathbf{E}|}_{\geq 0}$ is called a \emph{kinetic model} or a \emph{reaction rate function} for $\mathcal{N}$ if, for every reaction $\rho$, the rate $\mathbf{r}_\rho$ depends exactly on the concentration of the inputs of $\rho$, that is, for each $\rho \in \mathbf{E}$ and $ m\in \mathbf{M}$ we have \begin{equation}\label{eq:dependencies}
        \dfrac{\partial r_\rho(\mathbf{x})}{\partial x_m}\equiv 0 \quad \Leftrightarrow\quad s^-_{m\rho}=0.
    \end{equation}
\end{definition}

\begin{remark}[Further common assumptions on kinetics]
Def.~\ref{def:kineticmodel} only specifies that the dependencies of the reaction rates are encoded in the network structure. We state it in this minimal form since the results presented here do not require further assumptions. To guarantee invariance of the positive orthant under time-evolution, it is customary to additionally assume that $
r_\rho(\mathbf{x})=0$ whenever $x_m=0$ for some input of reaction $\rho$, that is, whenever $s^-_{m\rho}>0$. Further, many results in the literature assume \emph{monotonicity} of the reaction rates, i.e., they prescribe the sign of $\partial r_\rho(\mathbf{x})/\partial x_m$ whenever $s^-_{m\rho}>0$ (and $x_m>0$). We do not require these assumptions here.
\end{remark}

Widely used monotone kinetic models such as classical \cite{MA64} and generalized mass-action kinetics \cite{Muller:12}, Michaelis--Menten kinetics \cite{MM13}, and Hill kinetics \cite{Hill10} satisfy Def.~\ref{def:kineticmodel}, as do nonmonotone kinetics such as Haldane kinetics \cite{haldane:1930} with substrate inhibition. These standard kinetic models all fall within the broad class of  \emph{product-form kinetics}, where
\begin{equation}\label{eq:produtform}
r_\rho(\mathbf{x})=k_\rho\prod_{m=1}^{|\mathbf{M}|} g_{m\rho}(x_m)^{s^-_{m\rho}},
\end{equation}
$k_1,\dots,k_{\vert \mathbf{E} \vert } >0$ are fixed parameters and each $g_{m\rho}$ is a parametric rational function of $x_m$ with $g_{m\rho}(0)=0$. The exponents $s^-_{m\rho}$ ensure that the nonlinearity of $r_\rho$ depends on the concentrations of the reactants of reaction $\rho$. In ecology, mass-action, Michaelis--Menten, and Hill kinetics correspond respectively to Holling functional responses of type I, II, and III \cite{holling:1965}.

After choosing a kinetic model  $\mathbf{r}(\mathbf{x})$, the time evolution of the concentrations, denoted by $\mathbf{x}(t)$, is modelled by the following system of ordinary differential equations (ODEs):
\begin{equation}\label{eq:main}
\dot{\mathbf{x}}=f(\mathbf{x})=S\mathbf{r}(\mathbf{x}),
\end{equation}
where $S$ is the stoichiometric matrix from~\eqref{eq:S}. A nonnegative concentration vector $\bar{\mathbf{x}}\ge0$ is called an \emph{equilibrium} of \eqref{eq:main} if $f(\bar{\mathbf{x}})=0$. For further simplicity, we focus on positive equilibria $\bar{\mathbf{x}}>0$ where all concentrations are positive. A priori, the existence of a positive equilibrium requires a linear condition on $S$, namely that $\ker(S)$ intersects the nonnegative  orthant (resp. positive if $r(\mathbf{x})>0$ for $\mathbf{x}>0$),
which we assume whenever needed. In chemistry, the notion of \emph{steady-state} is used for the analogous concept, as `equilibrium' may carry additional thermodynamic meaning, such as detailed balance.

\begin{example}
\label{ex:RunningNetwork} Consider the following reaction network with $3$ species and $6$ reactions.
\begin{equation}
\label{Eq:RunningNetwork}
\begin{aligned}
      &X_1    \underset{1}\longrightarrow  2X_1, \quad 
    2X_1  + X_2     \underset{2}\longrightarrow   X_2, \quad 
        X_2     \underset{3}\longrightarrow  2X_2\\ 
        &X_2    \underset{4}\longrightarrow  X_1 +3X_2, \quad
        X_3     \underset{5}\longrightarrow  2X_3, \quad
        X_3     \underset{6}\longrightarrow  0
\end{aligned}
\end{equation}
The network is deliberately abstract and devoid of a biochemical interpretation. Nevertheless, we choose to discuss it here because it displays all the relevant features we wish to highlight, while remaining small enough for the objects involved to be easily understood. For a biologically relevant network, we refer to
Sec.~\ref{Sec:Computation}.
The input matrix and stoichiometric matrix of~\eqref{Eq:RunningNetwork}  equal
\begin{align*}
    S^- = \begin{pmatrix}
    1 &  2 &  0 &  0 &  0 &  0\\
0   &1 &  1 &  1 &  0 &  0\\
0 &  0 &  0 &  0 &  1 &  1
    \end{pmatrix}, \qquad S = \begin{pmatrix}
        1 &  -2 &  0  & 1 &  0 &   0\\
0  &  0 &  1 &  2 &  0 &   0\\
0  &  0 &  0 &  0 &  1 &  -1
    \end{pmatrix}.
\end{align*}
Under the assumption of mass-action kinetics the reaction rate function is given by
\begin{align*}
    \mathbf{r}(\mathbf{x}) = \begin{pmatrix}
        k_1 x_1 &
        k_2 x_1^2x_2 &
        k_3 x_2 &
        k_4 x_2&
        k_5 x_3 &
        k_6 x_3
    \end{pmatrix}^\top,
\end{align*}
where $k_1,\dots,k_6 > 0$ are parameters, called reaction rate constants. One can easily verify that $\mathbf{r}(\mathbf{x})$ satisfies the conditions in Def.~\ref{def:kineticmodel}. For example, in the second reaction, only $X_1$ and $X_2$ are input species, and we have
\begin{align*}
    \tfrac{\partial r_2(\mathbf{x})}{\partial x_1} = 2k_2 x_1x_2, \qquad  \tfrac{\partial r_2(\mathbf{x})}{\partial x_2} = k_2 x_1^2, \qquad  \tfrac{\partial r_2(\mathbf{x})}{\partial x_3} =  0. 
\end{align*}
\end{example}

\subsection{The Jacobian matrix and buffering structures}\label{ssec:Jacdet}

Via Taylor expansion, the linearization of the ODE system \eqref{eq:main} is a main tool in the local analysis of stability and bifurcations of equilibria. Consequently, as a first-order approximation, the Jacobian matrix of $f(\mathbf{x})$ plays a central role. This section focuses on the structure of this Jacobian matrix and relates it to the structural properties of the underlying reaction network.

 Differentiation of the vector field $f(\mathbf{x}) = S \mathbf{r}(\mathbf{x})$ in \eqref{eq:main} gives the Jacobian matrix as: \begin{equation}\label{eq:jacobianformal}
    \Jac_f(\mathbf{x}) = S \Jac_\mathbf{r}(\mathbf{x})\; .
\end{equation}
The non-identically zero entries of $ \Jac_\mathbf{r}(\mathbf{x})$ are specified only by the input-reaction relation, via Eq.~\eqref{eq:dependencies}, namely we have
\begin{equation}\label{eq:equivalent}
\left(\Jac_\mathbf{r}(\mathbf{x})\right)_{\rho m}\not\equiv 0\quad \Leftrightarrow \quad   s^-_{{m\rho}} >0 \quad \Leftrightarrow \quad S^-_{m\rho}>0.
\end{equation}

Motivated by \eqref{eq:equivalent}, we replace each non-identically zero partial derivative $\partial r_\rho(\mathbf{x})/\partial x_m$ by a formal symbol $r_{\rho m}$, algebraically independent from all other such symbols, and define the \emph{symbolic reactivity matrix} $R \in \mathbb{R}(r_{\rho m})^{\vert \mathbf{E}\vert \times \vert \mathbf{M}\vert }$ as
\begin{equation}
\label{Eq:SymbReactivity}
    R_{\rho m} :=
    \begin{cases}
        r_{\rho m}, & \text{if species } X_m \text{ is an input of reaction } \rho
        \text{ (i.e., } s^-_{m\rho} > 0\text{)}, \\
        0, & \text{otherwise}.
    \end{cases}
\end{equation}
Furthermore, we consider the \emph{symbolic Jacobian matrix} 
\begin{equation}\label{eq:symbolicjacobian}
  SR \in \mathbb{R}(r_{\rho m})^{\vert \mathbf{M}\vert \times \vert \mathbf{M}\vert }. 
\end{equation}
Throughout the remainder of the paper, we assume a nondegeneracy condition, namely that the determinant of $SR$ is not identically zero as a polynomial in the variables $r_{\rho m}$:
\begin{equation}\label{eq:nondegcond}
    \det(SR) \neq 0.
\end{equation}

\begin{remark}[Conservation laws]
Via the Cauchy-Binet formula \eqref{Eq:Proof:CauchyBinet}, the nondegeneracy condition \eqref{eq:nondegcond} implies that the stoichiometric matrix $S$ has full rank $\vert \mathbf{M} \vert$. The left-kernel space of~$S$ is thus trivial and 
the reaction network has no nonzero linear conservation laws $w \in \mathbb{R}^{\vert\mathbf{M}\vert}$ s.t.
\[
w\;\dot{\mathbf{x}}=w\;S\mathbf{r}(\mathbf{x})=0.
\]
\end{remark}
\begin{remark}[Qualitative matrix theory]
    By \eqref{eq:equivalent}, the matrix $R$ is a ``symbolized" version of the transpose of the input matrix $S^-$, where we replace the nonzero entries of $(S^{-})^\top$ by algebraically independent variables. If we further assume monotonicity of the reaction rate function $\mathbf{r}$, and $r_{\rho m}>0$, then $R$ reduces to the ``sign-pattern" of $(S^-)^\top$, as it is called in qualitative matrix theory \cite{JefKleeVdD:77}. However, here the Jacobian matrix appears not as a sign-pattern itself, but rather as a product among a real matrix $S=S^+-S^-$ and the sign pattern of $(S^-)^\top$.
\end{remark}
\begin{remark}[Parameter-rich kinetics]\label{rmk:pr}
For the monotone case, the algebraic independence of the partial derivatives $\partial r_\rho(\mathbf{x})/\partial x_m$ can also be achieved by restricting the kinetic model $\mathbf{r}$ to a sufficiently flexible parameterized class. This observation motivated the introduction of the notion of \emph{parameter-rich kinetics} \cite{VasStad24}. Roughly speaking, a parametric kinetic model is said to be parameter-rich if, at any given positive equilibrium $  \mathbf{\bar x}$, any partial derivative $\partial r_\rho(\mathbf{\bar x})/\partial x_m$ can be prescribed independently from each other, subject only to the dependency constraints~\eqref{eq:dependencies}. Within such a framework, for every fixed positive equilibrium $\mathbf{\bar x}$, any symbolic Jacobian matrix~\eqref{eq:symbolicjacobian} can be realized as the evaluated Jacobian matrix $\Jac_f( \mathbf{\bar x})$ for a suitable choice of kinetic parameters. In particular, condition \eqref{eq:nondegcond} is always realizable at an equilibrium. The parameter-rich property holds for several kinetic models commonly used in biochemistry, including Michaelis--Menten, Hill, and generalized mass-action kinetics. In contrast, classical mass-action kinetics is not parameter-rich, and therefore imposes additional algebraic constraints on the Jacobian evaluated at an equilibrium, which require further analysis.
\end{remark}

The symbolic Jacobian matrix~\eqref{eq:symbolicjacobian} is a linearly transformed sparse generic matrix in the sense of Sec.~\ref{Sec:LinTransSparseGeneric}. As in Prop.~\ref{Prop:CSGaledualTrick}, we consider $C \in \mathbb{R}^{\vert \mathbf{E} \vert \times (\vert \mathbf{E} \vert - \vert \mathbf{M} \vert) }$ a Gale dual matrix of $S$ and the associated LM-matrix
\begin{align}
\label{Eq:AMatrix}
A \coloneqq [\,R\mid C\,] \in \mathbb{R}(r_{\rho m})^{\vert \mathbf{E} \vert \times \vert \mathbf{E} \vert}.
\end{align}
By Prop.~\ref{Prop:CSGaledualTrick}, the determinant of the symbolic Jacobian is an irreducible polynomial in $\mathbb{R}[r_{\rho m}]$ if and only if $A$ is essentially LM-irreducible.
The matrix $A$ was originally introduced in~\cite{MF15,FM15} in the context of sensitivity analysis of reaction networks. We return to this perspective in Sec.~\ref{sec:sensitivity}. For now, we focus on the irreducibility of the determinant, and on relating the combinatorial canonical form of $A$ to \emph{buffering structures}, which have been introduced in~\cite{OM16}.

For a reaction network $\mathcal{N} = (\mathbf{M},\mathbf{E})$, a subnetwork
$\mathcal{N}' = (\mathbf{M}',\mathbf{E}')$ consists of a subset of the species
$\mathbf{M}' \subseteq \mathbf{M}$ together with a subset of the reactions
$\mathbf{E}' \subseteq \mathbf{E}$. We call $\mathcal{N}'$ \emph{nontrivial} if it is nonempty (i.e. either $\mathbf{M}'$ or $\mathbf{E}'$ is nonempty) and $\mathcal{N}'\neq\mathcal{N}$. 

Subnetworks are compared and combined componentwise: for two
subnetworks $\gamma_1=(\mathbf M_1',\mathbf E_1')$ and $\gamma_2=(\mathbf M_2',\mathbf E_2')$ we
set
\begin{align}
\label{Eq:SubnetworkOps}
\gamma_1\cup\gamma_2:=(\mathbf M_1'\cup\mathbf M_2',\ \mathbf E_1'\cup\mathbf E_2'),
\qquad
\gamma_1\cap\gamma_2:=(\mathbf M_1'\cap\mathbf M_2',\ \mathbf E_1'\cap\mathbf E_2'),
\end{align}
and write $\gamma_1\subseteq\gamma_2$ if $\mathbf M_1'\subseteq\mathbf M_2'$ and
$\mathbf E_1'\subseteq\mathbf E_2'$. For a set of reactions $\mathbf E'\subseteq \mathbf E$ we write 
\begin{align}
\label{Eq:BSnotation1}
\begin{split}
\kappa(\mathbf E') &:=\dim\{v\in\ker(S)\mid v_\rho = 0 \text{ for all } \rho \in \mathbf{E}\setminus \mathbf{E'} \},\\
\end{split}
\end{align}
The number $\kappa(\mathbf E')$ counts the independent cycles of $\ker(S)$ that are
supported inside $\mathbf E'$. Restriction to the coordinates in $\mathbf E'$ identifies
$\{v\in\ker(S)\mid v_\rho=0 \text{ for all }\rho\in\mathbf E\setminus\mathbf E'\}$ with
$\ker\bigl(S[\mathbf M,\mathbf E']\bigr)$, so the rank--nullity theorem gives the alternative
description
\begin{align}\label{Eq:kappaRank}
\kappa(\mathbf E')=|\mathbf E'|-\rank S[\mathbf M,\mathbf E'].
\end{align}

A subnetwork $\mathcal{N}'$ is called \emph{output-complete} if  $\mathbf{E}'$ contains all reactions $\rho$ where species $X_m\in \mathbf{M}'$ participates as input, that is:
\begin{equation}\label{eq:outputcomplete}
        s^-_{m \rho}>0\quad\text{for $X_m\in \mathbf{M}'$}\quad\Rightarrow \quad \rho \in\mathbf{E}'.
\end{equation} 
Following \cite[Eq.~(11)]{OM16} and \cite[Def.~4]{hirono2021structural}, we define the \emph{influence index} of an output-complete subnetwork $\gamma=(\mathbf M',\mathbf E')$  as
\begin{align}\label{Eq:lambda}
\lambda(\mathbf M',\mathbf E')=|\mathbf E'|-|\mathbf M'|-\kappa(\mathbf E'),
\end{align}
and say that $\gamma$ is a \emph{buffering structure} precisely when $\lambda(\gamma)=0$ \cite[Def.~5]{hirono2021structural}. 

\begin{example} \label{ex:BSrunning}
   Revisit the reaction network from Example~\ref{ex:RunningNetwork}. The corresponding symbolic reactivity matrix and a choice of a Gale dual matrix are given by
   \begin{align*}
       R =  \begin{pmatrix}
 r_{11}  & 0 &  0 \\
r_{21} & r_{22} &  0 \\
0 & r_{32} &   0 \\
0 & r_{42} &   0 \\
 0  &  0 &r_{53} \\ 
 0 &  0& r_{63} 
  \end{pmatrix}, \qquad    C =  \begin{pmatrix}
  2 &  1&   0\\
 1 &  0 &  0\\
 0  & 2&   0\\
  0 & -1&   0\\
    0 &  0 &  1\\ 
 0 &  0 &  1
  \end{pmatrix},
   \end{align*}
   The matrix $A = [ \,  R \, \vert C\,]$ is an LM-matrix whose combinatorial canonical form was computed in Example~\ref{Ex:runningLattice}, we have that
   \begin{align*}
       A \cong \begin{pmatrix}
  r_{11} & 2 & 0  & 1 & 0 & 0\\
 r_{21} &  1  & r_{22}  & 0 & 0 & 0\\
  0 &   0 & r_{32}  & 2 & 0 & 0\\
   0 & 0 &   r_{42}  & -1 &  0 & 0\\
   0 & 0 & 0 & 0 & r_{53} & 1 \\
      0 & 0 & 0 & 0 & r_{63} & 1
  \end{pmatrix}.
   \end{align*}
   Consider the subnetwork ($\mathbf{M}_1' = \{X_1\}$, $\mathbf{E}_1' = \{1,2\}$). Since $X_1$
 serves as an input only to reactions $1$ and $2$, the subnetwork is output-complete. A simple computation shows that
  \begin{align*}
         \lambda(\mathbf M_1',\mathbf E_1') = |\mathbf{E}_1'|  - |\mathbf{M}_1'| - \dim \{ v \in \ker(S)\; | \;  v_3 = v_4 = v_5 = v_6=   0 \} = 2 -1-1 =0.
  \end{align*}
  Thus, $(\mathbf{M}_1',\mathbf{E}_1')$ is a buffering structure. A similar computation shows that $(\mathbf{M}'_2 = \{X_1,X_2\}$, $\mathbf{E}'_2 = \{1,2,3,4\})$ and $(\mathbf{M}'_3 = \{X_1,X_2,X_3\}$, $\mathbf{E}'_3 = \{1,2,3,4,5,6\})$ are also buffering structures.
 In Sec.~\ref{sec:bs-lattice}, we will develop methods to determine all buffering structures of $\mathcal N$, cf. Example~\ref{Ex:SurplusRunning}.
\end{example}

The correspondence between buffering structures and upper triangular blocks of $A$ in Example~\ref{ex:BSrunning} is not a coincidence. The following characterization of buffering structures in terms of upper block triangular matrices suggests a strong connection between 
buffering structures and the CCF of LM-matrices.
We make this connection precise in Sec.~\ref{sec:bs-lattice}~and~\ref{sec:bs-orderideals}. 

\begin{prop}\label{def:bs}
Assume the nondegeneracy condition~\eqref{eq:nondegcond}. The network $\mathcal N$ admits a nontrivial buffering structure if and only if there is a Gale dual matrix $C$ of $S$ such that the rows and the columns of $[\,R\mid C\,]$ can be permuted so that the resulting matrix is
upper block-triangular with respect to a partition of $[\,|\mathbf E|\,]$ into at least two nonempty parts.

Moreover,
for each buffering structure $(\mathbf M',\mathbf E')$ there exist a choice of a Gale dual matrix $C$ of $S$ and a subset of indices $\Gamma \subseteq [ \lvert \mathbf{E} \rvert]$ such that $A = [\,R \mid C\,]$ is upper block triangular (after permuting its rows and columns), that is 
\[
A[\,\Gamma^{c},\Gamma\,]=\mathbf 0 ,
\]
the rows of $\Gamma$ are the reactions of $\mathbf E'$, and the columns of $\Gamma$ are the species columns of $\mathbf M'$ together with $\kappa(\mathbf E')$ columns of $C$.
\end{prop}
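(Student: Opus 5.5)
The plan is to prove the ``moreover'' part first, by an explicit construction, and then obtain the equivalence from it.

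\emph{Construction.} Let $(\mathbf M',\mathbf E')$ be a buffering structure with $k:=\kappa(\mathbf E')$. First choose a basis $c_1,\dots,c_k$ of $\{v\in\ker S\mid v_\rho=0\ \forall\rho\notin\mathbf E'\}$. Then extend it to a basis $c_1,\dots,c_{|\mathbf E|-|\mathbf M|}$ of $\ker S$, and let $C$ have these as columns; this is a Gale dual of $S$. Set $\Gamma$ to be the rows $\mathbf E'$, and the columns $\mathbf M'\cup\{c_1,\dots,c_k\}$. The zero block $A[\Gamma^c,\Gamma]$ is immediate. For $\rho\notin\mathbf E'$ and $m\in\mathbf M'$, output-completeness~\eqref{eq:outputcomplete} gives $s^-_{m\rho}=0$, hence $R_{\rho m}=0$. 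For $\rho\notin\mathbf E'$, each $c_i$ with $i\le k$ vanishes at $\rho$ by construction. The block has the right shape because $\lambda=0$ gives $|\mathbf E'|=|\mathbf M'|+k$, so $\Gamma$ is square. Permuting rows and columns then yields upper block-triangular form.

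\emph{Forward direction of the equivalence.} A nontrivial buffering structure has $\Gamma\neq\emptyset$ and $\Gamma\neq[|\mathbf E|]$, so the construction above gives two nonempty parts. The only care needed is in the degenerate cases: if $\mathbf E'=\emptyset$, then $|\mathbf M'|=0$ as well, so the structure is not nontrivial. Likewise $\Gamma=[|\mathbf E|]$ forces $\mathbf E'=\mathbf E$ and, by counting, $\mathbf M'=\mathbf M$.

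\emph{Converse.} Suppose some $[\,R\mid C\,]$ is permuted to upper block-triangular form with a nonempty proper square block; call it $\Gamma$, with rows $\mathbf E'$ and columns $\mathbf M'\cup C'$, where $A[\Gamma^c,\Gamma]=0$. I will check that $(\mathbf M',\mathbf E')$ is a buffering structure.
\begin{itemize}
\item Output-completeness holds: $R[\mathbf E\setminus\mathbf E',\mathbf M']=0$ means that no species in $\mathbf M'$ is an input of any reaction outside $\mathbf E'$.
\item The columns in $C'$ are independent kernel vectors supported in $\mathbf E'$, so $|C'|\le\kappa(\mathbf E')$, and therefore $\lambda\le|\mathbf E'|-|\mathbf M'|-|C'|=0$.
\item For the reverse inequality $\lambda\ge0$, use nonsingularity of $A$ (from~\eqref{eq:nondegcond} via Prop.~\ref{Prop:CSGaledualTrick}). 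A block-triangular nonsingular matrix has nonsingular diagonal block $A[\Gamma^c,\Gamma^c]$, so the complementary columns must be independent on the rows $\Gamma^c$. Equivalently, apply the rank formula $\rank A=n+\min\varphi_A$ with $I=\mathbf E'$. Then $\varphi_A(\mathbf E')\ge0$ reads $\rank C[\mathbf E',\cdot]+|\mathbf M'|-|\mathbf E'|\ge0$, once one notes that the symbolic columns touching $\mathbf E'$ are at least $\mathbf M'$. I expect to have to reconcile this with $\kappa$ through Remark~\ref{Remark:1genericVSLM}, whose Gale-duality rank identity gives $\rank C[\mathbf E',\cdot]=|\mathbf E'|-|\mathbf M|+\rank S[\mathbf M,\mathbf E\setminus\mathbf E']$.
\end{itemize}

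This last inequality is the step I expect to be the main obstacle. The cleaner route I would try first is a different count. Columns of $C$ outside $C'$, restricted to $\mathbf E'$ rows, may be nonzero, but that does not matter. By upper-triangularity, the kernel vectors supported in $\mathbf E'$ form a subspace $W$ of dimension $\kappa(\mathbf E')$. Suppose some $w\in W$ were not in the span of $C'$, after a change of basis. Write $w$ in the basis given by the columns of $C$; its component outside $C'$ gives a nonzero combination of non-$\Gamma$ columns of $C$ vanishing on $\Gamma^c$. That contradicts the invertibility of $A[\Gamma^c,\Gamma^c]$, since $R$ contributes nothing there for that combination. Hence $|C'|=\kappa(\mathbf E')$ and $\lambda=0$. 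Nontriviality follows from $\Gamma$ being nonempty and proper.
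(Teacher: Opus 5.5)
Your proof is correct, and it supplies an argument the paper leaves to the literature. The paper's proof is a single line: ``follows directly from the definition'', plus references.

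\textbf{Forward direction and ``moreover'' part.} This is the adapted-basis construction that line has in mind. You put a basis of the cycles supported in $\mathbf E'$ first in $C$. Output-completeness then gives $R[\mathbf E\setminus\mathbf E',\mathbf M']=\mathbf 0$, and $\lambda=0$ makes the block square. Your counting for the degenerate cases $\mathbf E'=\emptyset$ and $\mathbf E'=\mathbf E$ is also right.

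\textbf{Converse.} Here you add something the paper does not spell out, and your final count works.
\begin{enumerate}
\item The columns $C'$ vanish on $\mathbf E\setminus\mathbf E'$, so they are independent elements of $W=\{v\in\ker S\mid \supp v\subseteq\mathbf E'\}$. This gives $|C'|\le\kappa(\mathbf E')$, hence $\lambda\le 0$.
\item Suppose some $w\in W$ lay outside $\operatorname{span}(C')$. Its coefficient vector $\beta\neq 0$ on the remaining columns $C''$ of $C$ would satisfy $C''[\mathbf E\setminus\mathbf E',\cdot\,]\,\beta=\mathbf 0$. That is a linear dependence among columns of the complementary diagonal block. This block is nonsingular because $\det A\neq 0$ by Prop.~\ref{Prop:CSGaledualTrick}, so no such $w$ exists.
\end{enumerate}
This yields the slightly stronger fact $\operatorname{span}(C')=W$, using only linear algebra.

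\textbf{Comparison with the paper's tools.} Within the paper one could instead quote $\lambda\ge 0$ for output-complete subnetworks from Thm.~\ref{Cor:SurplusConsequences}(i). That result is proved later, but independently of this proposition. It rests on $\varphi_A(\mathbf E\setminus\mathbf E')=\lambda(\mathbf M^{\mathrm{oc}}(\mathbf E'),\mathbf E')\ge 0$ from Lemma~\ref{Lem:SurplusIndex}. Your route avoids the surplus-function machinery entirely.

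\textbf{Caution.} Drop the ``equivalently, apply the rank formula with $I=\mathbf E'$'' aside, which does not work as written.
\begin{itemize}
\item The influence index is governed by $\varphi_A(\mathbf E\setminus\mathbf E')$, not by $\varphi_A(\mathbf E')$.
\item Your displayed inequality would bound $|\mathbf E'|-|\mathbf M'|$ from above, whereas $\lambda\ge 0$ needs a lower bound.
\item The displayed inequality does not even follow from $\varphi_A(\mathbf E')\ge 0$, since $|\operatorname{in}(\mathbf E')|\ge|\mathbf M'|$ points the wrong way.
\end{itemize}
Also, $\dim W=\kappa(\mathbf E')$ is the definition of $\kappa$, not a consequence of triangularity.
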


\begin{proof}
The result follows directly from the definition of  buffering structures, for more details see
\cite[Eq.~(12)]{OM16} and \cite[Eq.~(4) and SI Appendix~A]{HOM:25}. 
\end{proof}

\subsection{Influence index and the LM-surplus function}
\label{sec:bs-lattice}
As a first step to connect buffering structures to the theory of LM-matrices, we show that the LM-surplus function $\varphi_A$ from
Sec.~\ref{sec:CCF} and the influence index~\eqref{Eq:lambda} are the same
function, evaluated on complementary sets of reactions (Lemma~\ref{Lem:SurplusIndex}). Moreover, as
a consequence we deduce that the buffering structures correspond bijectively to
the minimizers of $\varphi_A$ (Thm.~\ref{Cor:SurplusConsequences}). 

Throughout this subsection, let $\mathcal N=(\mathbf M,\mathbf E)$ be a reaction network satisfying the
nondegeneracy condition~\eqref{eq:nondegcond}, let $C$ be a Gale dual matrix of $S$ and let $A=[\,R\mid C\,]$ be the LM-matrix from~\eqref{Eq:AMatrix}. 
For a set of reactions $\mathbf E'\subseteq \mathbf E$ we write 
\begin{align}
\label{Eq:BSnotation}
\begin{split}
\operatorname{in}(\mathbf E') &:=\{X_m\in\mathbf M\mid s^-_{m\rho}>0 \text{ for some }\rho\in\mathbf E'\},\\
\mathbf M^{\mathrm{oc}}(\mathbf E') &:=\mathbf M\setminus \operatorname{in}(\mathbf E\setminus\mathbf E').
\end{split}
\end{align}
In words: $\operatorname{in}(\mathbf E')$ is the set of species consumed by the reactions in
$\mathbf E'$, and $\mathbf M^{\mathrm{oc}}(\mathbf E')$ consists of those species
that are \emph{not} consumed by any reaction outside $\mathbf E'$.
As the next lemma shows, the set $\mathbf M^{\mathrm{oc}}(\mathbf E')$ of species is exactly what output-completeness allows.

\begin{lemma}\label{Lem:OutputComplete}
A subnetwork $(\mathbf M',\mathbf E')$ is output-complete if and only if $\mathbf M'\subseteq \mathbf M^{\mathrm{oc}}(\mathbf E')$. In particular,
$\bigl(\mathbf M^{\mathrm{oc}}(\mathbf E'),\mathbf E'\bigr)$ is the largest output-complete subnetwork with reaction set $\mathbf E'$.
\end{lemma}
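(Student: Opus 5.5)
The plan is to unwind the definitions of output-completeness~\eqref{eq:outputcomplete} and of $\mathbf M^{\mathrm{oc}}(\mathbf E')$ in~\eqref{Eq:BSnotation} and show that they say the same thing. The argument is a direct set-theoretic equivalence; no earlier results are needed.

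First I will rewrite output-completeness contrapositively: $(\mathbf M',\mathbf E')$ is output-complete if and only if for every reaction $\rho\in\mathbf E\setminus\mathbf E'$ and every $X_m\in\mathbf M'$ we have $s^-_{m\rho}=0$. Equivalently, no species of $\mathbf M'$ is an input of a reaction outside $\mathbf E'$. By the definition of $\operatorname{in}(\cdot)$ this is exactly the statement
\[
\mathbf M'\cap\operatorname{in}(\mathbf E\setminus\mathbf E')=\emptyset .
\]
This in turn is the same as $\mathbf M'\subseteq\mathbf M\setminus\operatorname{in}(\mathbf E\setminus\mathbf E')=\mathbf M^{\mathrm{oc}}(\mathbf E')$. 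I will write out each implication explicitly. For ($\Rightarrow$), take $X_m\in\mathbf M'$; if $X_m$ were in $\operatorname{in}(\mathbf E\setminus\mathbf E')$, there would be $\rho\notin\mathbf E'$ with $s^-_{m\rho}>0$, and output-completeness would force $\rho\in\mathbf E'$, a contradiction. For ($\Leftarrow$), if $X_m\in\mathbf M'$ and $s^-_{m\rho}>0$ but $\rho\notin\mathbf E'$, then $X_m\in\operatorname{in}(\mathbf E\setminus\mathbf E')$, contradicting $X_m\in\mathbf M^{\mathrm{oc}}(\mathbf E')$.

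For the ``in particular'' part, I apply the equivalence with $\mathbf M'=\mathbf M^{\mathrm{oc}}(\mathbf E')$ itself. The inclusion $\mathbf M'\subseteq\mathbf M^{\mathrm{oc}}(\mathbf E')$ holds trivially, so $(\mathbf M^{\mathrm{oc}}(\mathbf E'),\mathbf E')$ is output-complete. Moreover, every output-complete subnetwork with reaction set $\mathbf E'$ has species set contained in $\mathbf M^{\mathrm{oc}}(\mathbf E')$, which gives maximality with respect to inclusion.

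There is no real obstacle here. The only point requiring care is to keep the quantifier over reactions outside $\mathbf E'$ straight when passing to the contrapositive.
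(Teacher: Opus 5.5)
Your proposal is correct and is essentially the paper's proof: both rewrite output-completeness as $\mathbf M'\cap\operatorname{in}(\mathbf E\setminus\mathbf E')=\emptyset$ and read this as $\mathbf M'\subseteq\mathbf M^{\mathrm{oc}}(\mathbf E')$. Your explicit treatment of the ``in particular'' clause (output-completeness of $(\mathbf M^{\mathrm{oc}}(\mathbf E'),\mathbf E')$ together with maximality) spells out a step the paper leaves implicit.
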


\begin{proof}
By~\eqref{eq:outputcomplete}, $(\mathbf M',\mathbf E')$ is output-complete if and only if 
every reaction that consumes a species of $\mathbf M'$ already belongs to $\mathbf E'$, that is, if and only if no
species of $\mathbf M'$ is consumed by a reaction of $\mathbf E\setminus\mathbf E'$. In the
notation~\eqref{Eq:BSnotation} this reads
$\mathbf M'\cap\operatorname{in}(\mathbf E\setminus\mathbf E')=\emptyset$, which is precisely
$\mathbf M'\subseteq\mathbf M^{\mathrm{oc}}(\mathbf E')$.
\end{proof}

\begin{lemma}[The LM-surplus function is the influence index]\label{Lem:SurplusIndex}
For every $\mathbf E'\subseteq\mathbf E$,
\[
\varphi_A(\mathbf E\setminus \mathbf E')
\;=\;|\mathbf E'|-\bigl|\mathbf M^{\mathrm{oc}}(\mathbf E')\bigr|-\kappa(\mathbf E')
\;=\;\lambda\bigl(\mathbf M^{\mathrm{oc}}(\mathbf E'),\,\mathbf E'\bigr)
\;
\]
\end{lemma}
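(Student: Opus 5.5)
We compute $\varphi_A(I)$ for $I=\mathbf E\setminus\mathbf E'$ directly from the definition~\eqref{Eq:surplus}. The row index set of $A=[\,R\mid C\,]$ is $\mathbf E$. The symbolic columns $N_T$ are the species columns and the numeric columns $N_Q$ are the $|\mathbf E|-|\mathbf M|$ columns of $C$. This gives
\[
\varphi_A(I)=\rank C[I,N_Q]+\bigl|\{X_m\mid R_{\rho m}\neq 0\text{ for some }\rho\in I\}\bigr|-|I|.
\]
The proof consists of rewriting each of the three terms and adding them up.

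\textbf{Step 1 (symbolic term).} By~\eqref{Eq:SymbReactivity}, $R_{\rho m}\neq 0$ if and only if $s^-_{m\rho}>0$. Hence the symbolic term is exactly $|\operatorname{in}(I)|=|\operatorname{in}(\mathbf E\setminus\mathbf E')|$. By the definition~\eqref{Eq:BSnotation}, this equals $|\mathbf M|-|\mathbf M^{\mathrm{oc}}(\mathbf E')|$.

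\textbf{Step 2 (numeric term).} We use the Gale duality rank identity of Remark~\ref{Remark:1genericVSLM}, applied with $n=|\mathbf E|$ and $m=|\mathbf M|$. Here $\rank S=|\mathbf M|$ holds by the nondegeneracy condition~\eqref{eq:nondegcond} via Cauchy--Binet. The identity gives
\[
\rank C[I,N_Q]=\rank S[\mathbf M,I^c]+|I|-|\mathbf M|=\rank S[\mathbf M,\mathbf E']+|I|-|\mathbf M|.
\]
By~\eqref{Eq:kappaRank}, we have $\rank S[\mathbf M,\mathbf E']=|\mathbf E'|-\kappa(\mathbf E')$.

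\textbf{Step 3 (assemble).} Substituting Steps 1 and 2 into the formula for $\varphi_A(I)$ yields
\[
\varphi_A(I)=|\mathbf E'|-\kappa(\mathbf E')+|I|-|\mathbf M|+|\mathbf M|-|\mathbf M^{\mathrm{oc}}(\mathbf E')|-|I|=|\mathbf E'|-|\mathbf M^{\mathrm{oc}}(\mathbf E')|-\kappa(\mathbf E').
\]
Lemma~\ref{Lem:OutputComplete} shows that $(\mathbf M^{\mathrm{oc}}(\mathbf E'),\mathbf E')$ is output-complete, so $\lambda$ is defined on it. Comparing with~\eqref{Eq:lambda} gives the second equality of the lemma.

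The only delicate point is Step 2. One must check that the identity from \cite[Proposition 2.1.9]{Oxley}, as quoted in Remark~\ref{Remark:1genericVSLM}, is applied with $I$ indexing rows of $C$ and $I^c=\mathbf E'$ indexing columns of $S$. One must also confirm it in the edge cases $I=\emptyset$ and $I=\mathbf E$: for $I=\emptyset$ it gives $0=|\mathbf M|-|\mathbf M|$, and for $I=\mathbf E$ it gives $\rank C=|\mathbf E|-|\mathbf M|$, both of which are consistent. If needed, the identity can be reproved in one line. Restricting to coordinates $I$ maps $\ker S=\operatorname{colspan}C$ onto $\operatorname{colspan}C[I,N_Q]$. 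The kernel of this restriction is $\{v\in\ker S\mid v|_I=0\}$, which has dimension $\kappa(\mathbf E')$. Therefore $\rank C[I,N_Q]=(|\mathbf E|-|\mathbf M|)-\kappa(\mathbf E')$, and this agrees with the expression in Step 2.
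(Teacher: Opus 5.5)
Your proof is correct and follows the paper's proof: you use the same three-term decomposition of $\varphi_A(\mathbf E\setminus\mathbf E')$, and you compute the numeric term as $(|\mathbf E|-|\mathbf M|)-\kappa(\mathbf E')$. Your main route for Step~2 goes through the Gale-duality rank identity of Remark~\ref{Remark:1genericVSLM} together with \eqref{Eq:kappaRank}, whereas the paper applies rank--nullity directly to the coordinate projection $\ker(S)\to\mathbb R^{I}$; that is exactly your one-line fallback, so the two arguments coincide.
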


\begin{proof}
Put $I:=\mathbf E\setminus\mathbf E'$ and recall from Sec.~\ref{sec:CCF} that
\[
\varphi_A(I)=\underbrace{\rank A[I,N_Q]}_{(a)}
+\underbrace{\bigl|\{j\in N_T\mid \exists\, i\in I \text{ with } A_{ij}\neq 0\}\bigr|}_{(b)}
-\underbrace{|I|}_{(c)},
\]
where $N_T$ indexes the columns of $R$, that is the species, and $N_Q$ those of $C$.

\emph{The term $(b)$.} The $j$-th column of $R$ has a nonzero entry in a row indexed by $I$ if and only if the species 
$X_j$ is consumed by some reaction of $I$. 
Hence $(b)=|\operatorname{in}(I)|$.

\emph{The term $(a)$.} The submatrix $A[I,N_Q]$ consists of the rows of $C$ indexed by $I$. Since the columns of $C$ form a basis of $\ker(S)$, this submatrix represents the coordinate projection
\[
\pi_I\colon \ker(S)\longrightarrow \mathbb{R}^{I},\qquad v\longmapsto (v_\rho)_{\rho\in I},
\]
so $(a)=\dim\pi_I(\ker(S))$. A vector $v\in\ker(S)$ lies in $\ker(\pi_I)$ exactly when $v_\rho=0$ for
all $\rho\in I$, that is, when $\supp(v)\subseteq\mathbf E'$; by~\eqref{Eq:BSnotation1} the space of
such $v$ has dimension $\kappa(\mathbf E')$. The rank--nullity theorem therefore gives
\[
(a)= \dim\pi_I(\ker (S)) = \dim\ker(S) - \dim \ker(\pi_I) =
\dim\ker (S)-\kappa(\mathbf E')=\bigl(|\mathbf E|-|\mathbf M|\bigr)-\kappa(\mathbf E'),
\]
where we used that $S$ has full rank $|\mathbf M|$ by~\eqref{eq:nondegcond}.

Using that $(c) = |I|=|\mathbf E|-|\mathbf E'|$, by adding the three terms $(a),(b),(c)$, we obtain
\[
\varphi_A(I)=|\mathbf E'|-\bigl|\mathbf M^{\mathrm{oc}}(\mathbf E')\bigr|-\kappa(\mathbf E'),
\]
which equals $\lambda\bigl(\mathbf M^{\mathrm{oc}}(\mathbf E'),\,\mathbf E'\bigr)$.
\end{proof}

\begin{remark}\label{Rmk:LinkTo1generic}
Lemma~\ref{Lem:SurplusIndex} is the reaction network instance of
Remark~\ref{Remark:1genericVSLM}: there we observed that if $Q$ is a Gale dual of $S$, then the LM-surplus function of $[\,T\mid Q\,]$ is determined by $S$ and $T$ alone. For $T=R$ and $Q=C$
this says that $\varphi_A$ is determined by the stoichiometric matrix $S$ together with the zero
pattern of $R$, that is, by $S$ and $S^-$. Lemma~\ref{Lem:SurplusIndex} makes this dependence
explicit by exhibiting $\varphi_A$ as the influence index. In particular $\varphi_A$, and hence
also the row/column partition of $A$ and the partial order $\preceq$ of~\eqref{Eq:PartialOrder},
do not depend on the choice of the Gale dual matrix $C$.
\end{remark}

Next, we prove the main result of this subsection, establishing a correspondence between buffering structures and the minimizer lattice of the LM-surplus function. We note that part~(i) of Thm.~\ref{Cor:SurplusConsequences} can also be found in \cite[Rem.~8]{hirono2021structural}.

\begin{theorem}
\label{Cor:SurplusConsequences}
As before we assume the nondegeneracy condition~\eqref{eq:nondegcond}. Then:
\begin{enumerate}[label=\textnormal{(\roman*)}]
\item $\lambda(\gamma)\ge 0$ for every output-complete subnetwork $\gamma$,
\item a subnetwork $(\mathbf M',\mathbf E')$ is a buffering structure if and only if
      $\mathbf M'=\mathbf M^{\mathrm{oc}}(\mathbf E')$ and
      $\mathbf E\setminus\mathbf E'\in\mathcal L_{\min}(\varphi_A)$.
\end{enumerate}
In particular, the species set of a buffering structure is determined by its reaction set, and
$(\mathbf M',\mathbf E')\mapsto\mathbf E\setminus\mathbf E'$ is an inclusion-reversing bijection
from the set of buffering structures of $\mathcal N$ onto $\mathcal L_{\min}(\varphi_A)$.
\end{theorem}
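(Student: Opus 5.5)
The plan is to combine Lemma~\ref{Lem:OutputComplete} and Lemma~\ref{Lem:SurplusIndex} with the monotonicity of $\lambda$ in the species set, and the fact that $\min\varphi_A=0$. Since $\det(SR)\neq 0$, Prop.~\ref{Prop:CSGaledualTrick} shows that $A$ is nonsingular. As recalled in Sec.~\ref{sec:CCF}, $\rank A = n + \min\varphi_A$, so $\min\varphi_A=0$ and $\varphi_A(\emptyset)=0$. In particular $\mathcal L_{\min}(\varphi_A)=\{I\mid \varphi_A(I)=0\}$.

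For (i), let $\gamma=(\mathbf M',\mathbf E')$ be output-complete. By Lemma~\ref{Lem:OutputComplete}, $\mathbf M'\subseteq\mathbf M^{\mathrm{oc}}(\mathbf E')$. Since $\lambda$ in \eqref{Eq:lambda} decreases when the species set grows and the reaction set is fixed, we get
\[
\lambda(\gamma)\ \ge\ \lambda\bigl(\mathbf M^{\mathrm{oc}}(\mathbf E'),\mathbf E'\bigr)=\varphi_A(\mathbf E\setminus\mathbf E')\ \ge\ \min\varphi_A=0,
\]
where the equality is Lemma~\ref{Lem:SurplusIndex}. The key bookkeeping is that the difference $\lambda(\gamma)-\lambda(\mathbf M^{\mathrm{oc}}(\mathbf E'),\mathbf E')$ equals exactly $|\mathbf M^{\mathrm{oc}}(\mathbf E')\setminus\mathbf M'|$.

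For (ii), first suppose $(\mathbf M',\mathbf E')$ is a buffering structure. Then the chain above holds with $\lambda(\gamma)=0$, so every inequality in it is an equality. This gives $|\mathbf M^{\mathrm{oc}}(\mathbf E')\setminus\mathbf M'|=0$, i.e.\ $\mathbf M'=\mathbf M^{\mathrm{oc}}(\mathbf E')$, and $\varphi_A(\mathbf E\setminus\mathbf E')=0$, i.e.\ $\mathbf E\setminus\mathbf E'\in\mathcal L_{\min}(\varphi_A)$. Conversely, suppose both conditions hold. Then the subnetwork is output-complete by Lemma~\ref{Lem:OutputComplete}, and its influence index is $\varphi_A(\mathbf E\setminus\mathbf E')=0$ by Lemma~\ref{Lem:SurplusIndex}. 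Hence it is a buffering structure.

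The "in particular" part follows from (ii). The species set is forced to be $\mathbf M^{\mathrm{oc}}(\mathbf E')$, so the map $(\mathbf M',\mathbf E')\mapsto\mathbf E\setminus\mathbf E'$ is injective. Every minimizer $I$ is attained by $(\mathbf M^{\mathrm{oc}}(\mathbf E\setminus I),\mathbf E\setminus I)$, so the map is surjective. For inclusion reversal, note that $\mathbf M^{\mathrm{oc}}$ is monotone in $\mathbf E'$: enlarging $\mathbf E'$ shrinks $\operatorname{in}(\mathbf E\setminus\mathbf E')$. Hence $\gamma_1\subseteq\gamma_2$ holds iff $\mathbf E_1'\subseteq\mathbf E_2'$, which holds iff $\mathbf E\setminus\mathbf E_2'\subseteq\mathbf E\setminus\mathbf E_1'$. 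The only delicate point in the whole argument is identifying the minimum value of $\varphi_A$ as $0$, which nonsingularity settles; the rest is direct.
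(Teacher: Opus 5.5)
Your proposal is correct and follows essentially the same route as the paper. The paper also splits $\lambda(\mathbf M',\mathbf E')=\varphi_A(\mathbf E\setminus\mathbf E')+\bigl(|\mathbf M^{\mathrm{oc}}(\mathbf E')|-|\mathbf M'|\bigr)$ using Lemma~\ref{Lem:SurplusIndex}, gets nonnegativity of the two summands from $\min\varphi_A=0$ (nonsingularity of $A$) and from Lemma~\ref{Lem:OutputComplete}, and then derives (ii) and the inclusion-reversing bijection exactly as you do.
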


\begin{proof}
To prove (i), let $\gamma=(\mathbf M',\mathbf E')$ be an output-complete subnetwork. Using
Lemma~\ref{Lem:SurplusIndex}, we obtain the decomposition
\begin{equation}
\label{Eq:lambdaSplit}
\begin{aligned}
\lambda(\mathbf M',\mathbf E')
&= |\mathbf E'|-|\mathbf M'| -\kappa(\mathbf E') -\bigl|\mathbf M^{\mathrm{oc}}(\mathbf E')\bigr| + \bigl|\mathbf M^{\mathrm{oc}}(\mathbf E')\bigr| =\\
&\lambda( \mathbf M^{\mathrm{oc}}(\mathbf E'),\mathbf E')  +  \Bigl(\bigl|\mathbf M^{\mathrm{oc}}(\mathbf E')\bigr|-|\mathbf M'|\Bigr) 
=\underbrace{\varphi_A(\mathbf E\setminus\mathbf E')}_{\ge\,0}
+\underbrace{\Bigl(\bigl|\mathbf M^{\mathrm{oc}}(\mathbf E')\bigr|-|\mathbf M'|\Bigr)}_{\ge\,0}.
\end{aligned}
\end{equation}
For the first summand, observe that $A$ being nonsingular implies $\min\varphi_A=0$. By Lemma~\ref{Lem:OutputComplete} we have
$\mathbf M'\subseteq\mathbf M^{\mathrm{oc}}(\mathbf E')$, which gives the nonnegativity of the second summand.   

For (ii), suppose first that $\gamma$ is a buffering structure, that is, $\gamma$ is output-complete and $\lambda(\gamma)=0$. Then
both summands in~\eqref{Eq:lambdaSplit} must vanish, which implies that
$\mathbf M'=\mathbf M^{\mathrm{oc}}(\mathbf E')$ and
$\varphi_A(\mathbf E\setminus\mathbf E')=0=\min\varphi_A$, hence
$\mathbf E\setminus\mathbf E'\in\mathcal L_{\min}(\varphi_A)$.
Conversely, if  $\mathbf M'=\mathbf M^{\mathrm{oc}}(\mathbf E')$ and
      $\mathbf E\setminus\mathbf E'\in\mathcal L_{\min}(\varphi_A)$, then $\gamma$ is output-complete by Lemma~\ref{Lem:OutputComplete}, and $\lambda(\gamma)=0$
by~\eqref{Eq:lambdaSplit}. Thus, $\gamma$ is a buffering structure.

Finally, by (ii) a buffering structure is determined by its reaction set, so the map is injective. By the converse direction of (ii), $\bigl(\mathbf M^{\mathrm{oc}}(\mathbf E\setminus I),\,\mathbf E\setminus I\bigr)$ is a buffering structure for every minimizer $I \in \mathcal{L}_{\min}(\varphi_A)$, giving the surjectivity of the map.

It remains to compare the orders. Since $\operatorname{in}(\cdot)$ is monotone, so is $\mathbf E'\mapsto\mathbf M^{\mathrm{oc}}(\mathbf E')$. Hence for buffering
structures the componentwise inclusion $\gamma_1\subseteq\gamma_2$ is equivalent to $\mathbf E_1'\subseteq\mathbf E_2'$ alone, and therefore to
$\mathbf E\setminus\mathbf E_1'\supseteq\mathbf E\setminus\mathbf E_2'$.
\end{proof}

We now illustrate Lemma~\ref{Lem:SurplusIndex} and Thm.~\ref{Cor:SurplusConsequences} on the running example.

\begin{example}\label{Ex:SurplusRunning}
Consider the network of Example~\ref{ex:RunningNetwork} with the Gale dual matrix $C$ as in Example~\ref{ex:BSrunning},
and take $\mathbf E'=\{1,2\}$, so that $I=\mathbf E\setminus\mathbf E'=\{3,4,5,6\}$. The reactions
in $I$ consume $X_2$ and $X_3$, hence $\operatorname{in}(I)=\{X_2,X_3\}$ and
$\mathbf M^{\mathrm{oc}}(\mathbf E')=\{X_1\}$. On the one hand, $I$ is one of the six minimizers of
$\varphi_A$ computed in Example~\ref{Ex:runningLattice}, so $\varphi_A(I)=0$. On the other hand,
the only cycle of $\ker(S)$ supported in $\mathbf E'$ is spanned by $(2,1,0,0,0,0)^{\top}$, so
$\kappa(\mathbf E')=1$ and
\[
\lambda\bigl(\mathbf M^{\mathrm{oc}}(\mathbf E'),\mathbf E'\bigr)=2-1-1=0.
\]
 By Thm.~\ref{Cor:SurplusConsequences}(ii) the pair $(\{X_1\},\{1,2\})$ is
therefore a buffering structure, and it is the only one with reaction set $\{1,2\}$: the
subnetwork $(\emptyset,\{1,2\})$ is output-complete as well, but
$\lambda(\emptyset,\{1,2\})=2-0-1=1\neq 0$, in accordance with~\eqref{Eq:lambdaSplit}.

By contrast, for $\mathbf E'=\{3,4\}$ we have $I=\{1,2,5,6\}$ and $\varphi_A(I)=2>0$. This
reflects the fact that $X_2$ is consumed by reaction~$2$, which lies outside $\mathbf E'$. Hence
$\mathbf M^{\mathrm{oc}}(\mathbf E')=\emptyset$, and indeed $\lambda(\emptyset,\{3,4\})=2-0-0=2$,
since no cycle of $\ker(S)$ is supported in $\{3,4\}$. By
Thm.~\ref{Cor:SurplusConsequences}(ii) no choice of species set makes
$(\,\cdot\,,\{3,4\})$ a buffering structure.

More generally, Thm.~\ref{Cor:SurplusConsequences}(ii) turns the six minimizers of
$\varphi_A$ listed in Example~\ref{Ex:runningLattice} into the six buffering structures of
$\mathcal N$, obtained by complementing the reaction set and taking the largest output-complete
species set:
\[
(\emptyset,\emptyset),\;\;
(\{X_1\},\{1,2\}),\;\;
(\{X_3\},\{5,6\}),\;\;
(\{X_1,X_2\},\{1,2,3,4\}),\;\;
(\{X_1,X_3\},\{1,2,5,6\}),\;\;
(\mathbf M,\mathbf E).
\]

Because the bijection reverses inclusion, the Hasse diagram of these six buffering structures is the diagram of Example~\ref{Ex:runningLattice} turned upside down.
\end{example}

\begin{remark} By Thm.~\ref{Cor:SurplusConsequences}, a reaction set appears in at most one buffering structure. However, a
species set may occur in several buffering structures. For such an example, consider the network
\[
X_1\xrightarrow{\ 1\ }X_2\xrightarrow{\ 2\ }X_3\xrightarrow{\ 3\ }X_1,
\qquad
X_3\xrightarrow{\ 4\ } 0
\]
By computing the minimizer lattice of the corresponding LM-surplus function $\varphi_A$ and using Thm.~\ref{Cor:SurplusConsequences}, one finds six
buffering structures
\[
(\emptyset,\emptyset),\quad
(\{X_1\},\{1\}),\quad
(\{X_2\},\{2\}),\quad
(\{X_1,X_2\},\{1,2\}),\quad
(\{X_1,X_2\},\{1,2,3\}),\quad(\mathbf M,\mathbf E),
\]
whose six reaction sets are precisely the complements of the six minimizers of $\varphi_A$.
The list also shows that the converse of the uniqueness statement in Thm.~\ref{Cor:SurplusConsequences} fails: the species set
$\{X_1,X_2\}$ occurs in two different buffering structures, with reaction sets $\{1,2\}$ and
$\{1,2,3\}$, whereas no reaction set occurs twice.
\end{remark}

\subsection{Buffering structures as order ideals}
\label{sec:bs-orderideals}
Thm.~\ref{Cor:SurplusConsequences} already establishes a connection between buffering structures and LM-matrices. We strengthen this connection by identifying the buffering structures with the order ideals of the CCF block poset (Thm.~\ref{Thm:BSideals}). This identification yields several lattice-theoretic consequences (Cor.~\ref{Cor:BSlattice}) and will be used to characterize the reducibility of the symbolic Jacobian determinant (Prop.~\ref{thm:q}). Along the way, we recover results of
\cite{hirono2021structural} such as the closure of buffering structures under unions and intersections.

Throughout the section let $A$ be the LM-matrix associated to a nondegenerate reaction network as in~\eqref{Eq:AMatrix}. 
Since the rows of $A$ are indexed by the reactions, we consider the row/column partition of $A$ from~\eqref{Eq:partition} as a partition of the set of reactions
\[
\mathbf E=M_1\sqcup\dots\sqcup M_p.
\]
The partition is obtained from a maximal descending chain
$\mathbf E=I_0\supsetneq I_1\supsetneq\dots\supsetneq I_p=\emptyset$ in
$\mathcal L_{\min}(\varphi_A)$ via $M_a=I_{a-1}\setminus I_a$; in particular every $M_a$ is
nonempty. Neither the blocks nor the partial order $\preceq$ of~\eqref{Eq:PartialOrder} depend on
the chosen chain or on the chosen combinatorial canonical form, as noted in
Sec.~\ref{sec:CCF}. 

Fix a combinatorial canonical form $\bar L=P_r\,[\,R\mid CB\,]\,P_c$ of $A$. Since $B$ is
invertible, the columns of $CB$ are again a basis of $\ker(S)$, so replacing $C$ by $CB$ we may
assume from now on that a combinatorial canonical form of $A$ is obtained by permutations alone,
$\bar L=P_r\,A\,P_c$. Let $\tau,\sigma\colon[\,|\mathbf E|\,]\to[\,|\mathbf E|\,]$ be the
permutations induced by $P_r$ and $P_c$, so that the row $\rho$ of $A$ is the row $\tau(\rho)$ of
$\bar L$ and the column $j$ of $A$ is the column $\sigma(j)$ of $\bar L$; here the columns of
$A=[\,R\mid C\,]$ are indexed so that $1,\dots,|\mathbf M|$ are the species.

Since $\varphi_A(I)=\varphi_{\bar L}(\tau(I))$ for all $I\subseteq\mathbf E$, the permutation
$\tau$ carries $\mathcal L_{\min}(\varphi_A)$ onto $\mathcal L_{\min}(\varphi_{\bar L})$, and
hence maximal descending chains to maximal descending chains. By \textnormal{(CCF1)} and
\textnormal{(CCF2)}, the diagonal blocks of $\bar L$, numbered in the order in which they appear
in its block-triangular display, are the row/column partition of $\bar L$ determined
by~\eqref{Eq:partition}, and this numbering is realized by the maximal descending chain in
$\mathcal L_{\min}(\varphi_{\bar L})$ whose $a$-th term is the union of the blocks numbered
$a+1,\dots,p$. We take the preimage of that chain under $\tau$ as the chain used
in~\eqref{Eq:partition}. With this choice $\tau(M_1),\dots,\tau(M_p)$ are the diagonal blocks of
$\bar L$ in display order, so that
\[
\bar L[\tau(M_a),\tau(M_b)]=\mathbf 0\qquad\text{whenever } b<a ,
\]
and moreover $M_a\preceq M_b$ if and only if $\tau(M_a)\preceq\tau(M_b)$.

Each block of $\bar L$ contains, besides the rows of $\tau(M_a)$, a set of species columns. We
put
\begin{align}\label{Eq:BlockSpecies}
\mathbf M_a:=\{\,X_m\in\mathbf M\mid \sigma(m)\in\tau(M_a)\,\},\qquad a\in[p],
\end{align}
for the set of species whose column of $A$ lies in the $a$-th block of $\bar L$. Since
$\tau(M_1),\dots,\tau(M_p)$ partition the column indices of $\bar L$, the sets
$\mathbf M_1,\dots,\mathbf M_p$ partition $\mathbf M$; unlike the $M_a$, they may be empty. The
block $\tau(M_a)$ has $|M_a|$ columns, of which $|\mathbf M_a|$ are species columns and the
remaining $k_a:=|M_a|-|\mathbf M_a|\ge 0$ are columns of $C$; as the columns of $C$ are
distributed among the blocks, $\sum_{a=1}^{p}k_a=|\mathbf E|-|\mathbf M|$. We write
\begin{align}\label{Eq:BlockData}
v_a:=(\mathbf M_a,\,M_a),\qquad a\in[p],
\end{align}
for the subnetwork consisting of the species and the reactions of the $a$-th block. (These will
be the vertices of the influence graph $\mathcal G$ of Sec.~\ref{sec:targeted}.)

The description of the buffering structures now rests on a purely order-theoretic notion, which
we recall from \cite[Rem.~2.2.9]{murota_book}.

\begin{defprop}
\label{defprop:orderideal}
    Let $(P,\preceq)$ be any partially ordered set. A
subset $J\subseteq P$ is an \emph{order ideal}, or a \emph{down-set}, if
\begin{align}\label{Eq:OrderIdeal}
\text{for all } y \in J \text{ and } x\in P:
      \ x\preceq y
   \quad\Longrightarrow\quad
   x\in J
\end{align}
that is, if $J$ is closed under passing to $\preceq$-smaller elements. 

\begin{itemize}
    \item[(i)] The order ideals of
$(P,\preceq)$ form a distributive sublattice of $2^{P}$ with respect to union and intersection.
\item[(ii)] $J$ is an order ideal of $(P,\preceq)$ if and only if $P\setminus J$ is an order ideal of  $P$ equipped with the
opposite order to $\preceq$.
\end{itemize}
\end{defprop}

Our main interest lies in the partially ordered set 
$\mathcal P:=\{M_1,\dots,M_p\}$, equipped with the partial order $\preceq$ of~\eqref{Eq:PartialOrder}.
The next lemma identifies the species consumed by the reactions of a union of blocks, and shows
in particular that the sets $\mathbf M_a$ are canonically attached to the blocks.

\begin{lemma}\label{Lem:BlockSpeciesIdeal}
For any order ideal $J$ of $(\mathcal P,\preceq)$, we have
\begin{align}\label{Eq:ProofInIdeal}
\operatorname{in}\Bigl(\bigcup_{M_a\in \mathcal P\setminus J}M_a \Bigr)=\bigcup_{M_a\in \mathcal P\setminus J}\mathbf M_a .
\end{align}
Consequently, for every $a\in[p]$,
\begin{align}\label{Eq:BlockSpeciesCanonical}
\mathbf M_a=\operatorname{in}\Bigl(\ \bigcup_{M_c\,\succeq\, M_a} M_c\Bigr)\ \setminus\
          \operatorname{in}\Bigl(\ \bigcup_{M_c\,\succ\, M_a} M_c\Bigr).
\end{align}
In particular, the definition of $\mathbf M_a$ does not depend on the choice of the combinatorial canonical form or on the choice of the maximal descending chain used to construct the blocks $M_1,\dots,M_p$.
\end{lemma}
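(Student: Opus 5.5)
The plan is to read off the species columns of the rows in a union of blocks directly from the block-triangular shape of $\bar L=P_rAP_c$. Fix an order ideal $J$ and put $U:=\bigcup_{M_a\in\mathcal P\setminus J}M_a$. Since $\mathcal P\setminus J$ is an up-set (Def.~and Prop.~\ref{defprop:orderideal}(ii)), $U$ is closed under passing to $\preceq$-larger blocks. By the definition of the matrix $R$, a species $X_m$ lies in $\operatorname{in}(U)$ if and only if the column of $A$ indexed by $m$ has a nonzero entry in some row $\rho\in U$. Equivalently, $\bar L_{\tau(\rho),\sigma(m)}\neq 0$ for some $\rho\in U$. The task is therefore to show that the species columns meeting the rows $\tau(U)$ nontrivially are exactly the species columns whose index lies in $\tau(U)$.

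For the inclusion ``$\supseteq$'', take $X_m\in\mathbf M_a$ with $M_a\notin J$. Then $\sigma(m)\in\tau(M_a)$, and the column $\sigma(m)$ is symbolic. By (CCF3) the diagonal block $\bar L[\tau(M_a),\tau(M_a)]$ has no zero column, so this column has a nonzero entry, which is a variable, in some row of $\tau(M_a)\subseteq\tau(U)$. Hence $X_m\in\operatorname{in}(U)$.

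For ``$\subseteq$'', suppose $\bar L_{\tau(\rho),\sigma(m)}\neq0$ with $\rho\in M_a\subseteq U$, and let $M_b$ be the block with $\sigma(m)\in\tau(M_b)$. Then $\bar L[\tau(M_a),\tau(M_b)]\neq\mathbf 0$. By (CCF2) the blocks $M_a$ and $M_b$ are comparable, and by (CCF1) together with Lemma~\ref{Lemma:PartOrderpreserving} we get $M_a\preceq M_b$. Since $\mathcal P\setminus J$ is an up-set, $M_b\notin J$, so $X_m\in\mathbf M_b$ lies in the right-hand side. This proves \eqref{Eq:ProofInIdeal}.

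For \eqref{Eq:BlockSpeciesCanonical}, apply \eqref{Eq:ProofInIdeal} twice. First take $J_1=\mathcal P\setminus\{M_c: M_c\succeq M_a\}$; this is an order ideal because its complement is an up-set. Then take $J_2=\mathcal P\setminus\{M_c: M_c\succ M_a\}$. This gives $\operatorname{in}(\bigcup_{c\succeq a}M_c)=\bigcup_{c\succeq a}\mathbf M_c$ and $\operatorname{in}(\bigcup_{c\succ a}M_c)=\bigcup_{c\succ a}\mathbf M_c$. Since the $\mathbf M_c$ are pairwise disjoint, their set difference is exactly $\mathbf M_a$.

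The right-hand side of \eqref{Eq:BlockSpeciesCanonical} involves only $\operatorname{in}(\cdot)$, which is determined by $S^-$, and the blocks with their order, which are canonical by Sec.~\ref{sec:CCF}. Hence $\mathbf M_a$ is independent of all choices. The main point requiring care is the ``$\subseteq$'' step. There one must use (CCF2) to exclude nonzero entries between incomparable blocks, not just the upper-triangularity from (CCF1).
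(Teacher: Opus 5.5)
Your proposal is correct and follows the paper's proof essentially step by step. You prove ``$\supseteq$'' via the no-zero-column consequence of (CCF3), and ``$\subseteq$'' via (CCF1), (CCF2) and Lemma~\ref{Lemma:PartOrderpreserving} together with the up-closedness of $\mathcal P\setminus J$. You then derive \eqref{Eq:BlockSpeciesCanonical} exactly as the paper does, from the two up-sets $\{M_c\mid M_c\succeq M_a\}$ and $\{M_c\mid M_c\succ M_a\}$ and the disjointness of the $\mathbf M_c$.
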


\begin{proof}
We first prove~\eqref{Eq:ProofInIdeal}.
Let $J$ be an order ideal of $(\mathcal P,\preceq)$ and set 
\[U:=\mathcal P\setminus J, \qquad I_J:=\bigcup_{M_a\in U}M_a.\]
Note that $U$ is closed upwards: if $M_a\in U$ and
$M_a\preceq M_b$, then $M_b\in U$, since $M_b\in J$ would force $M_a\in J$
by~\eqref{Eq:OrderIdeal}.

``$\subseteq$'': let $X_m\in\operatorname{in}(I_J)$, say $X_m$ is consumed by a reaction
$\rho\in M_a$ with $M_a\in U$, and let $b$ be the index with $X_m\in\mathbf M_b$. Being consumed
means $A_{\rho m}=r_{\rho m}\neq 0$, that is $\bar L_{\tau(\rho)\,\sigma(m)}\neq 0$; since
$\tau(\rho)\in\tau(M_a)$ and $\sigma(m)\in\tau(M_b)$ by~\eqref{Eq:BlockSpecies}, the block
$\bar L[\tau(M_a),\tau(M_b)]$ is nonzero. By (CCF1) this forces $a\le b$, and by (CCF2) the
blocks $M_a$ and $M_b$ are comparable. If $M_b\preceq M_a$, then $b\le a$ by
Lemma~\ref{Lemma:PartOrderpreserving}, hence $a=b$. Thus, we have  $M_a\preceq M_b$, and
therefore $M_b\in U$, so $X_m\in\bigcup_{M_c\in U}\mathbf M_c$.

``$\supseteq$'': let $M_a\in U$ and $X_m\in\mathbf M_a$, so that $\sigma(m)\in\tau(M_a)$. By
(CCF3) the diagonal block $\bar L[\tau(M_a),\tau(M_a)]$ has rank $|M_a|$ and hence no zero
column, so the column $\sigma(m)$ has a nonzero entry in some row $\tau(\rho)$ with $\rho\in M_a$.
As $\sigma(m)$ is a species column, that entry is $r_{\rho m}$, so $X_m$ is consumed by
$\rho\in M_a\subseteq I_J$.

To prove~\eqref{Eq:BlockSpeciesCanonical}, for a fixed $a \in [p]$ consider the upward closed sets
\[U_1:=\{M_c\in\mathcal P\mid M_a\preceq M_c\}, \quad
\text{ and } \quad U_2 := U_1 \setminus\{M_a\}.\]
By Prop.~\ref{defprop:orderideal}(ii) there exist order ideals $J_1,J_2$ of $(\mathcal{P},\preceq)$ such that $U_1 = \mathcal{P} \setminus J_1, \; U_2 = \mathcal{P} \setminus J_2$.
\bigskip
Applying~\eqref{Eq:ProofInIdeal} to each of them and using that the sets $\mathbf M_c$ are pairwise disjoint, we conclude 
\begin{align*}
    \operatorname{in}\Bigl(\ \bigcup_{M_c \in U_1 } M_c\Bigr)\ \setminus\
          \operatorname{in}\Bigl(\ \bigcup_{M_c \in U_2} M_c\Bigr) = \bigcup_{M_c\in U_1 }\mathbf M_c \setminus \bigcup_{M_c\in U_2}\mathbf M_c =  \mathbf M_a.
\end{align*}
The right-hand side
of~\eqref{Eq:BlockSpeciesCanonical} depends only on $(\mathcal P,\preceq)$, which
by Sec.~\ref{sec:CCF} is canonically attached~to~$A$, finishing the proof.
\end{proof}

Murota's Birkhoff-type description of the minimizers of  $\varphi_A$ \cite[Eq.~(2.27)]{murota_book} states, using
$\min\mathcal L_{\min}(\varphi_A)=\emptyset$ and $\max\mathcal L_{\min}(\varphi_A)=\mathbf E$,
that a subset of $\mathbf E$ is a minimizer of $\varphi_A$ if and only if it is the union of the
blocks indexed by an order ideal of $\mathcal P$ \emph{equipped with Murota's partial order}.
Since the partial order $\preceq$ of~\eqref{Eq:PartialOrder} is the opposite of Murota's, Prop.~\ref{defprop:orderideal}(ii) gives that 
\begin{align}\label{Eq:MinimizersIdeals}
   I\in\mathcal L_{\min}(\varphi_A)
   \quad\Longleftrightarrow\quad
   \mathbf E\setminus I=\bigcup_{M_a\in J} M_a
   \ \text{ for an order ideal } J \text{ of } (\mathcal P,\preceq).
\end{align}
Combining~\eqref{Eq:MinimizersIdeals} with Thm.~\ref{Cor:SurplusConsequences}(ii), the
reaction sets of the buffering structures are exactly the unions
$\bigcup_{M_a\in J}M_a$ over the order ideals $J$ of $(\mathcal P,\preceq)$. The following theorem upgrades
this by including the species sets, and identifying the
two lattices.

\begin{theorem}[Buffering structures are the order ideals of the block poset]\label{Thm:BSideals}
Assume the nondegeneracy condition~\eqref{eq:nondegcond}. For an order ideal $J$ of
$(\mathcal P,\preceq)$ put
\[
\gamma_J:=\Bigl(\ \bigcup_{M_a\in J}\mathbf M_a\ ,\ \bigcup_{M_a\in J}M_a\ \Bigr).
\]
Then $J\mapsto\gamma_J$ is an isomorphism of partially ordered sets from the lattice of order
ideals of $(\mathcal P,\preceq)$, ordered by inclusion, onto the set of buffering structures of
$\mathcal N$, ordered by componentwise inclusion. Its inverse sends a buffering structure
$(\mathbf M',\mathbf E')$ to $\{M_a\in\mathcal P\mid M_a\subseteq\mathbf E'\}$.
\end{theorem}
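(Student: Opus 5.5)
The plan is to combine the description of minimizers in \eqref{Eq:MinimizersIdeals} with Thm.~\ref{Cor:SurplusConsequences}(ii) and Lemma~\ref{Lem:BlockSpeciesIdeal}. First I check that $\gamma_J$ is a buffering structure for every order ideal $J$. Put $\mathbf E_J:=\bigcup_{M_a\in J}M_a$. By \eqref{Eq:MinimizersIdeals}, $\mathbf E\setminus\mathbf E_J\in\mathcal L_{\min}(\varphi_A)$. It remains to verify $\mathbf M^{\mathrm{oc}}(\mathbf E_J)=\bigcup_{M_a\in J}\mathbf M_a$. Since $\mathbf E\setminus\mathbf E_J=\bigcup_{M_a\in\mathcal P\setminus J}M_a$, identity \eqref{Eq:ProofInIdeal} gives
\[
\operatorname{in}(\mathbf E\setminus\mathbf E_J)=\bigcup_{M_a\in\mathcal P\setminus J}\mathbf M_a .
\]
Because $\mathbf M_1,\dots,\mathbf M_p$ partition $\mathbf M$, the complement $\mathbf M^{\mathrm{oc}}(\mathbf E_J)=\mathbf M\setminus\operatorname{in}(\mathbf E\setminus\mathbf E_J)$ is exactly $\bigcup_{M_a\in J}\mathbf M_a$. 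Thm.~\ref{Cor:SurplusConsequences}(ii) then says $\gamma_J$ is a buffering structure.

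Next I show bijectivity. For injectivity, the blocks $M_a$ are nonempty and pairwise disjoint, so $J$ is recovered from $\mathbf E_J$ as $\{M_a\mid M_a\subseteq\mathbf E_J\}$. This also shows that the proposed inverse map is a left inverse. For surjectivity, let $(\mathbf M',\mathbf E')$ be a buffering structure. By Thm.~\ref{Cor:SurplusConsequences}(ii) we have $\mathbf E\setminus\mathbf E'\in\mathcal L_{\min}(\varphi_A)$, so by \eqref{Eq:MinimizersIdeals} there is an order ideal $J$ with $\mathbf E'=\mathbf E_J$. The same theorem gives $\mathbf M'=\mathbf M^{\mathrm{oc}}(\mathbf E')$, and by the first step this equals the species part of $\gamma_J$. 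Hence $(\mathbf M',\mathbf E')=\gamma_J$, with $J=\{M_a\mid M_a\subseteq\mathbf E'\}$ as claimed.

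Finally, I compare the orders. If $J_1\subseteq J_2$, then both components of $\gamma_{J_1}$ are contained in those of $\gamma_{J_2}$. Conversely, if $\gamma_{J_1}\subseteq\gamma_{J_2}$, then $\mathbf E_{J_1}\subseteq\mathbf E_{J_2}$. Since the blocks are disjoint and nonempty, each $M_a\in J_1$ lies in $\mathbf E_{J_2}$ only if $M_a\in J_2$, which gives $J_1\subseteq J_2$.

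The only delicate point is the species identification. It rests on \eqref{Eq:ProofInIdeal}, which needs $\mathcal P\setminus J$ to be upward closed; that holds precisely because $J$ is an order ideal. Everything else is bookkeeping with the disjointness of the blocks, so I expect no real obstacle beyond keeping the complement and order conventions consistent.
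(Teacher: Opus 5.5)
Your proposal is correct and follows essentially the same route as the paper: you obtain the reaction sets from \eqref{Eq:MinimizersIdeals} and Thm.~\ref{Cor:SurplusConsequences}(ii), and you identify the species set via \eqref{Eq:ProofInIdeal} together with the fact that the $\mathbf M_a$ partition $\mathbf M$. You then recover $J$ from $\mathbf E'$ using the disjointness and nonemptiness of the blocks. Your explicit check that inclusion is both preserved and reflected just spells out what the paper summarizes as ``both preserve inclusion.''
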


\begin{proof}
Let $J$ be an order ideal, put $U:=\mathcal P\setminus J$, $\mathbf E_J:=\bigcup_{M_a\in J}M_a$ and
$I_J:=\bigcup_{M_a\in U}M_a$, so that $\mathbf E\setminus\mathbf E_J=I_J$, or equivalently $\mathbf{E} \setminus I_J = \mathbf{E} _J$.
By~\eqref{Eq:MinimizersIdeals} the set $I_J$ is a minimizer of~$\varphi_A$, so
Thm.~\ref{Cor:SurplusConsequences}(ii) yields exactly one buffering structure with
reaction set $\mathbf E_J$, namely $\bigl(\mathbf M^{\mathrm{oc}}(\mathbf E_J),\mathbf E_J\bigr)$.
By Lemma~\ref{Lem:BlockSpeciesIdeal} and because the sets $\mathbf M_a$ partition $\mathbf M$,
\begin{align}
\label{eq:proof:orderideals}
\mathbf M^{\mathrm{oc}}(\mathbf E_J)=\mathbf M\setminus\operatorname{in}(I_J)
=\mathbf M\setminus\bigcup_{M_a\in U}\mathbf M_a=\bigcup_{M_a\in J}\mathbf M_a ,
\end{align}
so that buffering structure is $\gamma_J$.

Conversely, if $(\mathbf M',\mathbf E')$ is a buffering structure, then
$\mathbf E\setminus\mathbf E'\in\mathcal L_{\min}(\varphi_A)$ by
Thm.~\ref{Cor:SurplusConsequences}(ii), and $\mathbf E'=\bigcup_{M_a\in J}M_a$ for an order
ideal $J$ by~\eqref{Eq:MinimizersIdeals}. Using the same argument as in~\eqref{eq:proof:orderideals}, we have that the buffering structure can be written as $(\mathbf M',\mathbf E')=\gamma_J$.
Since the blocks are nonempty and pairwise disjoint, $M_a\subseteq\mathbf E'$ if and
only if $M_a\in J$. Hence, $J$ is recovered from $\gamma_J$ as
$\{M_a\in\mathcal P\mid M_a\subseteq\mathbf E'\}$. Thus, the maps are inverses of each other, and both preserve inclusion.
\end{proof}

\begin{example}\label{Ex:IdealsRunning}
We illustrate Thm.~\ref{Thm:BSideals} on the running example.
Consider the minimizer lattice as in Example~\ref{Ex:runningLattice}
and the maximal descending chain
$\mathbf E\supsetneq\{3,4,5,6\}\supsetneq\{5,6\}\supsetneq\emptyset$. This produces, by
\eqref{Eq:partition} and~\eqref{Eq:BlockSpecies}, the three subnetworks
\[
v_1=\bigl(\{X_1\},\{1,2\}\bigr),\qquad
v_2=\bigl(\{X_2\},\{3,4\}\bigr),\qquad
v_3=\bigl(\{X_3\},\{5,6\}\bigr).
\]
A subset $J\subseteq \mathcal{P}=\{M_1,M_2,M_3\}$ is an order ideal precisely when $M_2\in J$ implies
$M_1\in J$, so exactly two of the eight subsets of $\mathcal{P}$ are excluded, namely $\{M_2\}$ and
$\{M_2,M_3\}$. Thm.~\ref{Thm:BSideals} therefore produces six buffering structures

$\gamma_J=\bigcup_{M_a\in J}v_a$:
\[
\renewcommand{\arraystretch}{1.2}
\begin{array}{c|c|c}
J & \bigcup_{M_a\in J}M_a
  & \gamma_J=\bigl(\bigcup_{M_a\in J}\mathbf M_a,\ \bigcup_{M_a\in J}M_a\bigr)\\\hline
\emptyset & \emptyset & (\emptyset,\emptyset)\\
\{M_1\} & \{1,2\} & (\{X_1\},\{1,2\})\\
\{M_3\} & \{5,6\} & (\{X_3\},\{5,6\})\\
\{M_1,M_2\} & \{1,2,3,4\} & (\{X_1,X_2\},\{1,2,3,4\})\\
\{M_1,M_3\} & \{1,2,5,6\} & (\{X_1,X_3\},\{1,2,5,6\})\\
\mathcal{P} & \mathbf E & (\mathbf M,\mathbf E)
\end{array}
\]

This is exactly the list of six buffering structures found in
Example~\ref{Ex:SurplusRunning}; note that the two extreme ideals give the two trivial buffering
structures $\gamma_\emptyset=(\emptyset,\emptyset)$ and $\gamma_{\mathcal{P}}=(\mathbf M,\mathbf E)$. The
theorem adds to Example~\ref{Ex:SurplusRunning} that the species set of $\gamma_J$ is read off
from $J$ as well, as $\bigcup_{M_a\in J}\mathbf M_a$, rather than having to be recomputed as
$\mathbf M^{\mathrm{oc}}$ of the reaction set.

Finally, since $J\mapsto\gamma_J$ is a lattice
isomorphism, the componentwise operations~\eqref{Eq:SubnetworkOps} can be read off from the
ideals: for instance
\begin{align}
\label{Eq:RunningExIllustratePart1}
(\{X_1,X_2\},\{1,2,3,4\})\cup(\{X_1,X_3\},\{1,2,5,6\})=(\mathbf M,\mathbf E)
\text{ and } \\
(\{X_1,X_2\},\{1,2,3,4\})\cap(\{X_1,X_3\},\{1,2,5,6\})=(\{X_1\},\{1,2\}),
\end{align}
corresponding to $\{M_1,M_2\}\cup\{M_1,M_3\}=\mathcal{P}$ and $\{M_1,M_2\}\cap\{M_1,M_3\}=\{M_1\}$.
\end{example}

To state the next result, we need the following terminology. Abstractly, a partially ordered set $\mathcal{L}$ (e.g., the set of buffering structures ordered by inclusion) is called a \emph{lattice} if it admits two operations, called the \emph{``meet"} $\cap$ and the \emph{``join"} $\cup$, satisfying certain conditions. Since, for our purposes, these operations will always be the usual intersection and union of sets, respectively, we do not go into further details here. For a precise definition, we refer to~\cite[Remark 2.2.14]{murota_book}.
An element $Z \in \mathcal{L}$ is said to be \emph{join-irreducible} if $Z=Z^{\prime} \cup Z^{\prime \prime}$ with $Z^{\prime}, Z^{\prime \prime} \in \mathcal{L}$ means $Z^{\prime}=Z$ or $Z^{\prime \prime}=Z$.

\begin{cor}\label{Cor:BSlattice}
Assume the nondegeneracy condition~\eqref{eq:nondegcond}. Then:
\begin{enumerate}[label=\textnormal{(\roman*)}]
\item Buffering structures form a distributive lattice under the 
operations~\eqref{Eq:SubnetworkOps},~and we have
      \[
      \gamma_{J_1}\cup\gamma_{J_2}=\gamma_{J_1\cup J_2},
      \qquad
      \gamma_{J_1}\cap\gamma_{J_2}=\gamma_{J_1\cap J_2}.
      \]
      In particular buffering structures are closed under componentwise union and intersection,
      and their number equals the number of order ideals of $(\mathcal P,\preceq)$, which is at
      most $2^{p}$.
\item The nonempty join-irreducible buffering structures are exactly the subnetworks
      \[
      \gamma_{\downarrow M_a},\qquad \downarrow M_a:=\{M_c\in\mathcal P\mid M_c\preceq M_a\},
      \qquad a\in[p].
      \]
      Moreover, for every reaction $\rho\in M_a$ the subnetwork $\gamma_{\downarrow M_a}$ is the
      smallest buffering structure whose reaction set contains $\rho$. In particular, a buffering
      structure containing one reaction of a block contains the whole block.
\item The minimal nonempty buffering structures are the block subnetworks $v_a=(\mathbf M_a,M_a)$
      with $M_a$ minimal for $\preceq$, that is, those $M_a$ that are sources of
      $\mathcal D_{\CCF}(\bar L)$.
\end{enumerate}
\end{cor}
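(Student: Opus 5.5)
The plan is to transport every statement through the poset isomorphism $J\mapsto\gamma_J$ of Thm.~\ref{Thm:BSideals}. All three parts then become statements about order ideals of the finite poset $(\mathcal P,\preceq)$.

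For (i), I use that the $\mathbf M_a$ are pairwise disjoint and the $M_a$ are pairwise disjoint and nonempty. From this, componentwise union and intersection of $\gamma_{J_1},\gamma_{J_2}$ are computed blockwise:
\[
\gamma_{J_1}\cup\gamma_{J_2}=\Bigl(\bigcup_{M_a\in J_1\cup J_2}\mathbf M_a,\ \bigcup_{M_a\in J_1\cup J_2}M_a\Bigr)=\gamma_{J_1\cup J_2}.
\]
For intersections, disjointness ensures that $\bigcup_{J_1}\mathbf M_a\cap\bigcup_{J_2}\mathbf M_a=\bigcup_{J_1\cap J_2}\mathbf M_a$, and the same holds for the reaction sets. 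By Definition and Proposition~\ref{defprop:orderideal}(i), $J_1\cup J_2$ and $J_1\cap J_2$ are again order ideals. Hence buffering structures are closed under both operations, and $J\mapsto\gamma_J$ is a lattice isomorphism onto a distributive lattice. The count follows because order ideals are subsets of a $p$-element set, so there are at most $2^p$ of them.

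For (ii), join-irreducibility is preserved under the lattice isomorphism, so it suffices to show that the nonempty join-irreducible order ideals are exactly the principal ones $\downarrow M_a$. This is the standard Birkhoff fact, and I will prove it directly. If $J$ is a nonempty ideal with at least two maximal elements $M_a\neq M_b$, I write $J=\bigl(J\setminus\{M_a\}\bigr)\cup\bigl(J\setminus\{M_b\}\bigr)$. Both pieces are proper ideals, since removing a maximal element keeps downward closure. Hence $J$ is reducible, so an irreducible $J$ has a unique maximal element and equals $\downarrow M_a$. Conversely, suppose $\downarrow M_a=J'\cup J''$. Then $M_a$ lies in one of them, say $J'$, and downward closure gives $J'\supseteq{\downarrow M_a}$, so $J'=\downarrow M_a$.

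For the minimality claim in (ii), let $\rho\in M_a$ and let $\gamma_J$ be any buffering structure whose reaction set contains $\rho$. Since the blocks are disjoint, $\rho\in\bigcup_{J}M_c$ forces $M_a\in J$. Then $J\supseteq{\downarrow M_a}$, and so $\gamma_J\supseteq\gamma_{\downarrow M_a}$ by order preservation. In particular $\gamma_J$ contains all of $M_a$.

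For (iii), a minimal nonempty buffering structure corresponds to a minimal nonempty order ideal. Any nonempty ideal contains a minimal element $M_a$, and $\{M_a\}$ is then itself an ideal. So the minimal nonempty ideals are exactly the singletons $\{M_a\}$ with $M_a$ minimal for $\preceq$, and $\gamma_{\{M_a\}}=(\mathbf M_a,M_a)=v_a$. Minimal elements are precisely the vertices with no incoming edge in the Hasse diagram $\mathcal D_{\CCF}(\bar L)$, i.e.\ its sources.

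I expect no real obstacle. The only care needed is in checking that intersections commute with the block unions, which relies on the disjointness of the $\mathbf M_a$ and $M_a$ and on the nonemptiness of the $M_a$. Nonemptiness matters because empty species blocks $\mathbf M_a$ do occur, so injectivity must come from the reaction component.
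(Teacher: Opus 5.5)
Your proposal is correct and follows the paper's proof: you transport everything through the isomorphism $J\mapsto\gamma_J$ of Thm.~\ref{Thm:BSideals}, use disjointness of the blocks to compute unions and intersections blockwise, identify the join-irreducibles with the principal ideals $\downarrow M_a$, and identify the minimal nonempty ideals with the singletons of minimal blocks. The only difference is that you prove the Birkhoff fact (nonempty join-irreducible order ideals of a finite poset are principal) directly, whereas the paper simply cites it as standard.
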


\begin{proof}
(i) By Prop.~\ref{defprop:orderideal}(i) the order ideals of $(\mathcal P,\preceq)$
form a distributive sublattice of~$2^{\mathcal P}$. Since the blocks $M_a$ are pairwise disjoint,
and likewise the sets $\mathbf M_a$, forming unions and intersections of order ideals corresponds
to forming componentwise unions and intersections of the associated subnetworks, that is
$\gamma_{J_1}\cup\gamma_{J_2}=\gamma_{J_1\cup J_2}$ and
$\gamma_{J_1}\cap\gamma_{J_2}=\gamma_{J_1\cap J_2}$. As $J\mapsto\gamma_J$ is an isomorphism of
partially ordered sets by Thm.~\ref{Thm:BSideals}, it is an isomorphism of lattices.

(ii) In the lattice of order ideals of a finite poset the nonempty join-irreducible elements are exactly
the principal ideals $\downarrow M_a$, showing the first part of (ii). Let $\rho$ be a reaction contained in a buffering structure $\gamma_J$, and let $M_a$ be the unique block containing $\rho$.
Since the blocks are pairwise disjoint, a reaction
$\rho\in M_a$ lies in the reaction set $\bigcup_{M_c\in J}M_c$ of $\gamma_J$ if and only if $M_a\in J$. Since $J$ is an order ideal this is equivalent to 
$\downarrow M_a\subseteq J$. By Thm.~\ref{Thm:BSideals} the smallest buffering structure whose
reaction set contains $\rho$ is thus $\gamma_{\downarrow M_a}$, and its reaction set contains all
of $M_a$.

(iii) The minimal nonempty order ideals are the singletons $\{M_a\}$ with $M_a$ minimal for
$\preceq$, and $\gamma_{\{M_a\}}=(\mathbf M_a,M_a)=v_a$. A block is minimal for $\preceq$ exactly
when it has no incoming edge in the Hasse diagram $\mathcal D_{\CCF}(\bar L)$, that is, when it is
a source.
\end{proof}

Part (i) of Cor.~\ref{Cor:BSlattice} is \cite[Cor.~1]{hirono2021structural} which is obtained here, without submodularity, directly from the identification of the buffering
structures with order ideals of $(\mathcal P,\preceq)$. The subnetwork
$\gamma_{\downarrow M_a}$ of part (ii) is the \emph{minimal influence block generated by a reaction} in
the sense of \cite[Eq.~(2.20)]{BF18}, and the family
$\{\gamma_{\downarrow M_1},\dots,\gamma_{\downarrow M_p}\}$ is the \emph{minimum set of buffering
structures} of \cite[Def.~4 and Thm.~6]{YHOM:24}. The observation that $\gamma_{\downarrow M_a}$ are exactly the
join-irreducible elements of the lattice, indexed by the blocks of the CCF, is what
Corollary~\ref{Cor:BSlattice} adds. 

\begin{example}\label{Ex:LatticeRunning}
We continue Example~\ref{Ex:IdealsRunning} and read off Cor.~\ref{Cor:BSlattice} for the
running example. Part~(i) was already illustrated in \eqref{Eq:RunningExIllustratePart1}.
For part~(ii), the three principal order ideals are $\downarrow M_1=\{M_1\}$,
$\downarrow M_2=\{M_1,M_2\}$ and $\downarrow M_3=\{M_3\}$, so the join-irreducible buffering
structures are
\[
\gamma_{\downarrow M_1}=\bigl(\{X_1\},\{1,2\}\bigr),\qquad
\gamma_{\downarrow M_2}=\bigl(\{X_1,X_2\},\{1,2,3,4\}\bigr),\qquad
\gamma_{\downarrow M_3}=\bigl(\{X_3\},\{5,6\}\bigr),
\]
The remaining nontrivial
buffering structure $\gamma_{\{M_1,M_3\}}=(\{X_1,X_3\},\{1,2,5,6\})$ is not join-irreducible: it
is the union of $\gamma_{\downarrow M_1}$ and $\gamma_{\downarrow M_3}$.

For part~(iii), the minimal blocks are $M_1$ and $M_3$, the two sources of
$\mathcal D_{\CCF}(\bar L)$, so the minimal nonempty buffering structures are
$v_1=(\{X_1\},\{1,2\})$ and $v_3=(\{X_3\},\{5,6\})$. The example shows that (ii) and (iii) are
genuinely different: $\gamma_{\downarrow M_2}$ is join-irreducible but not minimal, and the block
subnetwork $v_2=(\{X_2\},\{3,4\})$ is not a buffering structure at all, in accordance with
Example~\ref{Ex:SurplusRunning}, where we saw that $\{3,4\}$ is the reaction set of no buffering
structure.
\end{example}

We conclude this section by characterizing the reducibility of the symbolic Jacobian determinant in terms of buffering structure: the necessity part is a novel conclusion of this paper.

\begin{prop}\label{thm:q}
Assume the nondegeneracy condition~\eqref{eq:nondegcond}. Then the symbolic Jacobian determinant $\det(SR)$ is a reducible polynomial
in $\mathbb R[r_{\rho m}]$ if and only if the network admits a buffering structure
$(\mathbf M',\mathbf E')$ with $\emptyset\neq\mathbf M'\subsetneq\mathbf M$.
\end{prop}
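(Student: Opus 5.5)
The plan is to translate reducibility of $\det(SR)$ into a statement about the CCF of $A=[\,R\mid C\,]$, and then into order ideals via Thm.~\ref{Thm:BSideals}. Since $\det(SR)\neq 0$, $R$ is nonzero, so $A$ has nonempty symbolic part. By Prop.~\ref{Prop:CSGaledualTrick} and Thm.~\ref{thm:murotairr}, $\det(SR)$ is reducible if and only if $A$ is \emph{not} essentially LM-irreducible. In the CCF, a diagonal block $\bar L[\tau(M_a),\tau(M_a)]$ contains a variable if and only if $\mathbf M_a\neq\emptyset$: a species column in that block has a nonzero entry inside the block by (CCF3), and numeric columns carry no variables. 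Moreover, a block with $\mathbf M_a=\emptyset$ and $|M_a|\ge 2$ is impossible by Lemma~\ref{lem:symrow}. Hence the blocks are of two types: ``species blocks'' with $\mathbf M_a\neq\emptyset$, and numeric $1\times1$ blocks with $\mathbf M_a=\emptyset$. Consequently, $A$ is essentially LM-irreducible if and only if at most one index $a$ has $\mathbf M_a\neq\emptyset$. Because $\mathbf M\neq\emptyset$, this means exactly one such index. Therefore $\det(SR)$ is reducible if and only if there are two distinct blocks $a\neq b$ with $\mathbf M_a,\mathbf M_b\neq\emptyset$.

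For ($\Leftarrow$), suppose $\gamma=(\mathbf M',\mathbf E')$ is a buffering structure with $\emptyset\neq\mathbf M'\subsetneq\mathbf M$. By Thm.~\ref{Thm:BSideals}, $\gamma=\gamma_J$ for an order ideal $J$, so $\mathbf M'=\bigcup_{M_a\in J}\mathbf M_a$. Since $\mathbf M'\neq\emptyset$, some $a\in J$ has $\mathbf M_a\neq\emptyset$. Since $\mathbf M'\neq\mathbf M$ and the $\mathbf M_c$ partition $\mathbf M$, some $b\notin J$ has $\mathbf M_b\neq\emptyset$. This gives two species blocks, so $\det(SR)$ is reducible.

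For ($\Rightarrow$), suppose there are distinct $a,b$ with $\mathbf M_a,\mathbf M_b\neq\emptyset$. Then either $M_b\not\preceq M_a$ or $M_a\not\preceq M_b$; by symmetry, assume $M_b\not\preceq M_a$. Take the principal ideal $J=\downarrow M_a$. It contains $M_a$ but not $M_b$, so $\gamma_J$ is a buffering structure by Thm.~\ref{Thm:BSideals}, and its species set $\bigcup_{M_c\in J}\mathbf M_c$ contains $\mathbf M_a\neq\emptyset$. It also misses $\mathbf M_b\neq\emptyset$ by disjointness of the $\mathbf M_c$, so $\emptyset\neq\mathbf M'\subsetneq\mathbf M$.

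The main obstacle is the first step: showing that ``essentially LM-irreducible'' exactly means ``one block with $\mathbf M_a\ne\emptyset$''. This relies on Lemma~\ref{lem:symrow} to exclude numeric-only blocks of size at least $2$, and on (CCF3) to ensure every species column places a variable in its own diagonal block. Both need to be checked against the identification $\bar L=P_rAP_c$ fixed before \eqref{Eq:BlockSpecies}.
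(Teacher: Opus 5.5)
Your proposal is correct and follows essentially the same route as the paper: you reduce reducibility of $\det(SR)$ to the existence of two distinct blocks with $\mathbf M_a\neq\emptyset$ (via Lemma~\ref{lem:symrow} and (CCF3)), and then translate this into buffering structures through Thm.~\ref{Thm:BSideals}, using a principal ideal $\downarrow M_a$ for the forward direction. The only cosmetic difference is that you apply Thm.~\ref{thm:murotairr} once to $A$ and unpack the definition of essential LM-irreducibility, whereas the paper factors $\det(\bar L)$ into its diagonal-block determinants and applies Thm.~\ref{thm:murotairr} to each LM-irreducible block.
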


\begin{proof}
Let $A$ be the LM-matrix associated to $SR$ and $\bar L$ its combinatorial canonical form, as fixed at the beginning of this subsection.
Since $\bar L=P_r A P_c$, the permutations change the determinant only by a sign, so
Prop.~\ref{Prop:CSGaledualTrick} and (CCF1) give
\[
\det (SR)=\delta'\,\det(\bar L)=\delta'\prod_{a=1}^{p}\det(\bar L[\tau(M_a),\tau(M_a)])
\qquad\text{for some }\delta'\in\mathbb R\setminus\{0\}.
\]
By~\eqref{Eq:BlockSpecies}, the species columns lying in the block
$\tau(M_a)$ are exactly those of $\mathbf M_a$. Hence, if $\mathbf M_a=\emptyset$, the block
carries no variable, so $|M_a|=1$ by Lemma~\ref{lem:symrow} and the corresponding factor is a
nonzero element of $\mathbb R$ by (CCF3). If $\mathbf M_a\neq\emptyset$, the block is a
nonsingular LM-irreducible LM-matrix with nonempty symbolic part, so its determinant is
irreducible by Thm.~\ref{thm:murotairr}. Since a product of one nonconstant irreducible
polynomial with units is irreducible, $\det(SR)$ is reducible if and only if at least two blocks
satisfy $\mathbf M_a\neq\emptyset$.

It remains to see that this happens exactly when the network has a buffering structure with
$\emptyset\neq\mathbf M'\subsetneq\mathbf M$. Suppose $\mathbf M_{b_1},\mathbf M_{b_2}\neq\emptyset$
with $b_1\neq b_2$. If $M_{b_2}\not\preceq M_{b_1}$, put $J:=\downarrow M_{b_1}$; otherwise
$M_{b_2}\prec M_{b_1}$ and we put $J:=\downarrow M_{b_2}$, which does not contain $M_{b_1}$. In
either case $J$ is an order ideal containing exactly one of $M_{b_1},M_{b_2}$, so by
Thm.~\ref{Thm:BSideals} the species set $\bigcup_{M_a\in J}\mathbf M_a$ of $\gamma_J$ is nonempty
and misses the species of the other block. Conversely, if $\mathbf M_a\neq\emptyset$ for exactly
one index $a$, then $\bigcup_{M_c\in J}\mathbf M_c$ is either empty or all of $\mathbf M$ for every
order ideal $J$, so by Thm.~\ref{Thm:BSideals} no buffering structure has
$\emptyset\neq\mathbf M'\subsetneq\mathbf M$.
\end{proof}

\begin{example}
To see that the condition $\mathbf{M}'\subsetneq \mathbf{M}$ is needed, consider the following example with one species and two reactions:
$
X_1 \longrightarrow 2X_1, \; 0 \longrightarrow X_1.
$
The stoichiometric and symbolic reactivity matrices are, respectively, $S = (1,1)$, $R = (r_{11} ,0)^T$ and \[A = \begin{pmatrix}
    r_{11} & 1 \\ 0 & -1
\end{pmatrix},\]
with irreducible determinant $\det(A)=-r_{11}$. The subnetwork
$(\{X_1\},\{1\})$ is output-complete and satisfies $\lambda(\{X_1\},\{1\})=1-1-0=0$, so it is a
nontrivial buffering structure, but its species set is all of $\mathbf M$, so the condition
$\emptyset\neq\mathbf M'\subsetneq\mathbf M$ of Prop.~\ref{thm:q} fails.
\end{example}

\subsection{Sensitivity analysis}\label{sec:sensitivity}

Sensitivity analysis studies the response of the system to external perturbations. For simplicity of presentation, we consider linear perturbations: let $\boldsymbol{\alpha} \in \mathbb{R}^{|\mathbf{E}|}_{\ge0}$ be a nonnegative \emph{perturbation vector}, and let $\mathbf{r}(\mathbf{x})$ be a kinetic model as in Def.~\ref{def:kineticmodel}. The \emph{perturbed kinetic model} is defined as
\begin{equation*}\label{eq:reactionperturbation}
\mathbf{r}^{\varepsilon}(\mathbf{x}):=(1+\varepsilon \boldsymbol{\alpha})  \ast \mathbf{r}(\mathbf{x}), 
\end{equation*}
where $\ast$ denotes the coordinatewise product of two vectors.

Let $\mathbf{\bar x} \in \mathbb{R}_{>0}^{\vert \mathbf{M} \vert}$ be a \emph{nondegenerate}
equilibrium of the ODE system associated with the kinetic model $\mathbf{r}(\mathbf{x})$ as in~\eqref{eq:main}. Here, nondegenerate means that the Jacobian matrix $S\Jac_{\mathbf{r}}(\mathbf{\bar x})$ is nonsingular.
We address how the equilibrium concentration vector $\mathbf{\bar x}$ responds to perturbations of the reaction rate function.
Consider the perturbed equilibrium equation
\begin{equation}\label{eq:perturbedss}
0=f^\varepsilon(\mathbf{x})
=S\mathbf{r}^\varepsilon(\mathbf{x}).
\end{equation}
By the Implicit Function Theorem, there exists a differentiable function $\mathbf{\bar x}(\varepsilon) \colon (-\varepsilon_*,\varepsilon_* ) \to\mathbb{R}_{>0}^{\vert \mathbf{M} \vert} $ such that $f^{\varepsilon}( \mathbf{\bar x}(\varepsilon)) = 0$ for all $\varepsilon \in (-\varepsilon_*,\varepsilon_* ) $ and $\mathbf{\bar x}(0) = \mathbf{\bar x}$.
We call 
\begin{equation}\label{eq:concresp}
    \delta \mathbf{\bar x}^{\boldsymbol{\alpha}}\coloneqq\dfrac{\partial\bar{\mathbf{x}}(\varepsilon)}{\partial \varepsilon}\bigg|_{\varepsilon=0\;}
\end{equation}
the \emph{concentration response} to an $\boldsymbol{\alpha}$-perturbation.

The concentration response vector can be written in terms of $\boldsymbol{\alpha}$ and the inverse of the $A$ matrix from~\eqref{Eq:AMatrix} as follows. Differentiating the identically zero function $f^{\varepsilon}( \mathbf{\bar x}(\varepsilon)) $ gives
\begin{equation}\label{eq:diffss}
    0=\dfrac{\partial f^\varepsilon(\mathbf{\bar x}(\varepsilon))}{\partial \varepsilon}=S \dfrac{\partial \mathbf{r^\varepsilon(\bar{\mathbf{x}}(\varepsilon))}}  {\partial \varepsilon} =
    S \Big( \boldsymbol{\alpha } \ast \mathbf{r}(\mathbf{\bar x}(\varepsilon)) + (1+\varepsilon\boldsymbol{\alpha}) \ast \big( \Jac_\mathbf{r}(\mathbf{\bar x}(\varepsilon)) \cdot \tfrac{\partial \mathbf{\bar x}(\varepsilon)}{\partial \varepsilon}\big)\Big).
\end{equation}
In the symbolic spirit of this paper, we replace $\Jac_\mathbf{r}(\mathbf{\bar x}(\varepsilon))$ by the symbolic reactivity matrix $R$ as in~\eqref{Eq:SymbReactivity}.
Evaluating~\eqref{eq:diffss} at $\varepsilon=0$ gives that $\boldsymbol{\alpha} \ast \mathbf{r}(\mathbf{\bar x}(0)) + R\delta \mathbf{\bar x}^{\boldsymbol{\alpha}}$ lies in the kernel of $S$. 
Thus for any choice $\{c^1,...,c^K\}$ of a basis of $\ker(S)$ there exist $\mu_1^{\boldsymbol{\alpha}}, \dots ,\mu_K^{\boldsymbol{\alpha}} \in \mathbb{R}(r_{\rho m})$
such that
\begin{equation*}\label{eq:introA}
      \boldsymbol{\alpha}\ast \mathbf{r}(\mathbf{\bar x}(0)) + R \cdot \delta \mathbf{\bar x}^{\boldsymbol{\alpha}}= - \sum_{i=1}^K \mu_i^{\boldsymbol{\alpha}} c^i ,
\end{equation*}
where the minus convention is due to consistency of notation with the $A$ matrix from \eqref{Eq:AMatrix}. 
If $A = [\,R\, \vert\, C\,]$ and the columns of $C$ are given by the basis vectors $c^1,\dots,c^K$, we have
\begin{equation*}
    A \begin{pmatrix}
        \delta \mathbf{\bar x}^{\boldsymbol{\alpha}} \\ \mu^{\boldsymbol{\alpha}}    \end{pmatrix}=-\boldsymbol{\alpha} \ast \mathbf{r}(\mathbf{\bar x}(0)) 
\end{equation*}
and therefore the concentration response is structurally encoded in the inverse of the $A$ matrix:
\begin{equation}\label{eq:responsesformula}
    \begin{pmatrix}
        \delta \mathbf{\bar x}^{\boldsymbol{\alpha}}\\ \mu^{\boldsymbol{\alpha}}    \end{pmatrix}=-\;A^{-1}\,(\boldsymbol{\alpha} \ast \mathbf{r}(\mathbf{\bar x}(0)) ).
\end{equation}

\subsection{Targeted perturbations and influence-relations} \label{sec:targeted}
The linearity of the responses in~\eqref{eq:responsesformula} suggests to consider \emph{targeted perturbations} of the form $\boldsymbol{\alpha}=\mathbf{e}_{\rho}$ for a uniquely perturbed reaction~$\rho$, since the responses to such perturbations span the responses to any general vectorial perturbation~$\boldsymbol{\alpha}$.
Moreover, after normalization, the concentration response vector of a targeted perturbation of reaction $\rho$ can be read off in the first $\mathbf{M}$ entries in the $\rho^{th}$ column of $-A^{-1}$, by~\eqref{eq:responsesformula}.
Consequently, we say that a \emph{reaction $\rho$ influences a species $X_m$} if  for $\boldsymbol{\alpha} = \mathbf{e}_\rho$ we have $(\delta \mathbf{\bar x}^{\boldsymbol{\alpha}})_m \neq 0$, or equivalently if $(A^{-1})_{m \rho} \neq 0$. Here, again, $(\delta \mathbf{\bar x}^{\boldsymbol{\alpha}})_m \neq 0$, $(A_{m\rho})^{-1}\neq 0$ mean nonzero as a symbolic rational expressions in $\mathbb{R}(r_{\rho m})$. Thus, such  structural influence need not imply an actual nonzero response at every specialization of the variables $r_{\rho m}$.

The goal of this section is to introduce a graph, derived from the combinatorial canonical form
of $A$, that completely describes the influence relations between reactions and species. We keep
the notation of Sec.~\ref{sec:bs-lattice}: $\bar L=P_r A P_c$ is the fixed combinatorial
canonical form of $A$, the permutations $\tau,\sigma$ are those fixed in Sec.~\ref{sec:bs-lattice}, the row/column
partition of $A$ is $\mathbf E=M_1\sqcup\dots\sqcup M_p$ with associated species sets
$\mathbf M_1,\dots,\mathbf M_p$ as in~\eqref{Eq:BlockSpecies}, and $v_a=(\mathbf M_a,M_a)$ are
the block subnetworks of~\eqref{Eq:BlockData}. Recall that the row/column partition of $\bar L$
is $\tau(M_1),\dots,\tau(M_p)$, that $M_a\preceq M_b$ if and only if
$\tau(M_a)\preceq\tau(M_b)$, and that $\mathbf M_a=\{X_m\in\mathbf M\mid\sigma(m)\in\tau(M_a)\}$
by~\eqref{Eq:BlockSpecies}. From
\[
\bar L^{-1} = P_c^{-1} A^{-1} P_r^{-1}, \qquad\text{or equivalently from }\qquad A^{-1} = P_c \bar L^{-1} P_r ,
\]
we obtain $(A^{-1})_{m \rho} = (\bar L^{-1})_{\sigma(m)\,\tau(\rho)}$, so an entry of $A^{-1}$ is
nonzero precisely when the corresponding entry of $\bar L^{-1}$ is.

We define the \emph{influence graph} $\mathcal G$ of $\mathcal N$ as the directed graph with
vertex set $\{v_1,\dots,v_p\}$, where there is a directed edge from $v_b$ to $v_a$ if and only if
there is a directed edge from $M_a$ to $M_b$ in the CCF directed graph
$\mathcal D_{\CCF}(\bar L)$. Thus the edges of $\mathcal G$ carry the orientation opposite to
those of $\mathcal D_{\CCF}(\bar L)$, which aligns our definition with the reaction network
literature.

\begin{theorem}[Theorem \ref{theorem:B}]\label{thm:influencegraph}
Assume that the symbolic Jacobian determinant $\det(SR)$ of a reaction network $\mathcal{N} = (\mathbf{M},\mathbf{E})$ is not zero, and let $\mathcal G$ be its influence graph.
A reaction $\rho \in \mathbf{E}$ influences a species $X_m \in \mathbf{M}$ if and only if there is a directed path from $v_b$ to $v_a$ in the influence graph, where $a,b\in[p]$  are the indices with $X_m\in\mathbf M_a$ and $\rho\in M_b$.
\end{theorem}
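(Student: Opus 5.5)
The plan is to reduce the statement directly to Thm.~\ref{thm:ccfinv} by translating between the entries of $A^{-1}$ and the blocks of $\bar L^{-1}$. By the definition in this subsection, $\rho$ influences $X_m$ if and only if $(A^{-1})_{m\rho}\neq 0$. We already noted that $(A^{-1})_{m\rho}=(\bar L^{-1})_{\sigma(m)\,\tau(\rho)}$. So it suffices to decide when this entry of $\bar L^{-1}$ is nonzero.

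First we locate the entry in the block structure of $\bar L$. Since $X_m\in\mathbf M_a$, definition~\eqref{Eq:BlockSpecies} gives $\sigma(m)\in\tau(M_a)$. Since $\rho\in M_b$, we have $\tau(\rho)\in\tau(M_b)$. Hence the entry lies in the block $\bar L^{-1}[\tau(M_a),\tau(M_b)]$.

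Now $\bar L$ is a nonsingular LM-matrix in CCF. It is nonsingular because $\det(SR)\neq 0$ and, by Prop.~\ref{Prop:CSGaledualTrick}, $\det A$ is a nonzero multiple of $\det(SR)$. Its row/column partition is $\tau(M_1),\dots,\tau(M_p)$. Thm.~\ref{thm:ccfinv} then says this block is fully dense if $\tau(M_a)\preceq\tau(M_b)$ and identically zero otherwise. There is no partial case, so the single entry is nonzero if and only if $\tau(M_a)\preceq\tau(M_b)$.

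It remains to translate the order relation into paths in $\mathcal G$:
\begin{itemize}
\item $\tau(M_a)\preceq\tau(M_b)$ holds if and only if $M_a\preceq M_b$, as recorded above.
\item Since $\mathcal D_{\CCF}(\bar L)$ is the Hasse diagram of $\preceq$, $M_a\preceq M_b$ holds if and only if there is a directed path $M_a\leadsto M_b$, possibly of length zero when $a=b$.
\item Every edge of $\mathcal G$ is an edge of $\mathcal D_{\CCF}(\bar L)$ reversed, with vertices renamed $M_c\mapsto v_c$. Reversing each edge along the path shows that a path $M_a\leadsto M_b$ exists if and only if there is a directed path from $v_b$ to $v_a$ in $\mathcal G$.
\end{itemize}
Chaining these equivalences proves the theorem.

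The only delicate point is bookkeeping rather than mathematics: we must make sure the species index $m$ sits in a \emph{column} block and the reaction $\rho$ in a \emph{row} block, with matching indices $a,b$. This requires that the chain used to number the blocks is the one fixed in Sec.~\ref{sec:bs-orderideals}, so that the $\tau(M_c)$ are exactly the CCF blocks in display order. Once this is checked, all the substantive work is done by Thm.~\ref{thm:ccfinv}.
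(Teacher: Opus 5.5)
Your proposal is correct and follows the paper's proof: you rewrite influence as $(\bar L^{-1})_{\sigma(m)\,\tau(\rho)}\neq 0$, locate this entry in the block $\bar L^{-1}[\tau(M_a),\tau(M_b)]$, apply Thm.~\ref{thm:ccfinv}, pass from $\tau(M_a)\preceq\tau(M_b)$ to $M_a\preceq M_b$, and reverse the path to get one in $\mathcal G$. Your explicit check that $\bar L$ is nonsingular via Prop.~\ref{Prop:CSGaledualTrick}, and your remark on the block numbering, are small details that the paper leaves implicit or handles beforehand.
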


\begin{proof}
Let $a,b\in[p]$ be the indices such that $X_m \in \mathbf M_a$ and $\rho \in M_b.$ This means that $\sigma(m)\in\tau(M_a)$, and $\tau(\rho)\in\tau(M_b)$. 
The reaction $\rho$ influences $m$ if and only if 
\[(A^{-1})_{m \rho} = (\bar L^{-1})_{\sigma(m) \tau(\rho)} \neq   0.\]
By Thm.~\ref{thm:ccfinv}, this is equivalent to $\tau(M_a)\preceq\tau(M_b)$, hence to $M_a \preceq M_b$, which in turn is equivalent to the existence of a directed path from $v_b$ to $v_a$ in $\mathcal G$.
\end{proof}

Thm.~\ref{thm:influencegraph} provides a complete characterization of which reactions influence which species concentrations. We note that this information (that is, the influence graph) can be computed symbolically without numerical error in polynomial time using the algorithm of~\cite{murota_algo}. We refer to Sec.~\ref{Sec:Computation} for more details on computational aspects.

Results related to Thm.~\ref{thm:influencegraph} have appeared in the literature.
Brehm and Fiedler \cite{BF18} gave exact formulas for the sensitivity responses  
and established `transitivity of influence' via the mathematical argument of Lemma~\ref{lem:transition}. Transitivity guarantees that if a reaction $\rho_1$ influences a species $X_j$, which is a reactant of a reaction $\rho_2$ that influences a species $X_k$, then $\rho_1$ influences $X_k$. A posteriori (after numerically computing $A^{-1}$ via probabilistic method of polynomial identity testing \cite{schwartz:1980}), they construct an influence graph via the preorder induced by transitivity.
The authors in \cite{BF18} regard their influence graph as related indeed to Murota's theory, although they do not formally pursue this connection.

Yamauchi, Hishida, Okada, and Mochizuki \cite{YHOM:24} subsequently obtained a closely related hierarchy directly from buffering structures. They associate to each reaction $\rho$ the minimal buffering structure containing $\rho$ and organize these structures by inclusion, obtaining a `hierarchy graph'. The construction crucially relies on the transitivity theorem \cite{BF18}, and they obtain the hierarchy graph numerically, which also requires computing $A^{-1}$.
The authors further prove that this structural hierarchy coincides with a hierarchy of nonzero responses: influence between the resulting disjoint hierarchy components is `all or none', so that every parameter in an upstream component influences every species in each downstream component. Thus, reachability in the hierarchy graph gives a genuine characterization of metabolite influence.

Huang, Okada, and Mochizuki \cite{HOM:25} also introduced another `influence graph' based on the lattice of buffering structures. Their results show that nonzero influence can occur only if two elements are connected by reachability in their influence graph, although the sufficiency of this construction, even for metabolite responses, is not explicitly stated there. The approaches of \cite{HOM:25} and of  \cite{YHOM:24} appear closely related, though their equivalence has not been formally established.  In particular, both approaches partially extend to conserved quantities; however, the all-to-all hierarchy statement in \cite{YHOM:24} relies on \cite{BF18}, which assumes no conserved quantities.

\section{Computations}
\label{Sec:Computation}
In this section, we briefly present our proof-of-concept \texttt{Julia} implementation for computing the combinatorial canonical form of an LM-matrix and the influence graph of a reaction network. We also discuss how the resulting combinatorial information can be used to compute the symbolic Jacobian determinant. Our implementation of the CCF algorithm is based on the algorithm presented in~\cite{murota_algo} which runs in polynomial time. The code is available at
\begin{center}
\url{https://github.com/ArneKuhrs/LayeredMixedMatrices.jl}\,.
\end{center}
To showcase how to use our code, we consider the \emph{nominal cell model} from~\cite{Schliemann2011}. This network has $47$ species and $106$ reactions. The list of all reactions can be found in  \texttt{ODEbase}~\cite{LuedersSturmRadulescu:22}. Using the package \texttt{Catalyst.jl}~\cite{Catalyst}, one can easily enter the reactions~as
\begin{alltt}
rn = @reaction_network begin 
    k1, TNFR --> TNFR_E
    k2, 0 --> TNFR
    \(\vdots\)
    k87, PARP --> cPARP
    (k88,l88), BAR + Casp8 <--> BAR_Casp8
end
\end{alltt}
After computing the reactivity matrix \texttt{R} and a Gale dual \texttt{C} of the stoichiometric matrix,
the function \texttt{ccf(R,C)} returns the combinatorial canonical form. It is worth noting that the computation takes only $0.0008684$ seconds\footnote{The computations were performed on a MacBook Air with an M4 chip and 16 GB RAM. The time is an
average of five runs.} for the nominal cell model.
In this example, the CCF has $50$ irreducible blocks: $41$ of size one, $3$ of size two, $2$ of size five, and one each of sizes $3, 7$, $13$, and $26$. The CCF-directed graph and equivalently the influence graph, which can be computed via the function \texttt{block\_poset(ccf\_res)}, has $39$ edges between the $50$ vertices.

Due to the size of the network, a direct computation of the symbolic Jacobian determinant is infeasible. We discuss how the combinatorial canonical form can be used to facilitate this computation.
Since the CCF consists of 50 irreducible blocks, the Jacobian determinant can have at most $50$ irreducible factors. In fact, only $26$ of these blocks contain symbolic variables, giving $26$ factors of the determinant. The following table summarizes the sizes of these blocks (omitting the ones of size $1$), and the size of the corresponding factors of the determinant.
\[
\begin{array}{c|ccccccccc}
\text{size of the block}
& 2 & 2 & 2 & 3 & 5 & 5 & 7 & 13 & 26 \\
\hline
\hline
\text{number of variables}
& 2 & 2 & 2 & 4 & 5 & 6 & 8 & 15 & 36 \\
\hline
\text{number of monomials}
& 2 & 2 & 2 & 3 & 4 & 4 & 11 & 114 & 57341
\end{array}
\]
Thus, for the nominal cell model, the Jacobian determinant is a polynomial with $27{.}611{.}755{.}776$ monomials, which can nevertheless be computed (in factored form) using the CCF.

\section{Discussion}\label{sec:discussion}

In this paper, we investigated layered mixed matrices from both an algebraic and a reaction network perspective, with the main goal of exposing direct connections between
the two.

On the algebraic side, we have related LM-matrices to linearly transformed sparse generic matrices $ST$, a class of column-graded matrices naturally connected to LM-matrices through Gale duality. This connection yields a combinatorial characterization of the irreducibility of $\det(ST)$. 
We have also contributed to the structural theory of LM-matrices by characterizing in Thm.~\ref{thm:ccfinv} the nonzero pattern of the inverse of an LM-matrix in CCF entirely in terms of the associated block partial order.

These algebraic results serve naturally reaction network theory, where matrices of the form $ST$ arise as Jacobian matrices.
Thus, the factorization of the Jacobian determinant can  be characterized in terms of the associated LM-matrix. We have further connected LM-matrix theory to sensitivity analysis, recovering the lattice of buffering structures as the lattice of order
ideals of the CCF block poset. In this way, our results provide reaction network theory with a new language and a broad set of tools from LM-matrix theory, while reaction networks provide LM-matrix theory with a relevant application that naturally poses new mathematical questions.

A central example of this interplay is again Thm.~\ref{thm:ccfinv}, which characterizes the nonzero entries of the inverse of an LM-matrix in CCF. Such a result does not appear in the original LM-matrix theory, where the emphasis was on properties of arbitrary LM-matrices rather than specifically on their CCF. For reaction networks, however, the matrix of sensitivities of species concentrations to reaction-rate perturbations can be identified with the inverse of a matrix in CCF, making this question directly relevant. Thm.~\ref{thm:influencegraph}, which is simply the reaction network interpretation of Thm.~\ref{thm:ccfinv}, yields therefore a characterization of the structurally nonzero sensitivity responses. 

As a first case of interest, we have focused here on invertible LM-matrices and, consequently, on reaction networks whose Jacobian
determinant is not identically zero. This excludes networks with linear conserved quantities, which are however common in applications. Since
sensitivity results are already available in this more general setting, an important direction for future work is to understand how the tools
of LM-matrix theory can be extended to reaction networks with conserved quantities.\\

\textbf{Acknowledgments.} We thank Atsushi Mochizuki and Takashi Okada for their generous support and encouragement, and in particular  for clarifying details in the literature on buffering structures. 
M.L.T. and A.K. are grateful to Bernd Sturmfels for bringing the literature on sparse generic matrices to their attention.
N.V. is indebted to Bernold Fiedler and Bernhard Brehm for opening the gates to sensitivity theory. M.L.T. is supported by the project 
101183111-DSYREKI-HORIZON-MSCA2023-SE-01 
"Dynamical Systems and Reaction Kinetics Networks"; N.V. is supported by the Novo Nordisk
Foundation (grant NNF21OC0066551 `MATOMIC').

\bibliographystyle{amsalpha}
\bibliography{biblio}{}

\end{document}